\newtheorem{thm}{Theorem}[section]
\newtheorem{prop}{Proposition}[section]
\newtheorem{lem}{Lemma}[section]
\newtheorem{cor}{Corollary}[section]
\newtheorem{rmq}{Remark}[section]
\numberwithin{equation}{section}
\newcounter{exercice}
\DeclareMathOperator{\im}{Im}
\DeclareMathOperator{\supp}{supp}
\newcommand{\field}[1]{\mathbb{#1}}
\newcommand{\N}{\field{N}}          		
\newcommand{\Z}{\field{Z}}          		
\newcommand{\R}{\field{R}}          		
\newcommand{\Sph}{\field{S}}        		
\newcommand{\C}{\field{C}}          		
\newcommand{\Hy}{\field{H}}
\DeclareMathOperator\sgn{sgn}
\newcommand{\one}{\mathbbm{1}}
\begin{document}
\title[4NLS]{Fourth order Schr\" odinger equation with mixed dispersion on certain Cartan-Hadamard manifolds}
\author[Jean-Baptiste Casteras]{Jean-Baptiste Casteras}
\address{CMAFCIO, Faculdade de Ci\^encias da Universidade de Lisboa, Edificio C6, Piso 1, Campo Grande 1749-016 Lisboa, Portugal}
\email{jeanbaptiste.casteras@gmail.com}
\author[Ilkka Holopainen]{Ilkka Holopainen}
\address{Department of Mathematics and Statistics, P.O. Box 68, 00014 University of
Helsinki, Finland}
\email{ilkka.holopainen@helsinki.fi}
\thanks{J.-B.C. supported by FCT - Funda\c c\~ao para a Ci\^encia e a Tecnologia, under the project: UIDB/04561/2020; I.H. supported by the Magnus Ehrnrooth foundation}
\subjclass[2000]{35Q55, 35P25, 35J30, 43A85, 43A90}
\keywords{Nonlinear fourth-order Schr\"odinger equation, scattering, dispersive inequality, Strichartz estimate, hyperbolic space}

\begin{abstract}
This paper is devoted to the study of the following fourth order Schr\" odinger equation with mixed dispersion on $M^N$, an $N$-dimensional Cartan-Hadamard manifold. Namely we consider
\begin{equation}
\label{abstracteq}\tag{4NLS}
\begin{cases}
i\partial_t\psi = -\Delta_M^2\psi +\beta\Delta_M \psi +\lambda|\psi|^{2\sigma}\psi \quad\text{in }\R\times M,\\
\psi(0,\cdot)=\psi_0\in X,
\end{cases}
\end{equation}
where $\beta \geq 0$, $\lambda=\{-1,1\}$, $0<\sigma < 
4/(N-4)_+$, $\Delta_M$ is the Laplace-Beltrami operator on $M$ and $X=L^2 (M)$ or $X=H^2 (M)$. At first,  we focus on the case where $M$ is the hyperbolic space $\Hy^N$. Using the fact that there exists a Fourier transform on this space, we prove the existence of a global solution to \eqref{abstracteq} as well as scattering for small initial data provided that $N\ge 4$ and $0<\sigma < 4/N$ if $X=L^2 (\Hy^N)$ or $0<\sigma < 4/(N-4)_+$ if $X=H^2 (\Hy^N)$. Next, we obtained weighted Strichartz estimates for radial solutions to \eqref{abstracteq} on a large class of rotationally symmetric manifolds by adapting the method of Banica and Duyckaerts (Dyn. Partial Differ. Equ., 07).
Finally,
we give a blow-up result for a rotationally symmetric manifold relying on a localized virial argument.  
\end{abstract}

\maketitle
\section{Introduction}
We consider the following biharmonic (i.e. fourth-order) nonlinear Schr\"odinger equation with mixed dispersion on $M^N$, an $N$-dimensional Cartan-Hadamard manifold,
\begin{equation}\label{4NLS}
\begin{cases}
i\partial_t\psi =- \Delta_M^2 \psi +\beta\Delta_M \psi -\lambda |\psi|^{2\sigma}\psi \quad\text{in }\R\times M,\\
\psi(0,\cdot)=\psi_0\in X,
\end{cases}
\end{equation}
where $\beta \geq 0$, 
\[
0<\sigma <\dfrac{4}{(N-4)_+}=\begin{cases}\dfrac{4}{N-4},&\mbox{if }\  N>4,\\ \infty,&\mbox{if }\   N\leq 4, \end{cases},
\] 
 $\Delta_M$ is the Laplace-Beltrami operator on $M$ and $X=L^2 (M)$ or $X=H^2 (M)$. When $M=\R^N$, Karpman and Shagalov \cite{MR1779828} introduced the fourth order term to regularize and stabilize the classical Schr\" odinger equation. We refer to \cite{MR1898529,MR4001029,MR3855391} for more details concerning the stability of standing wave solutions to this equation. 
 Our main focus in this paper will be to obtain well-posedness results for \eqref{4NLS} on Cartan-Hadamard manifolds. Before proceeding, let us recall some results in the Euclidean space. The global existence of solution to \eqref{4NLS} was proved by Pausader \cite{MR2505703,MR2502523}. To do so, he used the dispersive estimates of Ben-Artzi, Koch and Saut \cite{MR1745182}. More precisely, let 
$$I_\beta (t,x)= \int_{\R^N} e^{it (|\xi|^4 +\beta |\xi|^2) -i \langle x ,\xi\rangle} d\xi, $$
be the propagator of our operator in $\R^N$. Then there holds:
\begin{itemize}
\item If $\beta=0$,
$$|D^\alpha I_0 (t,x)|\leq C t^{-\frac{N+|\alpha|}{4}} \big(1+ t^{-\frac{1}{4}}|x| \big)^{\frac{|\alpha|-N}{3}},$$
for all $t>0$ and $x\in \R^N$.
\item For all $0<t\leq 1$ and all $x\in \R^N$ or for all $t>0$ and all $|x|\geq t$,
$$|D^\alpha I_\beta (t,x)| \leq  C t^{-\frac{N+|\alpha|}{4}} \big(1+ t^{-\frac{1}{4}}|x| \big)^{\frac{|\alpha|-N}{3}} , $$
\item For $\beta>0$,  all $t\geq 1$ and $|x|\leq t$,
$$|D^\alpha I_{\beta} (t,x)| \leq C t^{-\frac{N+|\alpha|}{2}} \big(1+ t^{-\frac{1}{2}}|x| \big)^{|\alpha|} .$$
\end{itemize}

Thanks to these estimates, Pausader was able to obtain the following Strichartz estimates. Before stating them, let us introduce some notation. We say that a pair $(q,r)$ is S-admissible if $2\leq q,r\leq \infty$, $(q,r,N)\neq (2,\infty , 2)$, and
$$\dfrac{2}{q}+ \dfrac{N}{r}=\dfrac{N}{2}.$$
We say that a pair $(q,r)$ is B-admissible if $2\leq q,r\leq \infty$, $(q,r,N)\neq (2,\infty , 4)$, and
$$\dfrac{4}{q}+ \dfrac{N}{r}=\dfrac{N}{2}.$$
Let $I=[0,T]$ and $\psi\in C(I,H^{4})$ be a solution to 
\begin{equation}
\label{4nlsreal}
\begin{cases}
i\partial_t\psi =- \Delta_{\R^N}^2\psi +\beta\Delta_{\R^N}\psi -h\quad\text{in }I\times \R^N,\\
\psi(0,\cdot)=\psi_0\in L^2 (\R^N),
\end{cases}
\end{equation}
for some suitable function $h$ smooth enough. Assume in addition that $T\leq 1$ if $\beta <0$. Then, for any $B$-admissible pairs $(q,r)$ and $(\tilde{q} ,\tilde{r})$, we have
\begin{equation}
\label{strireal}
\|\psi\|_{L^q (I,L^r)} \leq C \big(\|\psi_0\|_{L^2} + \|h\|_{L^{\tilde{q}^\prime } (I,L^{\tilde{r}^\prime })}\big),
\end{equation}
whenever the right hand side is finite, where $\tilde{q}^\prime$ and $\tilde{r}^\prime$ are the conjugate exponents of $\tilde{q}$ and $\tilde{r}$. We also have, for $S$-admissible pairs $(q,r)$, $(\tilde{q} ,\tilde{r})$, and any $s\geq 0$,
$$\||\nabla|^s \psi\|_{L^q (I,L^r)} \leq C \big(\||\nabla |^{s-\frac{2}{q}}\psi_0\|_{L^2} + \||\nabla|^{s-\frac{2}{q} - \frac{2}{\tilde{q}}}h\|_{L^{\tilde{q}^\prime } (I,L^{\tilde{r}^\prime })}\big),$$
whenever the right side is finite. Now, let us take $h=\lambda |\psi|^{2\sigma}\psi$ and $\psi_0 \in \R^N$ in \eqref{4nlsreal}, with $\lambda=\{-1,1\}$ and $\sigma <4/(N-4)_+ $. 
 Then, using the conservation of the mass
\[ M[\psi (t)]:=\int_{\R^N}|\psi (t)|^2 \,dx=M[\psi_0],
\]
and the conservation of the energy
\[
E[\psi (t)] :=\frac{1}{2}\int_{\R^N}|\Delta_{\R^N} \psi (t)|^2\, dx + \frac{\beta}{2}\int_{\R^N}|\nabla\psi (t)|^2\, dx
-\frac{\lambda}{2\sigma +2}\int_{\R^N}|\psi (t)|^{2\sigma+2}dt= E[\psi_0],
\]
Pausader \cite{MR2505703} showed that there exists a global solution in the following cases:
\begin{itemize}
\item $\lambda \geq 0$;
\item $\lambda <0$ and $\sigma< 4/N$;
\item $\lambda <0$, $\sigma=4/N$ and $\psi_0$ is sufficiently small in $L^2$;
\item $\lambda <0$, $\sigma \leq 4/(N-4) $ and $\psi_0$ is sufficiently small in $H^2$.
\end{itemize}
He also proved scattering in $H^2 (\R^N)$ provided that $\lambda \geq 0$, $N\geq 5$, $\beta \geq 0$ and $4/N < \sigma < 4/(N-4)$. Let us also point out that Pausader also obtained some results when $\sigma= 4/(N-4)$ but we will not consider this case.

In view of these results, it seems quite natural to investigate in which spaces the Strichartz estimates hold. For the classical Schr\" odinger equation i.e. for $i\partial_t \psi =-\Delta_M \psi +h $, $\psi(0)=\psi_0\in L^2 (M)$, where $M$ is some manifold, this question has been intensively studied in recent decades. It has been proved that the Strichartz estimates holds for a much larger class of pairs $(q,r)$, $(\tilde{q} ,\tilde{r})$, in the hyperbolic space or more generally in Damek-Ricci spaces than in the Euclidean one see \cite{MR2765426,MR2566713,banica2007,BaCaSt,MR2245889}. More precisely, for any S-admissible pair $(q,r)$ and $(\tilde{q} ,\tilde{r})$, we have
$$\|\psi\|_{L^q (I,L^r (\R^N))} \leq C \big(\|\psi_0\|_{L^2 (\R^N)} + \|h\|_{L^{\tilde{q}^\prime } (I,L^{\tilde{r}^\prime } (\R^N))}\big),$$
whereas in $\Hy^N$, the same inequality holds for pairs $(p,q)$ and $(\tilde{p} ,\tilde{q})$ belonging to the triangle
$$\left\lbrace \left(\dfrac{1}{p},\dfrac{1}{q}\right) \in (0,\dfrac{1}{2}] \times (0,\dfrac{1}{2} ) \colon \dfrac{2}{p}+ \dfrac{N}{q}\geq\dfrac{N}{2}\right\rbrace \cup \left\lbrace\left(0,\dfrac{1}{2}\right)\right\rbrace.$$
Intuitively, on a non-compact negatively curved manifold, one can expect to have better Strichartz estimates than in the Euclidean space because of a faster volume growth.
On the other hand, on a compact manifold $M$, Strichartz estimates only hold with loss of derivatives \cite{MR2058384} i.e. if $h=0$ and $I$ is bounded, then we have, for any S-admissible pair $(p,q)$,
$$\|\psi\|_{L^p (I,L^q (M))} \leq C \|\psi_0\|_{H^{1/p} (M)}. $$
In some compact manifolds, this inequality is known to be sharp \cite{MR1909648}. 

In the present paper, we will show that the Strichartz estimates \eqref{strireal} can be improved on the hyperbolic space and on a class of rotationally symmetric Cartan-Hadamard manifolds. Our first main result concerns the case $M=\Hy^N$. In this case, we say that $(p,q)$ is an admissible pair if 
$$\left(\frac{1}{p},\frac{1}{q}\right)\in  \left\lbrace\left(0,\frac{1}{2} \right)\times \left(0,\frac{1}{2}\right) \colon \frac{4}{p} +\frac{N}{q} \geq \frac{N}{2} \right\rbrace \cup \left\lbrace\left(0,\frac{1}{2}\right) \right\rbrace.$$

Also, since the bottom of the continuous spectrum of the Laplace-Beltrami operator on $\Hy^N$ is $\rho^2 = (N-1)^2/4$, instead of considering the operator $\Delta_{\Hy^N}^2 - \beta \Delta_{\Hy^N}$, we set
$$P=(\Delta_{\Hy^N} +\rho^2)^2 - \beta (\Delta_{\Hy^N} +\rho^2).$$
By definition, the bottom of the continuous spectrum of $P$ is $0$.  

\begin{thm}
\label{introstri}
Suppose that $N\geq 2$. Let $\psi$ be a solution to
$$\begin{cases} i\partial_t \psi +P \psi=h\mbox{ in}\ \R\times \Hy^N\\ \psi(0,\cdot)=\psi_0  \in L^2 (\Hy^N ),\end{cases}$$
on $I=[0,T]$. Assume that $|I|\leq 1$ if $\beta \leq 0$. Let $(p,q)$ and $(\tilde{p},\tilde{q})$ be admissible pairs. Then there exists a constant $C>0$ independent of  $T$ such that
\begin{equation*}
\|\psi\|_{L_t^p (I,L_x^q (\Hy^N))}\leq C \big(\|\psi_0\|_{L^2 (\Hy^N)} +\|h\|_{L_t^{\tilde{p}^\prime }(I, L_x^{\tilde{q}^\prime} (\Hy^N))}\big).
\end{equation*}
\end{thm}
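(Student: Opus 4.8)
The overall plan is the standard one for dispersive equations on the hyperbolic space (as in the works of Banica, Banica--Carles--Staffilani, Anker--Pierfelice, and Pausader's Euclidean analysis): diagonalise $P$ by the spherical (Helgason--Fourier) transform, extract from this a pointwise dispersive estimate for the propagator, and then invoke the $TT^{\ast}$ machinery together with Duhamel's formula. First I would note that $P$ is a spectral multiplier of $-\Delta_{\Hy^N}-\rho^{2}$: since the latter corresponds under the spherical Fourier transform to multiplication by $\lambda^{2}$, $P$ corresponds to multiplication by $m(\lambda):=\lambda^{4}+\beta\lambda^{2}$, so the homogeneous solution is $\psi(t)=e^{itP}\psi_0$ with $\widehat{e^{itP}\psi_0}(\lambda,\omega)=e^{itm(\lambda)}\widehat{\psi_0}(\lambda,\omega)$. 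Plancherel immediately gives $\|e^{itP}\psi_0\|_{L^{2}}=\|\psi_0\|_{L^{2}}$, so everything hinges on the dispersion. The operator $e^{itP}$ is radial convolution with
\begin{equation*}
k_{t}(r)=c_{N}\int_{0}^{\infty}e^{itm(\lambda)}\,\varphi_{\lambda}(r)\,|\mathbf{c}(\lambda)|^{-2}\,d\lambda ,
\end{equation*}
$\varphi_{\lambda}$ the elementary spherical function and $\mathbf c$ the Harish--Chandra function; recall that $|\mathbf{c}(\lambda)|^{-2}$ is smooth away from $0$, vanishes like $\lambda^{2}$ at the origin and grows like $\lambda^{N-1}$ at infinity, while $\varphi_{\lambda}$ carries the factor $e^{-\rho r}$ for large $r$. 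The whole matter thus reduces to bounding the $L^{q'}\!\to L^{q}$ norm of convolution by $k_t$.

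Next I would split $k_{t}=k_{t}^{0}+k_{t}^{\infty}$ with a smooth frequency cutoff at $\lambda\sim 1$ and estimate each piece by stationary and non-stationary phase in $\lambda$, the relevant phase being $\Phi(\lambda)=tm(\lambda)-r\lambda$, the linear term coming from the leading oscillation $e^{-i\lambda r}$ of $\varphi_\lambda$. On $k_t^{\infty}$, inserting the large-$\lambda$ asymptotic expansion of $\varphi_\lambda$ reduces matters to oscillatory integrals of essentially the Euclidean biharmonic type handled by Ben-Artzi--Koch--Saut, giving $|k_t^{\infty}(r)|\lesssim |t|^{-N/4}$ for $0<|t|\le1$, the factor $e^{-\rho r}$ absorbing the large-$r$ contribution; when $\beta>0$ the same analysis, now with a uniformly non-degenerate phase since $\partial_\lambda^{2}\Phi=t(12\lambda^{2}+2\beta)\ge 2\beta t$, is valid also for $|t|\ge1$. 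On $k_t^{0}$, the crucial extra gain comes from the order-two vanishing of $|\mathbf c(\lambda)|^{-2}$ at the origin: an integration by parts exploiting this weight (equivalently, $\int_0^{\infty}e^{is\lambda^{2}}\lambda^{2}g(\lambda)\,d\lambda=O(s^{-3/2})$ for smooth compactly supported $g$) upgrades the Schr\"odinger-type rate $|t|^{-1/2}$ to $|t|^{-3/2}$, yielding $|k_t^{0}(r)|\lesssim |t|^{-3/2}$ for $|t|\ge1$ when $\beta>0$, uniformly in $r$ (for $|t|\le1$ the low-frequency piece is harmless). This is exactly where the hypothesis on $I$ enters: for $\beta=0$ the phase $t\lambda^{4}$ is degenerate at $\lambda=0$, no long-time gain is available, and one restricts to $|I|\le1$. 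Altogether $\|e^{itP}\|_{L^{1}\to L^{\infty}}\lesssim|t|^{-N/4}$ for $|t|\le1$ and, if $\beta>0$, $\|e^{itP}\|_{L^{1}\to L^{\infty}}\lesssim|t|^{-3/2}$ for $|t|\ge1$; interpolating with $\|e^{itP}\|_{L^{2}\to L^{2}}\le1$ gives, for every $q\in(2,\infty)$, $\|e^{itP}\|_{L^{q'}\to L^{q}}\lesssim|t|^{-\frac N2(\frac12-\frac1q)}$ for $|t|\le1$ and $\lesssim|t|^{-3/2}$ for $|t|\ge1$ --- crucially, the large-time rate is independent of $q$.

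From these two families of dispersive bounds the Strichartz estimate follows by the $TT^{\ast}$ argument applied to $\psi(t)=e^{itP}\psi_0-i\int_0^{t}e^{i(t-s)P}h(s)\,ds$. I would split the retarded integral according to $|t-s|\le1$ and $|t-s|\ge1$. On the first region the Euclidean-type decay $|t-s|^{-\frac N2(\frac12-\frac1q)}$, combined with the Hardy--Littlewood--Sobolev inequality, produces the estimate precisely on the line $\frac4p+\frac Nq=\frac N2$; on the second region the $q$-independent decay $|t-s|^{-3/2}$, being integrable at infinity, allows (via Young's inequality in time and interpolation) every admissible pair, in particular $q$ down to but excluding $2$, and this is what fills in the interior of the admissible triangle. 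The Christ--Kiselev lemma removes the time-ordering away from the double endpoint, which in $(p,\tilde p)$ is then reached as in Keel--Tao, $q$ and $\tilde q$ remaining strictly above $2$ in accordance with the definition of admissible pair, while the extreme point $(p,q)=(\infty,2)$ is covered directly by $L^{2}$ conservation. Summing the two time regimes and inserting the homogeneous estimate for the $\|\psi_0\|_{L^{2}}$ term gives the claim, with $C$ independent of $T$ exactly because genuine time decay of $k_t$, not mere boundedness, is used.

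The main obstacle is the dispersive estimate itself, namely obtaining the sharp rates $|t|^{-N/4}$ (small time) and $|t|^{-3/2}$ (large time, $\beta>0$) \emph{uniformly in the radial variable}. This requires a careful analysis of the oscillatory integral defining $k_t(r)$: locating the stationary points of $\Phi(\lambda)=tm(\lambda)-r\lambda$, which for $\beta>0$ lie at scales governed by the relative sizes of $t$ and $r$; handling the polynomial growth at infinity and the limited regularity at the origin of the Plancherel density $|\mathbf c(\lambda)|^{-2}$ together with both the oscillation and the exponential decay of $\varphi_\lambda$; and gluing the high- and low-frequency contributions. A more routine, but still delicate, point is organising the $TT^{\ast}$ argument so that it recovers the enlarged admissible region rather than merely the Keel--Tao range.
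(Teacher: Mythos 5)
Your overall architecture (Helgason--Fourier diagonalisation of $P$, pointwise oscillatory-integral bounds on the radial convolution kernel, then $TT^{\ast}$) is the same as the paper's, and your description of the kernel analysis -- dyadic/stationary-phase treatment of $\Phi(\lambda)=tm(\lambda)-r\lambda$, the low-frequency gain from the $\lambda^{2}$ vanishing of $|\mathbf c(\lambda)|^{-2}$ giving $|t|^{-3/2}$ for $\beta>0$, the loss to $|t|^{-3/4}$ when $\beta=0$ -- matches what the paper actually proves. However, there is one genuine gap, and it sits at the single point where hyperbolic geometry is really used.

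You claim that interpolating the bound $\|e^{itP}\|_{L^{1}\to L^{\infty}}\lesssim|t|^{-3/2}$ ($|t|\ge 1$, $\beta>0$) with $\|e^{itP}\|_{L^{2}\to L^{2}}\le 1$ yields $\|e^{itP}\|_{L^{q'}\to L^{q}}\lesssim|t|^{-3/2}$ for every $q>2$, ``independent of $q$''. Riesz--Thorin does not give this: with $\theta=1-2/q$ it gives only $|t|^{-\frac32(1-2/q)}$, which degenerates as $q\downarrow 2$ and is not integrable at infinity for $q$ close to $2$. Concretely, for an admissible pair such as $(p,q)=(3,2+\varepsilon)$ with $\varepsilon$ small, the large-time kernel $\one_{\{|t-s|\ge1\}}|t-s|^{-\frac32(1-2/q)}$ is not bounded from $L^{p'}_{s}$ to $L^{p}_{t}$, so the $TT^{\ast}$ argument collapses precisely on the part of the admissible region that goes beyond the Euclidean line $\frac4p+\frac Nq=\frac N2$ -- which is the whole content of the theorem -- and the $T$-independence of the constant is lost as well. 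The paper closes this gap with the sharp Kunze--Stein inequality $\|f\ast g\|_{L^{q}(\Hy^{N})}\le\|f\|_{L^{q,1}(\Hy^{N})}\|g\|_{L^{q'}(\Hy^{N})}$ for $q>2$ (Cowling, Cowling--Meda--Setti), applied to the kernel $v_{t}$: the pointwise bound $|v_{t}(r)|\lesssim|t|^{-3/2}r^{(N+5)/6}e^{-(N-1)r/2}$ places $v_{t}$ in $L^{q,1}(\Hy^{N})$ for every $q>2$ with norm $\lesssim|t|^{-3/2}$, because the exponential spatial decay $e^{-(N-1)r/2}$ beats the volume growth $e^{(N-1)r}$ exactly when $q>2$. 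This is an improved Young inequality specific to $\Hy^{N}$ with no Euclidean analogue; your proposal uses the factor $e^{-\rho r}$ only to control the $L^{1}\to L^{\infty}$ norm and therefore never recovers the $q$-independent decay. Everything else in your outline (the $(\infty,2)$ endpoint via mass conservation, the splitting of the retarded operator at $|t-s|=1$, the restriction $|I|\le1$ when $\beta\le0$) is consistent with the paper.
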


Thanks to this Strichartz estimate, we can easily prove some global well-posedness and scattering results for
\begin{equation}
\label{4NLSeqintro}
\begin{cases}i\partial_t \psi +P \psi =\lambda|\psi|^{2\sigma}\psi \mbox{ in}\ \R\times \Hy^N\\ \psi(0,\cdot)=\psi_0.\end{cases}
\end{equation}

\begin{thm}
\label{thmexintro}
Let $N\geq 2$, $\lambda=\{-1,1\}$, $X_0=L^2 (\Hy^N)$ and $X_1 =H^2 (\Hy^N)$. For $i=0,1$, if $\sigma < \tilde{\sigma}_i= \begin{cases}  \tfrac{4}{N},& i=0\\ \tfrac{4}{(N-4)_+} ,&i=1 \end{cases}$, \eqref{4NLSeqintro} is locally well-posed for arbitrary $X_i$ data, i.e. for any $\psi_0\in X_i (\mathbb{H}^N)$, there exists $T>0$ such that the solution $\psi(t,\cdot)$  to \eqref{4NLSeqintro} exists for all $|t|< T$. Moreover, if $\sigma \leq \tilde{\sigma}_i$ and $\beta>0$, then there exists a constant $c>0$ such that if $\|\psi_0\|_{X_i (\mathbb{H}^N )}\leq c$ then \eqref{4NLSeqintro} is globally well-posed, i.e. $\psi (t,\cdot)$ exists for all $t\geq 0$. We also have that $\psi$ satisfies the following scattering property: there exist $\psi_\pm \in L^2 (\Hy^N)$ such that
$$\|\psi(t)- e^{itP}\psi_\pm \|_{X_i (\Hy^N)} \rightarrow 0\ as\ t\rightarrow \pm \infty.$$
\end{thm}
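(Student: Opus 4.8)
The plan is to run a standard contraction-mapping argument on the Duhamel formulation, using only the Strichartz estimate of Theorem~\ref{introstri}; no conservation law is needed, since the statement asserts only local well-posedness together with small-data global well-posedness and scattering. Write \eqref{4NLSeqintro} as
\[
\psi(t)=e^{itP}\psi_0-i\lambda\int_0^t e^{i(t-s)P}\bigl(|\psi(s)|^{2\sigma}\psi(s)\bigr)\,ds=:\Phi(\psi)(t),
\]
and seek a fixed point of $\Phi$ in a closed ball of $\mathcal{S}_i(I)=C(I,X_i)\cap\bigcap_{(p,q)}L^p_t(I,L^q_x(\Hy^N))$, the intersection being over finitely many admissible pairs, with $I=[0,T]$ in the local statement and $I=\R$ in the small-data global statement; this last is why $\beta>0$ is required there, as Theorem~\ref{introstri} holds on $I=\R$ precisely when $\beta>0$. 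Applying Theorem~\ref{introstri} — in the case $X_1=H^2$ also to $(\Delta_{\Hy^N}+\rho^2)\psi$, which satisfies the same equation with source $(\Delta_{\Hy^N}+\rho^2)h$ since $P$ commutes with $e^{itP}$ — the whole scheme reduces to a nonlinear estimate of the schematic form
\[
\bigl\||\psi|^{2\sigma}\psi\bigr\|_{L^{\tilde p'}_t(I,\,L^{\tilde q'}_x)}\le C\,|I|^{\theta}\,\|\psi\|_{\mathcal{S}_i(I)}^{2\sigma+1},\qquad\theta\ge0,
\]
for a suitable dual admissible pair $(\tilde p,\tilde q)$, together with its Lipschitz (difference) analogue.

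Consider first $X_0=L^2(\Hy^N)$. For local well-posedness with $\sigma<4/N$, take $q=2\sigma+2$ and $p$ determined by $\tfrac4p+\tfrac Nq=\tfrac N2$, with $(\tilde p,\tilde q)=(p,q)$; since $(2\sigma+1)q'=q$, H\"older in space followed by H\"older in time gives the displayed bound with $\theta=1-\tfrac{N\sigma}{4}$, which is positive exactly when $\sigma<4/N$, and $(1/p,1/q)$ is easily checked to lie in the admissible region of Theorem~\ref{introstri} for all $N\ge2$ and $0<\sigma<4/N$; hence a contraction on a small ball of $\mathcal{S}_0([0,T])$ with $T=T(\|\psi_0\|_{L^2})$ small closes. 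For the global statement one uses instead $(p,q)=(\tilde p,\tilde q)=(2\sigma+2,2\sigma+2)$: a direct computation gives $\tfrac4p+\tfrac Nq\ge\tfrac N2$ exactly when $\sigma\le4/N$, so this pair is admissible on $\Hy^N$ — this is where the richer Strichartz theory of $\Hy^N$ enters, since on $\R^N$ this pair would be admissible only at $\sigma=4/N$ — and with it the nonlinear estimate holds on $I=\R$ with $\theta=0$, namely $\bigl\||\psi|^{2\sigma}\psi\bigr\|_{L^{p'}_t(\R,L^{p'}_x)}=\|\psi\|_{L^p_t(\R,L^p_x)}^{2\sigma+1}$; the contraction then closes globally once $\|\psi_0\|_{L^2}$ is small, since $\|e^{itP}\psi_0\|_{L^p_t(\R,L^q_x)}\le C\|\psi_0\|_{L^2}$ is then small.

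For $X_1=H^2(\Hy^N)$ one argues analogously, now choosing the admissible pairs according to the $\dot H^2$ scaling and bringing in the Sobolev embeddings $H^2(\Hy^N)\hookrightarrow L^r(\Hy^N)$ — valid for $2\le r\le\tfrac{2N}{N-4}$ if $N\ge5$, for all finite $r$ if $N=4$, and for $r=\infty$ as well if $N\le3$ — while controlling in addition $(\Delta_{\Hy^N}+\rho^2)\psi$, i.e.\ working with a norm equivalent to $\|\cdot\|_{H^2}$. The new ingredient is a Leibniz/Moser-type estimate on $\Hy^N$ for $(\Delta_{\Hy^N}+\rho^2)\bigl(|\psi|^{2\sigma}\psi\bigr)$ — the non-integer power treated by fractional calculus as in Pausader's Euclidean analysis \cite{MR2505703}, the required multiplier and Sobolev inequalities being available on $\Hy^N$ — which reduces matters once more to an inequality of the above type. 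For $N\le4$ the embedding into large $L^r$ makes every finite $\sigma$ admissible, whereas for $N\ge5$ the threshold is $\sigma=4/(N-4)$, the subcritical range producing $\theta>0$ and $\sigma=4/(N-4)$ handled, as before, by $\beta>0$ and small data. Transporting these nonlinear estimates to $\Hy^N$ and verifying that all the H\"older exponents remain admissible is the main technical obstacle; the rest is the routine contraction scheme together with the standard continuity and uniqueness bookkeeping.

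Finally, scattering (asserted only for $\beta>0$, $\sigma\le\tilde\sigma_i$ and $\|\psi_0\|_{X_i}$ small): in that regime the global solution constructed above has $\|\psi\|_{\mathcal{S}_i(\R)}<\infty$, so by the nonlinear estimate above the Duhamel integral $\int_0^{\pm\infty}e^{-isP}\bigl(|\psi(s)|^{2\sigma}\psi(s)\bigr)\,ds$ converges in $X_i$; setting $\psi_\pm:=\psi_0-i\lambda\int_0^{\pm\infty}e^{-isP}\bigl(|\psi(s)|^{2\sigma}\psi(s)\bigr)\,ds\in X_i$ and bounding the tail over $[t,\pm\infty)$ by the same estimate gives
\[
\bigl\|\psi(t)-e^{itP}\psi_\pm\bigr\|_{X_i}=\Bigl\|\int_t^{\pm\infty}e^{i(t-s)P}\bigl(|\psi(s)|^{2\sigma}\psi(s)\bigr)\,ds\Bigr\|_{X_i}\longrightarrow0\quad\text{as }t\to\pm\infty,
\]
which is the asserted scattering property.
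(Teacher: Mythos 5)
Your proposal is correct and follows essentially the same route as the paper: a contraction on the Duhamel map in $C(I,X_i)\cap L^p_tL^q_x$ using the Strichartz estimate of Theorem~\ref{introstri}, with a scaling-subcritical pair and H\"older in time for local well-posedness, an admissible pair satisfying $p=(2\sigma+1)\tilde p'$, $q=(2\sigma+1)\tilde q'$ (your diagonal choice $(2\sigma+2,2\sigma+2)$ is one such) for small-data global well-posedness when $\beta>0$, the equation for $(\Delta_{\Hy^N}+\rho^2)\psi$ plus Sobolev embedding for the $H^2$ theory, and the Cauchy criterion on $e^{-itP}\psi(t)$ for scattering. The only cosmetic differences are that the paper exhibits the admissibility window abstractly rather than fixing the diagonal pair, and gains the factor $T^\lambda$ via a slightly perturbed exponent $(1-\tilde\lambda)p=\gamma\tilde p'$ instead of your scaling-sharp pair.
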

\begin{rmq}
We point out that the previous results also hold for the unshifted operator if we replace the conditions on $\beta$ by conditions on $\beta - 2\rho^2$. For instance, Theorem \ref{introstri} holds true for $\beta - 2\rho^2 \leq 0$ if $|I|\leq 1$. Also notice that to get the global well-posedness result in Theorem \ref{thmexintro}, we need to assume that $\beta>0$ (or $\beta - 2\rho^2>0$ for the unshifted operator). This comes from the large time behavior of the free propagator which is not integrable. 
\end{rmq}
Notice, in particular, that our scattering result is a lot better than the one in the Euclidean space. We point out that this result holds for not necessarily radially symmetric functions. The proof of Theorem \ref{thmexintro} is very standard once the Strichartz estimate holds. It relies on a fixed point argument for the existence part for an appropriate choice of functional spaces. 

Let us make some comments on the proof of Theorem \ref{introstri}. We follow the $TT^\ast$ method of Kato \cite{Kato} and Ginibre and Velo \cite{MR1151250} (see also Keel and Tao \cite{keel-tao}). The main difficulty is to obtain $L^p - L^q$-estimates on the dispersive propagator $e^{itP}$. Denoting by $v$ the kernel of this propagator, we will deduce this estimate from a pointwise bound of $v$ and the Kunze-Stein inequality (which is an improved Young convolution inequality). A crucial ingredient in our proof is the fact that it is possible to define a Fourier transform in the hyperbolic space enjoying most of the properties of the Euclidean Fourier transform. Using this transformation, we see that
$$v(t,x)=\int_{\R}  e^{-it (\lambda^4 +\beta \lambda^2 ) } \Phi_\lambda (r) |c(\lambda)|^{-2}\, d\lambda,$$
where $c(\lambda)$ is the Harish-Chandra coefficient and $\Phi_\lambda$ is a explicit function (see \eqref{defPhi} for a precise definition). At this point, we adopt the approach of Ionescu and Staffilani \cite{IS2009} to estimate this integral. Namely we split the integral into two parts depending on whether $|\lambda|\in [0,2^J]$ or  $|\lambda|\in [2^J,\infty]$ for a suitable choice of $J$ which depends on the critical value of the phase and then apply a dyadic decomposition of $[2^J ,\infty]$. Due to the mixed dispersion of our operator, the critical point of the phase will behave differently depending if $r/|t|$ is large or not. Let us also mention that this method only gives a decay in $|t|^{-1}$ when $|t|$ is large. The same holds for the classical Schr\" odinger equation but was proved to be not optimal by Anker and Pierfelice \cite{MR2566713} who obtained a decay in $|t|^{-3/2}$. Here, we refined a bit the estimate for $|t|$ large using the recent method of \cite{AMPVZ} to get the probably optimal decay in $|t|^{-3/2}$.   
Let us also comment a bit on the method used by Anker and Pierfelice \cite{MR2566713} to obtain the pointwise decay of the classical Schr\"odinger propagator. They use the fact that the inverse Fourier transform in the hyperbolic space can be obtained as the composition of the inverse Abel transform with the inverse Fourier transform on the real line. However, to use this fact, one has to obtain very precise estimates on the derivative of the real inverse Fourier transform of $e^{-it (\cdot^4 +\beta \cdot^2 ) } $. Compared to the Euclidean case, the propagator estimates we obtained are decaying exponentially fast in space, however they are worse for $|t|$ large.
 We end this first part dealing with the hyperbolic space by proving a trapping result saying that if the initial data is large enough then the solution to \eqref{4NLSeqintro} with $\lambda=1$ does not scatter whereas if it is small enough (in a quantified way), we get global existence. 

The second part of this paper will focus on the case where $M$ is a rotationally symmetric Cartan-Hadamard manifold. We recall that a Cartan-Hadamard manifold is a complete, connected and simply connected Riemannian manifold of non-positive sectional curvature. Due to the lack of Fourier transform in this space, we adopt a very different approach to prove Strichartz estimate. Following \cite{BaDyweight}, one can see that up to a change of variables, \eqref{4NLS} can be seen as an equation in the flat Euclidean space but with potentials for radially symmetric functions. We then apply Strichartz estimates for the pure fourth order Euclidean Schr\" odinger equation and consider the terms involving potential as inhomogeneous terms. Using the method of \cite{BaDyweight}, we use a smoothing estimate to deal with those terms. The smoothing estimate is a consequence of a uniform weighted estimate for the resolvent $[(-\Delta_{\R^N} - V)(-\Delta_{\R^N} -V +\beta)]^{-1}$, for some appropriate potential $V$. Here, we rely on the fact that the resolvent of this operator can be written as the difference of resolvents of second order operator for which estimates were already known see \cite{BPST}. Our result is the following:

\begin{thm}
\label{thmexCH}
Let $M=(\R^N ,g)$ be a complete rotationally symmetric manifold with Riemannian metric $g= dr^2 + \phi^2 (r) d\theta^2$. Set $\sigma(r)= \big(\frac{r}{\phi (r)}\big)^{(N-1)/2}$ and $V=\frac{\sigma''}{\sigma}+(N-1)\frac{\phi'}{\phi}\frac{\sigma'}{\sigma} $.
Assume that $V\in C^1 (\R^n \setminus \{0\})$ such that
$$|V(x)|\leq C |x|^{-2},$$
and
\begin{equation*}
\left|\Delta_{\R^N} \frac{V}{\langle x\rangle}\right| +\left|\nabla\frac{V'}{\langle x\rangle}\right|+\dfrac{ \big| \Delta_{\R^N} V +V^2 \big|}{\langle x \rangle} \leq \frac{C}{\langle x \rangle},
\end{equation*}
where $\langle x \rangle =(1+|x|^2)^{1/2}$. In addition, suppose that there exists $\delta_0$ such that
\[
\left(\frac{N}{2} -1\right)^2 +r^2 V \geq \delta_0,
\]
\[
\left(\frac{N}{2} -1\right)^2 - r^2 \partial_r (rV) \geq \delta_0,
\]
in $\R^N \setminus \{0\}$, and
\begin{equation}
\label{mainCHass}
\|\langle x\rangle^2 (\Delta V+V^2) \|_{L^{N/2,\infty} (\R^N)},\  \|\langle x\rangle^2 V^\prime\|_{L^{N/2,\infty} (\R^N)},\ \|\langle x\rangle^2 V\|_{L^{N/2,\infty} (\R^N)}\leq C,
\end{equation}
resp.
\begin{equation}
\label{mainCHass2}
\|\langle x\rangle (\Delta V+V^2) \|_{L^{N,\infty} (\R^N)},\  \|\langle x\rangle V^\prime\|_{L^{N,\infty} (\R^N)},\ \|\langle x\rangle V\|_{L^{N,\infty} (\R^N)}\leq C.
\end{equation}
 Let $\psi$ be a radial solution to
$$\begin{cases}i\partial_t \psi-\Delta_M^2 \psi +\beta \Delta_M \psi= h\mbox{ in}\ \R \times M,\\ \psi(0,\cdot)=\psi_0.\end{cases}$$
Then, there exists $C>0$ such that, for all intervals $I$, and all $B$-admissible resp. $S$-admissible pairs $(p_i ,q_i)$, we have
\begin{align*}
&\big\|\psi \sigma^{-(1-2/q_1)}\big\|_{L^{p_1} (I, L^{q_1} (M)) }\\
&\quad \leq C \big(\|\nabla_{M} \psi_0\|_{L^2 (M)} +\|\psi_0 |\nabla_{M} \sigma|/\sigma \|_{L^2 (M)} + \|h \sigma^{(1- 2/q_2)} \|_{L^{p_2^\prime} (I, L^{q_2^\prime } (M))}  \big).
\end{align*}
\end{thm}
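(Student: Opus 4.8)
The plan is to follow the strategy of Banica and Duyckaerts \cite{BaDyweight}: transfer the problem to flat space $\R^N$ by a Liouville-type substitution, apply the Euclidean Strichartz estimate \eqref{strireal} for the free biharmonic flow, and absorb the potential terms produced by the change of variables by a local smoothing estimate coming from uniform weighted resolvent bounds. \emph{Step 1 (reduction to $\R^N$).} For radial $f$ one has $\Delta_M f=f''+(N-1)(\phi'/\phi)f'$, and the choice $\sigma=(r/\phi)^{(N-1)/2}$ is precisely the one for which $(N-1)\phi'/\phi+2\sigma'/\sigma=(N-1)/r$; this gives the conjugation identity
\[
\sigma^{-1}\Delta_M(\sigma w)=\Delta_{\R^N}w+Vw
\]
for every radial $w$, with $V$ as in the statement. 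Putting $u=\psi/\sigma$, $g=h/\sigma$ and iterating, $\psi$ solves the equation on $M$ if and only if $u$ solves
\begin{equation}\label{flatu}
i\partial_t u-\Delta_{\R^N}^2u+\beta\Delta_{\R^N}u=g+\cW u,\qquad
\cW u:=2V\Delta_{\R^N}u+2\nabla V\cdot\nabla u+(\Delta V+V^2-\beta V)u .
\end{equation}
Since $\sigma^2\phi^{N-1}=r^{N-1}$, a direct computation gives the norm identities $\|\psi\,\sigma^{-(1-2/q)}\|_{L^q(M)}=\|u\|_{L^q(\R^N)}$ and $\|h\,\sigma^{1-2/q}\|_{L^{q'}(M)}=\|g\|_{L^{q'}(\R^N)}$, while $\psi_0=\sigma u_0$ and $|\nabla_M\sigma|=|\sigma'|$ give, by the triangle inequality, $\|\nabla_{\R^N}u_0\|_{L^2(\R^N)}\le\|\nabla_M\psi_0\|_{L^2(M)}+\|\psi_0|\nabla_M\sigma|/\sigma\|_{L^2(M)}$. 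Hence it suffices to prove, with a constant independent of $I$, that radial solutions of \eqref{flatu} obey $\|u\|_{L^{p_1}(I,L^{q_1}(\R^N))}\le C\big(\|\nabla_{\R^N}u_0\|_{L^2(\R^N)}+\|g\|_{L^{p_2'}(I,L^{q_2'}(\R^N))}\big)$.

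\emph{Step 2 (flat Strichartz; the obstruction).} Up to reversing time, \eqref{strireal} — in the homogeneous ($\dot H^1$-level) form compatible with the data term, i.e. the $S$-admissible estimate of the introduction together with Sobolev embedding — applies to \eqref{flatu} with $g+\cW u$ as inhomogeneity; $g$ is then placed directly in the dual Strichartz space. The term $\cW u$ is the difficulty: through $2V\Delta_{\R^N}u$ it carries two derivatives of $u$, so no dual-Strichartz bound is available for it at this regularity. Following \cite{BaDyweight} we instead control $\cW u$ in a weighted-$L^2$ (Kato smoothing) norm, exploiting the decay $|V|\lesssim|x|^{-2}$ and the Lorentz bounds \eqref{mainCHass} (resp. \eqref{mainCHass2}): with $X:=\|\langle x\rangle^{-1}\langle\nabla\rangle u\|_{L^2(I\times\R^N)}$ and its dual $X'$, Hölder in Lorentz spaces and Sobolev embedding yield
\[
\|\cW u\|_{X'}+\|\cW u\|_{(\text{dual Strichartz})}\lesssim\big(\|\langle x\rangle^2V\|_{L^{N/2,\infty}}+\|\langle x\rangle^2V^\prime\|_{L^{N/2,\infty}}+\|\langle x\rangle^2(\Delta V+V^2)\|_{L^{N/2,\infty}}\big)\,\|u\|_X ,
\]
and similarly with $N$ in place of $N/2$ via \eqref{mainCHass2} in the $S$-admissible case.

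\emph{Step 3 (smoothing estimate and conclusion).} It remains to bound $\|u\|_X$ by the data. This is a local smoothing estimate for solutions of \eqref{flatu}, i.e. for the perturbed flow $e^{-it\cH}$ with $\cH:=H(H+\beta)$, $H:=-\Delta_{\R^N}-V$, of the schematic form $\|u\|_X\lesssim\|\nabla_{\R^N}u_0\|_{L^2}+\|\langle x\rangle\langle\nabla\rangle^{-1}g\|_{L^2(I\times\R^N)}+\|\langle x\rangle\langle\nabla\rangle^{-1}\cW u\|_{L^2(I\times\R^N)}$, and it follows from a uniform-in-$z$ weighted resolvent bound for $\cH$. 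The key algebraic point is the partial-fraction identity $(\cH-z)^{-1}=\big(a_+(z)-a_-(z)\big)^{-1}\big[(H-a_+(z))^{-1}-(H-a_-(z))^{-1}\big]$ with $a_\pm(z)=\tfrac12\big(-\beta\pm\sqrt{\beta^2+4z}\big)$: it reduces the estimate to weighted resolvent bounds for the \emph{second-order} operators $H$ and $H+\beta$, which are inverse-square-type perturbations of $-\Delta_{\R^N}$ of the kind treated in \cite{BPST}; the hypotheses $(\tfrac N2-1)^2+r^2V\ge\delta_0$ and $(\tfrac N2-1)^2-r^2\partial_r(rV)\ge\delta_0$ are exactly what is needed there to rule out eigenvalues and zero-resonances and to run the multiplier argument. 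Feeding this smoothing bound and the estimate of Step 2 into the Strichartz and smoothing inequalities for \eqref{flatu}, a standard (non-circular) perturbative argument — bound $\|\cW u\|$ by $\|u\|_X$, close first for $\|u\|_X$ and then for the Strichartz norm, all constants being interval-independent — yields the claimed estimate, and hence the theorem by Step 1.

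\emph{Main obstacle.} The crux is Step 3: establishing the uniform weighted resolvent bound for $\cH=H(H+\beta)$, that is, transferring the second-order estimates of \cite{BPST} through the partial-fraction identity uniformly up to the edge of the spectrum — in particular near the threshold and, when $\beta=0$, near the degenerate point $a_+(z)=a_-(z)$, which must be handled separately — and verifying that the top-order piece $2V\Delta_{\R^N}u$ of $\cW u$ is genuinely absorbable. The latter is borderline: $\langle x\rangle^2V\in L^{N/2,\infty}$ is exactly enough to close the Hölder/Sobolev pairing against the smoothing norm, with nothing to spare, so the Lorentz (rather than Lebesgue) formulation of \eqref{mainCHass}--\eqref{mainCHass2} is essential.
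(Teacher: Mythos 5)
Your proposal follows essentially the same route as the paper: conjugation by $\sigma$ to a flat biharmonic equation with potential terms, the partial-fraction resolvent identity reducing matters to the second-order weighted resolvent bounds of Burq--Planchon--Stalker--Tahvildar-Zadeh, a Burq--G\'erard--Tzvetkov-type conversion to a local smoothing estimate, and Lorentz-space H\"older to absorb the potential terms into the endpoint Strichartz estimate, followed by duality and interpolation. The one point to tighten is that your smoothing norm $X=\|\langle x\rangle^{-1}\langle\nabla\rangle u\|_{L^2}$ carries only one derivative, whereas absorbing the top-order term $2V\Delta_{\R^N}u$ requires the weighted control of $\Delta_{\R^N}u$ as well, i.e.\ the $L^2\to H^2$ (not merely $H^1$) weighted resolvent estimate for $P_V$ with $H^1$ data, which the paper obtains from the $H^1$-level bound by an additional integration by parts against $\bar u/\langle x\rangle$.
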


Let us observe that \eqref{mainCHass} (resp. \eqref{mainCHass2}) holds true if, for large $r$, $V(r)\approx r^{-m}$, with $m\geq 4$ (resp. $m\geq 2$).
As in \cite{BaDyweight}, one can show that Theorem~\ref{thmexCH} holds true for $S$-admissible pairs if for instance $\phi (r)=r+a_1 r^3 +\ldots +a_k r^{2k +1}$, where $k\geq 1$ and $a_i>0$, $i=1,\ldots ,k$, or more generally, if $\phi (r)= Ar^m +o(r^m)$, with $A>0$, $m> (N-1)^{-1}$ and the radial sectional curvature of $M$ (i.e. $-\phi^{\prime \prime}/\phi$) at a point $x$ is negative and strictly greater than $- \dfrac{1}{2(N-1) }\dfrac{1}{r^2}$, where $r(x)=d(o,x)$ is the distance to the pole $o\in M$. Concerning $B$-admissible pairs, we need to impose stronger restrictions on the manifold for our result to hold. We see that the condition $|V(r)|\leq r^{-4}$ roughly implies that $|\phi(r)| \leq r e^{C/r^2}$, for some positive constant $C$. So our theorem holds at most for manifolds whose radial sectional curvature is greater than $-1/r^4$, for $r$ large. 
Let us observe in view of the result obtained in this paper that the validity of Strichartz inequality is still unknown for rotationally symmetric manifolds where $\phi$ has an intermediate growth between polynomial and exponential (or in terms of sectional curvature $K_M$, for $-C_1\leq K_M \leq -C_2 /r^{-2}$). This question is also open for the classical Schr\" odinger equation. We refer to \cite{MR2211154,MR2296623} for some examples of $V$ for which the Strichartz inequality (related to the classical Schr\" odinger equation) does not hold. Let us point out that along our proof, we also obtain Strichartz estimate for fourth order Schr\" odinger equation with potentials which can be of independent interest. We refer to \cite{MR4069234} for related results.

We finally end this paper by computing a localized virial inequality for radial functions on generic complete rotationally symmetric manifold following Boulenger and Lenzmann \cite{bou-lenz}. Thanks to this, we provide some conditions in order to construct finite time blowing-up solutions; see the beginning of Section \ref{secblowup}. Let us point out that these conditions are hard to check due to the fact that contrarily to the flat Euclidean flat case it is not possible to use a scaling argument to get rid of what is happening in the ball of radius $R$, for $R$ large enough. To check these conditions, we refer the readers to \cite{maple} where a Maple file using the same notation as below is available to verify if the fourth point is valid or not. The first condition corresponds to "star" and the second one to "star2".

The plan of this paper is the following: in Section~\ref{secprel}, we recall some well-known facts about the hyperbolic space. We begin with some geometric facts and then recall the definition and main properties of the Fourier transform. In Section~\ref{secpropa}, we prove the pointwise estimate for the dispersive propagator. Theorems \ref{introstri} and \ref{thmexintro} are establsihed in Section~\ref{secstri} as well as our trapping result. Section~\ref{secstri2} is dedicated to the proof of Theorem \ref{thmexCH}. In the last section, we give a proof of our finite-time blow-up result. 

\subsubsection*{Data availability statement:}
My manuscript has no associated data.

\subsubsection*{Conflict of interest:}
  Authors state no conflict of  interest.

\section{Preliminaries}\label{secprel}
\subsection{The hyperbolic space}
We consider the upper (sheet) hyperboloid model for the hyperbolic space $\Hy^N$. Thus we equip $\R^{N+1}$ with 
the Minkowski metric 
\[
-(dx^0)^2+\sum_{i=1}^N (dx^i)^2
\]
and the symmetric bilinear form
\[
[x,y]= x^0y^0 -\sum_{i=1}^N
x^iy^i.
\]
The upper sheet model for the hyperbolic space is then 
\[
\Hy^N=\{x\in\R^{N+1}\colon [x,x]=1,\ x^0>0\}
\]
equipped with the Riemannian metric induced by the Minkowski metric. By fixing a pole $o=(1,0,\ldots,0)\in\Hy^N$,  $\Hy^N$ can be identified with the homogeneous space $SO^{+}(N,1)/SO(N)$, where $SO^{+}(N,1)$ is the connected Lie group of $(N+1)\times (N+1)$-matrices $X$, with $\det X=1$ and $X_{00}>0$, that keep the form $[\cdot,\cdot]$ invariant and $SO(N)$ is the subgroup of $SO^{+}(N,1)$ that fixes the pole $o$. 
Furthermore, $\Hy^N$ can be expressed as
\[
\Hy^N=\{(t,\xi)\in\R^{N+1}\colon t=\cosh r,\ \xi=\sinh r\,\omega,\ r\ge 0,\omega\in\Sph^{N-1}\},
\]
where $\Sph^{N-1}\subset o^{\perp}=\{(0,y^1,\ldots,y^N)\in \R^{N+1}\}$, the orthogonal complement of $o$ with respect to $[\cdot,\cdot]$. In (global) coordinates $(r,\omega)$, 
the $SO(N,1)$-invariant Riemannian metric of $\Hy^N$ is given by  
\[
ds^2 = dr^2 + \sinh^2 r\, d\omega^2,
\]
where $d\omega^2$ is the standard metric on $\Sph^{N-1}$, and the Riemannian volume form by 
\[
d\mu=\sinh^{N-1}r\,dr\,d\omega.
\]
The distance between $x=(\cosh r,\sinh r\,\omega)$ and the pole $o$ is $d(x,o)=r$ and, in general,
\[
d(x,y)=\cosh^{-1}[x,y]
\]
for any $x,y\in\Hy^N$. 
Finally, the Laplace-Beltrami operator on $\Hy^N$ is given by
\[
\Delta_{\Hy^N}=\partial_r^2 +(N-1)\frac{\cosh r}{\sinh r}\partial_r
+\frac{1}{\sinh^2 r}\Delta_{\Sph^{N-1}},
\]
where $\Delta_{\Sph^{N-1}}$ is the Laplace-Beltrami operator on the unit sphere $\Sph^{N-1}$.
\subsection{The Fourier transform}
Following Helgason \cite{helga}, Banica \cite{banica2007}, and Ionescu and Staffilani \cite{IS2009} we define the Fourier transform on $\Hy^N$ as follows. For $\omega\in\Sph^{N-1}$ and $\lambda\in\R$, let $b(\omega)=(1,\omega)\in\R^{N+1}$ and $h_{\lambda,\omega}\colon\Hy^N\to\C$,
\[
h_{\lambda,\omega}(x)=[x,b(\omega)]^{i\lambda-\rho},
\]
where $\rho=(N-1)/2$. The functions $h_{\lambda,\omega}$ are generalized eigenfunctions of $\Delta_{\Hy^N}$. Indeed, we have
\[
-\Delta_{\Hy^N} h_{\lambda,\omega} = \left(\lambda^2 + \rho^2\right)h_{\lambda,\omega}.
\]
The Fourier transform of a function $f\in C_0(\Hy^N)$ (the space of continuous functions in $\Hy^N$ with compact support) is now defined by
\[
\hat{f}(\lambda,\omega)=\int_{\Hy^N}f(x)h_{\lambda,\omega}(x)\,d\mu,
\]
and the Fourier inversion formula is given by
\begin{align*}
f(x)&=\int_{\R}\int_{\Sph^{N-1}}\hat{f}(\lambda,\omega)\bar{h}_{\lambda,\omega}(x)
|c(\lambda)|^{-2}\,d\lambda\,d\omega\\
&=\int_{\R}\int_{\Sph^{N-1}}\hat{f}(\lambda,\omega)[x,b(\omega)]^{-i\lambda-\rho}|c(\lambda)|^{-2}\,d\lambda\,d\omega,
\end{align*}
where $c(\lambda)$ is the Harish-Chandra coefficient
\[
|c(\lambda)|^{-2}=\frac{1}{2(2\pi)^d}\frac{|\Gamma(i\lambda+\rho)|^2}{|\Gamma(i\lambda)|^2},
\]
and $d\omega$ is the standard measure on 
$\Sph^{N-1}$ with a suitable normalisation. It follows from the definition that 
\begin{equation}\label{cl-2}
|c(\lambda)|^{-2}=c(\lambda)^{-1}c(-\lambda)^{-1};
\end{equation}
 see \cite[Prop. A1]{I-JFA2000}. A version of the  Plancherel theorem holds in $\Hy^N$. Indeed, the Fourier transform $f\mapsto\hat{f}$ extends to an isometry of $L^2(\Hy^N)$ onto $L^2\big(\R\times\Sph^{N-1},|c(\lambda)|^{-2}\,d\lambda d\omega\big)$.  
If $f\in C_0(\Hy^N)$ is $SO(N)$-invariant, i.e. radial with respect to $o$, we have
\[
\hat{f}(\lambda,\omega)=\hat{f}(\lambda)
=\int_{\Hy^N}f(x)\Phi_{-\lambda}(x)\,d\mu,
\]
and
\[
f(x)=\int_{\R} \hat{f}(\lambda)\Phi_\lambda (x)|c(\lambda)|^{-2}\,d\lambda,
\]
where 
\begin{equation}
\label{defPhi}
\Phi_\lambda (x)=\int_{\Sph^{N-1}}[x,b(\omega)]^{-i\lambda-\rho}\,d\omega.
\end{equation}
By denoting $x=(\cosh r,\sinh r\,\vartheta)$, where $r=r(x)=d(x,o)$, we can express $\Phi_\lambda$ as
\begin{align*}
\Phi_\lambda(r)&=\int_{\Sph^{N-1}} \left(\cosh r-\sinh r\,(\vartheta\cdot\omega)\right)^{-i\lambda-\rho}\,d\omega\\
&=C\int_0^\pi  \left(\cosh r-\sinh r\,\cos\theta\right)^{-i\lambda-\rho}(\sin\theta)^{N-2}\,d\theta.
\end{align*}

\section{Estimates for Schr\"odinger propagator}\label{secpropa}

The most part of this section is devoted to the pointwise estimate for the dispersive propagator. As already written in the introduction, we will follow the proof of \cite{IS2009} but refine it a bit for large time using the approach of \cite{AMPVZ}. We will conclude this section by obtaining a $L^p-L^q$ estimate for the propagator using the previous pointwise estimate and the Kunze-Stein inequality.\\

 We begin by giving an expression of the propagator with the help of the Fourier transform defined in the previous section. Let $\psi$ be a solution to the linear equation
$$i\partial_t \psi - (\Delta_{\Hy^N}+\rho^2)^2 \psi +\beta (\Delta_{\Hy^N} +\rho^2) \psi =0.$$
By definition, we have, for $f$ smooth enough,
$$\widehat{\Delta_{\Hy^N} f} (\lambda ,\omega)=\int_{\Hy^N} f(x) \Delta_{\Hy^N} h_{\lambda ,\omega} (x)\, d\mu =-(\lambda^2 +\rho^2) \hat{f} (\lambda ,\omega),$$
so applying the Fourier transform to the equation, we obtain
$$i\partial_t \hat{\psi}- \lambda^4 \hat{\psi} - \beta \lambda^2 \hat{\psi} =0 . $$
Its solution is given by
\[
\hat{\psi}(t,\lambda,\omega)=e^{-it\left(\lambda^4+\beta \lambda^2\right)}.
\]
By the Fourier inversion formula, we find
$$\psi(t,x)=\int_{\R}  e^{-it (\lambda^4 +\beta \lambda^2 ) } \Phi_\lambda (r) |c(\lambda)|^{-2}\, d\lambda,  $$
where $r=r(x)$.
For this quantity to be well-defined, we regularize it by
the Fourier multiplier $\lambda\mapsto e^{-\varepsilon^2\lambda^4}$, for $\varepsilon>0$ small enough. So, we will consider 
$$I_\varepsilon (t,x) =\int_{\R}  e^{-it (\lambda^4 +\beta \lambda^2 )-\varepsilon^2 \lambda^4 } \Phi_\lambda (r) |c(\lambda)|^{-2}\, d\lambda. $$
We will obtain pointwise estimate for this propagator. Compared to the one in the Euclidean space, we see that it decays exponentially in space. The behavior for short time is the same whereas it has a slower decay for large time. We will see in the following that to get global existence, we only need to have a decay better than $t^{-1}$ for large time which in our case only seems to be available when $\beta>0$. 

\begin{prop}
\label{propestv}
We have, if $\beta>0$,
$$|I_\varepsilon(t,x)|\leq C \begin{cases}|t|^{-N/4} r^{(N+5)/6} e^{-(N-1)r/2},&\mbox{if }\ |t|\leq 1; \\ |t|^{-3/2}  r^{(N+5)/6} e^{-(N-1)r/2},&\mbox{if }\ |t|\geq 1. \end{cases}$$
If $\beta =0$, then there holds
$$|I_\varepsilon(t,x)|\leq C \begin{cases}|t|^{-N/4} r^{(N+5)/6} e^{-(N-1)r/2},&\mbox{if }\ |t|\leq 1; \\ |t|^{-3/4}  r^{(N+5)/6} e^{-(N-1)r/2},&\mbox{if }\ |t|\geq 1, \end{cases}$$
whereas if $\beta<0$, then we get 
$$|I_\varepsilon(t,x)|\leq C \begin{cases}|t|^{-N/4} r^{(N+5)/6} e^{-(N-1)r/2},&\mbox{if }\ |t|\leq r \ or\ |t|\leq 1; \\ |t|^{-1/2}  r^{(N+5)/6} e^{-(N-1)r/2},&\mbox{if }\ |t|\geq 1\ or\ |t|\geq r. \end{cases}$$
\end{prop}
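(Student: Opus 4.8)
The strategy is to estimate the oscillatory integral
$$I_\varepsilon(t,x)=\int_{\R} e^{-it(\lambda^4+\beta\lambda^2)-\varepsilon^2\lambda^4}\,\Phi_\lambda(r)\,|c(\lambda)|^{-2}\,d\lambda$$
by combining (i) the known pointwise/asymptotic behaviour of the spherical function $\Phi_\lambda(r)$ and of the Plancherel density $|c(\lambda)|^{-2}$, with (ii) a careful stationary-phase/van der Corput analysis of the phase $\varphi(\lambda)=\lambda^4+\beta\lambda^2$, which is where the case distinction in $\beta$ and in the size of $r/|t|$ enters. The first preliminary step is to record that $|c(\lambda)|^{-2}\lesssim |\lambda|^{N-1}$ for $|\lambda|\ge 1$ and $|c(\lambda)|^{-2}\lesssim |\lambda|^2$ for $|\lambda|\le 1$ (from the Gamma-function formula), and that $\Phi_\lambda(r)$ admits the standard decomposition $\Phi_\lambda(r)=e^{-\rho r}\big(c(\lambda)Q_{\lambda}(r)+c(-\lambda)Q_{-\lambda}(r)\big)$ for $r\ge 1$, with $Q_{\pm\lambda}(r)$ and its $\lambda$-derivatives controlled, and $|\partial_\lambda^k\Phi_\lambda(r)|\lesssim r^{k}e^{-\rho r}(1+|\lambda|)^{\cdots}$ type bounds; for $r\le 1$ one uses instead $|\Phi_\lambda(r)|\lesssim (1+|\lambda| r)^{-(N-1)/2}$ and polynomial control of derivatives. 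The exponential prefactor $e^{-(N-1)r/2}=e^{-\rho r}$ in the final bound comes entirely from this $\Phi_\lambda$ decomposition; the polynomial loss $r^{(N+5)/6}$ will emerge from the stationary-phase analysis near the critical point of the phase (the $1/6$ exponent is the signature of a degenerate-at-$0$ quartic phase, exactly as in the Euclidean kernel $I_0$).

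Following Ionescu–Staffilani, I would choose a threshold $2^J$ depending on the critical point of the phase and split $I_\varepsilon=\int_{|\lambda|\le 2^J}+\int_{|\lambda|\ge 2^J}$. On the high-frequency part $|\lambda|\ge 2^J$ I dyadically decompose $[2^J,\infty)$ into shells $|\lambda|\sim 2^j$, insert the $c$-function expansion of $\Phi_\lambda$, and integrate by parts repeatedly in $\lambda$: the phase derivative is $\varphi'(\lambda)=4\lambda^3+2\beta\lambda$, which on each shell has size $\sim \max(2^{3j},\beta 2^j)$, and each integration by parts gains a factor $\lesssim (|t|\,2^{3j})^{-1}$ (modulo the harmless $r$-factors from differentiating $Q_{\pm\lambda}$ and the $2^{-2j}$ smallness from $\varepsilon$-regularisation which we do not need). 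Summing the geometric series over $j\ge J$ gives the high-frequency contribution. On the low-frequency part $|\lambda|\le 2^J$, the decisive point is locating the stationary point: $\varphi'(\lambda_c)=\pm r/|t|$ up to the spatial phase coming from $[x,b(\omega)]^{-i\lambda-\rho}$ inside $\Phi_\lambda$; equivalently one writes $\Phi_\lambda(r)=\int_0^\pi(\cosh r-\sinh r\cos\theta)^{-i\lambda-\rho}(\sin\theta)^{N-2}d\theta$ and does a joint stationary phase in $(\lambda,\theta)$. When $\beta>0$ and $|t|$ large with $r\lesssim |t|$, the quadratic term $\beta\lambda^2$ dominates near $\lambda=0$, the phase is nondegenerate there, and ordinary stationary phase in one variable yields the $|t|^{-1/2}$ gain, which together with the $N$-dimensional angular integration and the $|c(\lambda)|^{-2}d\lambda\sim\lambda^2d\lambda$ (contributing another $|t|^{-3/2}$-worth after integrating the small-$\lambda$ region)… this bookkeeping is what produces $|t|^{-3/2}$ for $\beta>0$, $|t|^{-3/4}$ for $\beta=0$ (degenerate quartic, gain $|t|^{-1/4}$ instead of $|t|^{-1/2}$), and only $|t|^{-1/2}$ for $\beta<0$ in the regime $|t|\ge r$ (where the stationary point of $-\lambda^4+|\beta|\lambda^2$ sits at a nonzero location and one gets a clean one-dimensional nondegenerate gain but the pairing with the $c$-function is worse); for $|t|\le r$ or $|t|\le 1$ the critical point is pushed to $|\lambda|\sim (r/|t|)^{1/3}$ or there is no stationary point in the relevant range, and one recovers the short-time Euclidean-type bound $|t|^{-N/4}$ by the same dyadic-plus-van-der-Corput argument applied now with the critical scale $2^J\sim(r/|t|+1)^{1/3}$, picking up the $r^{(N+5)/6}$ loss from the measure of the critical shell.

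The refinement for $|t|\ge 1$ in the $\beta>0$ case — upgrading the naive $|t|^{-1}$ from \cite{IS2009}-type arguments to the sharp $|t|^{-3/2}$ — is where I would import the method of \cite{AMPVZ}: rather than treating $\Phi_\lambda$ via the crude $c$-function bound, one keeps the oscillatory factor $Q_{\pm\lambda}(r)\sim 1+O(1/r)$ and performs the $\lambda$-integral against $e^{-it(\lambda^4+\beta\lambda^2)}|c(\lambda)|^{-2}$ with $|c(\lambda)|^{-2}$ replaced by its expansion in powers of $\lambda$ near $0$ (so the leading term is $c_N\lambda^2$), then does honest stationary phase about $\lambda=0$ for the $\beta\lambda^2$ phase, the quartic correction being lower order for $|t|\lesssim $ (scales under consideration). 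The main obstacle, and the part demanding the most care, is precisely this uniform control of the low-frequency oscillatory integral across the transition region $r\sim|t|^{1/3}$ (short time) and $r\sim|t|$ (long time, $\beta>0$): one must patch the stationary-phase estimate valid when a genuine critical point exists to the non-stationary (integration-by-parts) estimate valid when it does not, uniformly in $r$ and $t$, while simultaneously tracking the exact power of $r$ lost — getting the exponent $(N+5)/6$ rather than something slightly larger requires exploiting the $(\sin\theta)^{N-2}$ weight in $\Phi_\lambda$ and the vanishing of $|c(\lambda)|^{-2}$ at $\lambda=0$ rather than estimating them separately by absolute values.
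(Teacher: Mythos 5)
Your overall route is the paper's: decompose $\Phi_\lambda$ via the Harish--Chandra expansion for $r\gtrsim 1$ (and the $(1+r|\lambda|)^{-\rho}$ bounds for $r\le 1$), split at a dyadic threshold $2^J$ tied to the critical point of $\Psi(\lambda)=t(-\lambda^4-\beta\lambda^2)+r\lambda$ (so $2^J\sim(r/|t|)^{1/3}$ or $r/|t|$), apply van der Corput on $|\lambda|\le 2^J$ and nonstationary integration by parts on the dyadic shells above, and for $|t|\ge 1$ import the cutoff at $|\lambda|\sim|t|^{-1/2}$ (resp.\ $|t|^{-1/4}$ when $\beta=0$) from \cite{AMPVZ}, where the trivial bound $\int_{|\lambda|\lesssim|t|^{-1/2}}\lambda^2\,d\lambda\sim|t|^{-3/2}$ indeed produces the sharp long-time decay. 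That part of your plan is sound, although the decisive quantitative steps (the exact powers of $2^J$ coming out of the van der Corput estimate, the summability of the shells, and hence the exponent $(N+5)/6=\rho/3+1$) are asserted rather than carried out.

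The genuine gap is your treatment of $\beta<0$. You attribute the degradation to $|t|^{-1/2}$ to the stationary point sitting ``at a nonzero location'' with ``worse pairing with the $c$-function''; that is not the mechanism, and an argument built on it would not close. The correct reason is the degeneration of the \emph{second} derivative of the phase: $\Psi''(\lambda)=-2t(6\lambda^2+\beta)$ vanishes at $\lambda_1=\sqrt{-\beta/6}$, and $\Psi'$ acquires an extra root near $\lambda_0=\sqrt{-\beta/2}$ when $r/|t|$ is small. Near $\lambda_0$ one has $|\Psi''|\gtrsim|\beta||t|$ and van der Corput gives only $|t|^{-1/2}$; near $\lambda_1$, where $\Psi'$ and $\Psi''$ can vanish simultaneously (when $r/|t|$ is comparable to a fixed multiple of $\lambda_1$), one must pass to the third derivative and gets only $|t|^{-1/3}$, which is then rescued because $r\approx|t|$ forces the factor $e^{-\rho r}$ to absorb the loss. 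Your plan contains no analysis of these two extra critical regions, and without it the stated $\beta<0$ bound (and in particular why $|t|\le r$ versus $|t|\ge r$ enters the case distinction) cannot be obtained.
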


\begin{proof}
We follow closely the proof of \cite[Lemma 3.2]{IS2009}. 
We start by recalling the estimate 
\begin{equation}
\label{cest}
\big|\partial_{\lambda}^{\alpha}(\lambda^{-1} c(\lambda)^{-1})\big| \leq C (1+|\lambda|)^{\rho-1 - \alpha}
\end{equation}
for $\alpha\in [0,N+2]\cap \Z$ and $\lambda\in\R$ from \cite[Prop. A1]{I-JFA2000}.

We will also need some estimates for the function $\Phi_\lambda$ contained in \cite[Prop. A2]{I-JFA2000}. When $r\geq 1/10$, we rewrite $\Phi_\lambda$ as
\begin{equation}
\label{phismall}
\Phi_\lambda (r)=e^{-\rho r} \big(e^{i\lambda r} c(\lambda) m_1 (\lambda ,r) +e^{-i\lambda r} c(-\lambda ) m_1 (-\lambda ,r)\big),
\end{equation}
where, for $\alpha \in [0,N+2] \cap \Z$ and $\lambda \in \R$, 
\begin{equation}
\label{m1est}
|\partial_\lambda^\alpha m_1 (\lambda ,r)| \leq C (1+|\lambda|)^{-\alpha}.
\end{equation}
On the other hand, if $r\leq 1$, $\Phi_\lambda$ can be rewritten as
\begin{equation}
\label{philarge}
\Phi_\lambda (r)= e^{i\lambda r} m_2 (\lambda ,r)+ e^{-i\lambda r} m_2 (-\lambda ,r),
\end{equation}
where, for $\alpha \in [0,N+2] \cap \Z$ and $\lambda \in \R$,
\begin{equation}
\label{m2est}
|\partial_\lambda^\alpha m_2 (\lambda ,r)| \leq C (1+r|\lambda|)^{-\rho} (1+|\lambda|)^{ - \alpha}.
\end{equation}
We will also use repeatedly the following two standard  estimates for oscillatory integrals: if $A,B,C \in \R$, $m\in C^1 (\R)$ is a compactly supported function, and $\tilde{m} : \R \rightarrow \mathbb{C}$ is a smooth function supported in $[-2,-1/2] \cup [1/2 ,2]$, we have, by the van der Corput lemma,
\begin{equation}
\label{VdC1}
\Big|\int_{\R} e^{i(A\lambda^4 +B\lambda^2 +C\lambda)}m (\lambda)\, d\lambda \Big|\leq \frac{c}{ A_i} \int_{\R} |\partial_\lambda m (\lambda)|\, d\lambda,
\end{equation}
where $A_1= |A|^{1/4}$ if $\sgn (A) \neq \sgn(B)$ or $A_2 = \max \{|A|^{1/4},|B|^{1/2}\}$ if $\sgn(A) = \sgn(B)$, and by integrating by parts $k\in \N$ times
\begin{equation}
\label{VdC2}
\Big|\int_{\R} e^{i(A\lambda^4 +B\lambda^2 +C\lambda)}\tilde{m} (\lambda)\, d\lambda \Big| 
\leq 
\frac{c}{D+1}\sum_{j=0}^k \int_{\R} |\partial_\lambda^j \tilde{m} (\lambda)|\, d\lambda,
\end{equation}
where 
\[
D=\min_{|\lambda| \in [1/2 ,2]}\big|4A\lambda^3 +2B\lambda +C\big|^k
\]
which is assumed to be positive.

We fix a smooth even function $\eta_0 : \R \rightarrow [0,1]$ supported in $[-2,-1/2] \cup [1/2 ,2]$ such that, for any $\lambda \in \R \setminus \{0\}$,
\begin{equation}
\label{eta0}
\sum_{j\in \Z} \eta_0 (\lambda /2^j)=1.
\end{equation}
Let $\eta_j (\lambda )= \eta_0 (\lambda /2^j)$ and $\eta_{\leq j}= \sum_{j^\prime \leq j} \eta_{j^\prime}$, for any $j\in \Z$. In view of \eqref{phismall} and \eqref{philarge}, we will split our estimates depending on the value of $r$.\\
 We begin by considering the case $r\geq 1$. In view of \eqref{cl-2}, \eqref{phismall} and \eqref{eta0}, we need to estimate
 \begin{align*}
&\Big|\int_{\R} e^{i\Psi (\lambda ) -\varepsilon^2 \lambda^4 } \eta_{\leq J}(\lambda)m_1 (\lambda ,r) c(-\lambda)^{-1}\,d\lambda \Big| \\
&\quad +\sum_{j>J}\Big|\int_{\R} e^{i\Psi (\lambda ) -\varepsilon^2 \lambda^4 } \eta_{j}(\lambda)m_1 (\lambda ,r) c(-\lambda)^{-1}\,d\lambda\Big| \\
& = I+\sum_{j>J}II_j,  
\end{align*}
where $\Psi(\lambda)= t (-\lambda^4 -\beta\lambda^2) +r\lambda$ and $J$ is the smallest integer such that $2^J \geq 2^{10}\lambda_{r,t}$, with 
$$\lambda_{r,t}=\begin{cases} (r/|t|)^{1/3},&\mbox{if }\  r/|t| \geq 1 ;\\ 
r/|t|,&\mbox{if }\  r/|t| \leq 1. \end{cases}$$
Notice that $\lambda_{r,t}$ is chosen in view of the asymptotics of the roots of $\Psi^\prime$ when $\beta\geq 0$. We will deal with the case $\beta <0$ at the end of the proof (in this case, we have more critical points). Let us point out that thanks to the above definition of $J$, the phase only has critical points in the set where $\eta_{\leq J}$ is supported i.e. in the term $I$. In view of \eqref{VdC1} and \eqref{VdC2}, we will need to estimate the partial derivatives  of $m_1 (\lambda ,r) c(-\lambda)^{-1}$ with respect to $\lambda$. Using \eqref{cest} and \eqref{m1est}, we find, for any $k\in \N \cap [0,N+2]$,
\begin{align}
\label{estder}
\big|\partial_{\lambda}^{k}\big(m_1 (\lambda ,r) c(-\lambda)^{-1}\big)\big| & = \big|\partial_{\lambda}^{k}\big(m_1 (\lambda ,r)\lambda\lambda^{-1} c(-\lambda)^{-1}\big)\big|\nonumber\\
&\leq C \sum_{k_1 +k_2 +k_3 =k} 
\big|\big(\partial_{\lambda}^{k_1}m_1(\lambda,r)\big)\,\big(\partial_{\lambda}^{k_2}\lambda\big)\, \partial_{\lambda}^{k_3}\big(-\lambda^{-1} c(-\lambda)^{-1}\big)\big|\nonumber\\
&\leq C \sum_{k_1+k_2+k_3=k} (1+|\lambda|)^{\rho -1 - k_1 - k_3} \big|\partial_{\lambda}^{k_2}\lambda\big|\nonumber\\
&\leq C (|\lambda| (1+|\lambda|)^{\rho -1 - k}+\one_{k\geq 1} (1+|\lambda|)^{\rho -k} ).
\end{align}
Using \eqref{estder} with $k=0,1$, the fact that the support of $\eta_{\leq J} \subset [-2^J,2^J]$ and \eqref{VdC1}, we get in case $\beta>0$
\begin{align*}
I&=\Big|\int_\R e^{i\Psi(\lambda)}e^{-\varepsilon^2 \lambda^4} \eta_{\leq J}(\lambda) m_1 (\lambda ,r) c(-\lambda )^{-1}\, d\lambda\Big|\\
&\leq C \min \big(|t|^{-1/2},|t|^{-1/4}\big) \int_{\R} 
\big|\partial_\lambda \big(\eta_{\leq J}(\lambda) m_1 (\lambda ,r) c(-\lambda)^{-1}\big)\big|\, d\lambda  \\
&\leq C  \min \big(|t|^{-1/2},|t|^{-1/4}\big) \int_{\R} \big|\big(-\lambda\partial_\lambda \eta_{\leq J}(\lambda)\big) m_1 (\lambda ,r)\big(-\lambda^{-1} c(-\lambda)^{-1}\big)
\\
&\qquad + \eta_{\leq J}(\lambda) \partial_\lambda \big(m_1 (\lambda ,r) c(-\lambda)^{-1}\big)
\big|\, d\lambda\\
&\leq C \min \big(|t|^{-1/2},|t|^{-1/4}\big)\Big(  2^J (1+2^J)^{\rho -1} + \int_0^{2^J} \big(\lambda (1+ \lambda)^{\rho -2}+ (1+\lambda)^{\rho -1} \big)\,d\lambda\Big)\\
&\leq C  \min \big(|t|^{-1/2},|t|^{-1/4}\big) 
\begin{cases} 
2^{\rho J},&\mbox{if }\ J\geq 0;\\ 
2^J ,&\mbox{if }\ J\leq 0 . 
\end{cases}
\end{align*}
 Above we have ignored the term $e^{-\varepsilon^2\lambda^4}$ since it has no effect on the result. In all the following, we are going to do the same.
So, if $r/|t| \geq 1$ which implies that $J\geq 0$, we deduce that
\begin{align}
\label{1412e1}
I &\leq C \min \big(|t|^{-1/2},|t|^{-1/4}\big) 2^{\rho J}\nonumber \\
&\leq C\min \big(|t|^{-1/2},|t|^{-1/4}\big)  (r/|t|)^{\rho /3} \nonumber\\
&\leq Cr^{\rho /3}
\begin{cases} 
|t|^{-(2N+1)/12},&\mbox{if }\ |t|\leq 1;\\  
|t|^{-(N+2)/6},&\mbox{if }\ |t|\geq 1 ,
\end{cases}
\end{align} 
whereas, if $r/|t|\leq 1$ which implies that $|t|\geq 1$, we get
\begin{equation}
\label{1412e2}
I\leq C\min \big(|t|^{-1/2},|t|^{-1/4}\big) 2^{ J} 
\leq C r|t|^{-3/2}.
\end{equation}
We also used that $2^J \leq 2^{11}\lambda_{r,t}$.

If $\beta\le 0$, we have instead
\begin{align*}
I &=\Big|\int_\R e^{i\Psi(\lambda)}e^{-\varepsilon^2 \lambda^4} \eta_{\leq J}(\lambda) m_1 (\lambda ,r) c(\lambda )^{-1}\, d\lambda\Big|\\
&\leq C  |t|^{-1/4} 
\begin{cases} 2^{\rho J},&\mbox{if }\  J\geq 0,\\ 
2^J ,&\mbox{if }\  J\leq 0 . \end{cases}
 \end{align*}
This implies that
\begin{equation}
\label{1412e3}
I\le Cr^{\rho/3}|t|^{-(2N+1)/12}
\end{equation}
if $r/|t|\geq 1$, and
\begin{equation}
\label{1412e4}
I\leq C  r|t|^{-5/4}, 
\end{equation}
if $r/|t| \leq 1$.

Next, we are going to estimate the $II_j$ terms. We recall that by definition of $J$, the phase does not have any critical point on the support of these integrals. On the other hand, 
for $j\geq J+1$ and $\beta> 0$, we deduce from \eqref{VdC2}, \eqref{estder} and a change of variables, that 
\begin{align*}
II_j=&\Big|\int_\R e^{i\Psi(\lambda)}e^{-\varepsilon^2\lambda^4} \eta_{j}(\lambda) m_1 (\lambda ,r) c(-\lambda )^{-1}\, d\lambda\Big|\\
&\leq \frac{C}{1+2^{Nj}\min_{|\lambda|\in [2^{j-1},2^{j+1}] } |\partial_\lambda\Psi(\lambda)|^N} 2^j \int_{1/2}^2 \sum_{k=0}^{N} 
\big|\partial_\lambda^k \big(\eta_0(\lambda)m_1(2^j \lambda,r)c(-2^j\lambda )^{-1}\big)\big|\, d\lambda \\
&\leq \frac{C}{1+2^{Nj}(r+(2^{3j}+2^{j}) |t|)^N}  \sum_{k=0}^{N} 2^j  \big(2^j (1+2^j)^{\rho -1 -k} +(1+2^j)^{\rho - k}\big)\\ 
&\leq   \frac{C}{1+2^{Nj}(r+(2^{3j}+2^{j}) |t|)^N} \begin{cases}2^{j (\rho +1)},&\mbox{if }\ j\geq 0, \\
2^j ,&\mbox{if }\  j\leq 0 . \end{cases}
\end{align*}
If $J < 0$, we have
$$\sum_{j=J+1}^{0}\dfrac{2^j}{(1+(2^{4j} +2^{2j} )|t| )^N} 
\leq \int_0^1 \dfrac{2}{(1+x^2 |t|)^N}\, dx 
\leq C(1+|t|)^{-1/2}.$$
Notice that in the case $j\geq 0$, we have $\sum_{j\geq 0} II_j \leq 1/(1+|t|)^N$.
In case $\beta=0$, we have 
\[
 II_j\leq   \frac{C}{1+2^{jN}(r+2^{3j} |t|)^N} \begin{cases}2^{j (\rho +1)},&\mbox{if }\ j\geq 0, \\
2^j ,&\mbox{if }\  j\leq 0  \end{cases}
\]
and 
\[
\sum_{j=J+1}^{0}\dfrac{2^j}{(1+2^{4j}|t| )^N} 
\leq C(1+|t|)^{-1/4}
\] for $J<0$.
Combining the previous bounds, \eqref{1412e1} and \eqref{1412e2}, we obtain that, for $\beta > 0$,
$$I+\sum_{j>J}II_j  \leq C (|t|^{-N/4} +\max\{|t|^{-1/2},|t|^{-(N+2)/6}\}) (1+r)^{\rho/3 +1},$$
whereas, if $\beta=0$, using \eqref{1412e3} and \eqref{1412e4}, we get
$$I+\sum_{j>J}II_j  \leq C (|t|^{-N/4} +|t|^{-1/4}) (1+r)^{\rho/3 +1},$$
Next, we consider the case $r\leq 1$. We will need the following estimate, where we use \eqref{m2est},
\begin{align}
\label{estm22}
&\big|\partial_\lambda^k\big(m_2 (\lambda ,r) | c(\lambda)|^{-2}\big)\big|
=\big|\partial_\lambda^k\big(m_2 (\lambda ,r) c(\lambda)^{-1}c(-\lambda)^{-1}\big)\big|
\nonumber\\
&\quad \leq C \sum_{k_1 +k_2 +k_3 +k_4=k}
\big|\partial_\lambda^{k_1} m_2(\lambda,r)\partial_\lambda^{k_2}(\lambda^2)\partial_\lambda^{k_3}\big(\lambda^{-1}c(\lambda)^{-1}\big)\partial_\lambda^{k_4}\big(-\lambda^{-1} c(-\lambda)^{-1}\big)\big|\nonumber\\
&\quad\leq C \sum_{k_1+k_2+k_3+k_4=k} (1+r |\lambda|)^{-\rho}  (1+|\lambda|)^{2(\rho -1) - k_1 - k_3-k_4} \big|\partial_\lambda^{k_2}(\lambda^2)|\nonumber\\
&\quad\leq C (1+r|\lambda|)^{-\rho} (1+|\lambda|)^{2(\rho -1)}   (  |\lambda|^2 (1+|\lambda|)^{- k}+ \one_{k\geq 1} |\lambda| (1+|\lambda|)^{1- k} \\
&\qquad +\one_{k\geq 2}  (1+|\lambda|)^{2- k}).\nonumber
\end{align}


Using the previous estimate and \eqref{VdC1}, we get in case $\beta > 0$,
\begin{align*}
\tilde{I}=&\left|\int_\R e^{i\Psi(\lambda)}e^{-\varepsilon^2 \lambda^4} \eta_{\leq J}(\lambda) m_2 (\lambda ,r)| c(\lambda )|^{-2}\, d\lambda\right|\\
&\leq C \min \big(|t|^{-1/2},|t|^{-1/4}\big) \int_{\R}\left|\partial_\lambda \big(\eta_{\leq J} m_2 (\lambda ,r) |c(\lambda)|^{-2} \big)\right|\, d\lambda     \\
&\leq C  \min \big(|t|^{-1/2},|t|^{-1/4}\big) \int_{\R}\Big| \big(\partial_\lambda \eta_{\leq J}\big) m_2 (\lambda ,r) |c(\lambda)|^{-2} \\
&\qquad + \eta_{\leq J} \partial_\lambda \big(m_2 (\lambda ,r) |c(-\lambda)|^{-2}\big)\Big|\, d\lambda\\
&\leq C \min \big(|t|^{-1/2},|t|^{-1/4}\big)\Big[  2^{2J}(1+r 2^J)^{-\rho} (1+2^J)^{2(\rho -1)} \\
&\quad + \int_0^{2^J} (1+r\lambda)^{-\rho} (1+ \lambda)^{2(\rho -1)} \big(\lambda^{2} (1+\lambda)^{-1} +\lambda \big)\,d\lambda \Big]\\
&\leq C \min \big(|t|^{-1/2},|t|^{-1/4}\big) \begin{cases} 2^{\rho J}r^{-\rho},&\mbox{if }\ J\geq 0,\ r2^J \geq 1\\ 2^{2\rho J}   ,&\mbox{if }\ J\geq 0,\ r2^J \leq 1
\\ 2^{2J}  ,&\mbox{if }\ J\leq 10 . \end{cases}
 \end{align*}

So, when $r\leq 1$, $r/|t| \geq 1$ and $2^J r\geq 1$, which implies $r^4 \geq C|t|$, we have 
\begin{align*}
 \tilde{I}&\leq C \min \big(|t|^{-1/2},|t|^{-1/4}\big) 2^{\rho  J} r^{-\rho} \\
&\leq C |t|^{-1/4}  \big(r/|t|\big)^{\rho/3} r^{-\rho}\\
& \leq C |t|^{-1/4 - \rho /2}=C |t|^{-N/4}.  
\end{align*}

When $r\leq 1$, $r/|t| \geq 1$ and $2^J r\leq 1$, which imply $r^4 \leq C |t|$, we get
\begin{align*}
\tilde{I}& \leq C \min \big(|t|^{-1/2},|t|^{-1/4}\big) 2^{2\rho  J}\\
& \leq  C |t|^{-1/4}\big(r/|t|\big)^{2\rho/3} \leq C |t|^{-1/4}  |t|^{ \rho /6} |t|^{-2\rho /3}=C |t|^{-N/4}.   
\end{align*}

Finally, when $r\leq 1$ and $r/|t| \leq 1$, we find
\begin{align*}
\tilde{I} &\leq C \min \big(|t|^{-1/2},|t|^{-1/4}\big) 2^{2 J} \leq C \min \big(|t|^{-1/2},|t|^{-1/4}\big) \big(r/|t|\big)^2\\
& \leq C (|t|^{-3/2}+|t|^{-N/4}),
\end{align*}
since $|t|^{-1/4-2 }\leq |t|^{-3/2}$ if $|t|\geq 1$ and $|t|^{-1/2 }\leq |t|^{-N/4}$, if $|t|\leq 1$.
The same holds if $\beta = 0$; instead of having $\min (|t|^{-1/2},|t|^{-1/4})$, we have $|t|^{-1/4}$, but the estimates still hold. So overall, we showed that
$$\tilde{I} \leq C (|t|^{-3/2}+|t|^{-N/4}). $$
To conclude the proof, we consider the case where 
$j\ge J+1$ and $\beta> 0$. 
We have, using \eqref{VdC2} and \eqref{estm22},
\begin{align}\label{esttilde2}
\widetilde{II}_j=
&\Big|\int_\R e^{i\Psi(\lambda)}e^{-\varepsilon^2 \lambda^4} \eta_{j}(\lambda) m_2 (\lambda ,r) |c(\lambda )|^{-2}\, d\lambda\Big|\nonumber\\
&\leq  \frac{C}{1+2^{Nj}(r+(2^{3j}+2^j) |t|)^N} 2^j \int_{1/2}^2 \sum_{k=0}^{N}
\big|\partial_\lambda^k\big(m_2 (2^j\lambda ,r) | c(2^j\lambda)|^{-2}\big)\big|\, d\lambda \nonumber\\
&\leq  \frac{C}{1+2^{Nj}( r+(2^{3j}+2^j)|t|)^N} \sum_{k=0}^{N} 2^j  (1+ r 2^j)^{-\rho} (1+2^j)^{2 (\rho -1)} \nonumber\\
&\qquad \cdot    \big(2^{2j} (1+2^j)^{ -k} + 2^j(1+2^j)^{1- k} +(1+2^j)^{2-k}\big)\nonumber\\
&\leq  \frac{C}{1+2^{Nj}( r+(2^{3j}+2^j)|t|)^N} \begin{cases} 2^{(\rho +1) j}r^{-\rho},&\mbox{if }\ j\geq 0,\ r2^j \geq 1,\\ 2^{j (2\rho +1) }   ,&\mbox{if }\ j\geq 0,\ r2^j \leq 1,
\\ 2^{3j} ,&\mbox{if }\ j\leq 0 . \end{cases}
\end{align}
On the other hand, we also have 
\begin{align*}
&\Big|\int_\R e^{i\Psi(\lambda)}e^{-\varepsilon^2 \lambda^4} \eta_{j}(\lambda) m_2 (\lambda ,r) |c(\lambda) |^{-2}\, d\lambda\Big|\\
&\leq C 2^j \int_{1/2}^2 |m_2 (2^j \lambda ,r)| |c(2^j \lambda) |^{-2}\, d\lambda \\
&\leq C 2^{3j}, 
\end{align*}
if $j\leq 0$.
Thus, letting $J_1$ be the largest integer such that 
$2^{J_1}\le 1/|t|^{1/2}$, the previous estimate yields to 
\[
\sum_{j=J+1}^{J_1} \widetilde{II}_j\leq C/|t|^{3/2},
\]
 if $J\leq 0$. On the other hand, we have, using \eqref{esttilde2}, 
$$\sum_{j=J_1}^0 \frac{C2^{3j}}{( 1+(2^{4j}+2^{2j}) |t|)^N}\leq \frac{C}{|t|^{3/2}} \sum_{j=J_1}^0 2^{(j-J_1)(3-2N)}
\leq C /|t|^{3/2}. $$
Overall, we showed that
\begin{equation}\label{j+1to0}
\sum_{J+1}^0 \widetilde{II}_j\le C/|t|^{3/2}
\end{equation}
if $J\le 0$.  
In case $\beta=0$ we have $2^{4j}$ instead of $(2^{2j} +2^{4j})$ in the estimates above. Hence 
\eqref{j+1to0} holds for $\beta=0$, too, and consequently
\[
\tilde{I}+\sum_{j>J}\widetilde{II}_j \le
C \big(|t|^{-3/2}+|t|^{-N/4}\big)
\] 
if $\beta\ge 0$.
When $|t|$ is very large, we can improve the previous estimates by using the method of \cite{AMPVZ}. Indeed, instead of using a dyadic decomposition, we write
\begin{align*}
I_\varepsilon (t,x)&= \int_{\R} \chi^0_t (\lambda)  e^{-(it+\varepsilon^2) (\lambda^4 +\tilde{\beta} \lambda^2 ) } \Phi_\lambda (r) |c(\lambda)|^{-2}\, d\lambda  \\
&+ \int_{\R} \chi^\infty_t (\lambda)  e^{-(it+\varepsilon^2) (\lambda^4 +\tilde{\beta} \lambda^2 ) } \Phi_\lambda (r) |c(\lambda)|^{-2}\, d\lambda\\
&= I_0 (t,r) +I_\infty (t,r),
\end{align*}
where $\chi_t^0 (\lambda) = \chi (\sqrt{t} |\lambda|)$ is an even cut-off function such that $\chi (\lambda)=1$, if $|\lambda| \leq 1$ and $\chi (\lambda)=0$, if $|\lambda| \geq 2$ and $\chi_t^\infty =1-\chi_t^0$. Using that   $|\Phi_\lambda (r)|\leq |\Phi_0 (r)|\leq C e^{-\rho r}$ and $|c(\lambda)|^{-2}\leq C\lambda^2$, we find that
$$|I_0 (t,r)|\leq |t|^{-3/2}e^{-\rho r}. $$
On the other hand, after integrating by parts $\tilde{M} (=k_0+k_1+k_2+k_3)$ times, we obtain
\begin{align*}
e^{\rho r/2}&|I_\infty (t,r)| \leq C |t|^{-\tilde{M}} \int_{\R} \Big|e^{-it (\lambda^4 +\beta\lambda^2 )}  \left[ \dfrac{\partial}{\partial \lambda} \circ \dfrac{1}{\lambda}\right]^{\tilde{M}} \big(\chi_t^\infty (\lambda ) |c(\lambda)|^{-2} e^{ir}\big)\Big|\, d\lambda\\
&\leq C |t|^{-\tilde{M}} \int_{\R} \sum_{k_0 +k_1+ k_2 +k_3=\tilde{M}} |t|^{k_0 /2} |\lambda|^{-\tilde{M}-k_1}  r^{k_3} \begin{cases} |\lambda|^{2},&\mbox{if }\ |\lambda|\leq 1,\\ |\lambda|^{N-1},&\mbox{if }\ |\lambda|\geq 1  \end{cases} d\lambda \\
&\leq  \mathop{\sum_{k_0 +k_1+ k_2 +k_3=\tilde{M}}}_{k_0\ge 1}
|t|^{-3 /2}  r^{k_3}\\
&+ \sum_{k_1+ k_2 +k_3=\tilde{M}}  |t|^{-\tilde{M}} r^{k_3} \Big[\int_{|t|^{-1/2}\leq |\lambda|\leq 1} |\lambda|^{2-\tilde{M}-k_1}\, d\lambda +\int_{|\lambda|\geq 1} |\lambda|^{N-1 -\tilde{M}-k_1}\,d\lambda \Big]\\
&\leq |t|^{-3/2} (1+r)^{\tilde{M}},
\end{align*}
provided that $\tilde{M}>\max \{N, 3/2\}$. Here we used that 
$|\lambda|\approx |t|^{-1/2}$ if $k_0 \geq 1$, and $4|\lambda|^3 + 2\beta|\lambda| \geq C |\lambda| $ if $\beta>0$. If $\beta=0$, we take $\chi_t^0 (\lambda) = \chi (t^{1/4} |\lambda|)$, then we have
\begin{align*}
e^{\rho r/2}&|I_\infty (t,r)| \leq C |t|^{-\tilde{M}} \int_{\R} \Big|e^{-it (\lambda^4 +\beta\lambda^2 )}  \left[ \dfrac{\partial}{\partial \lambda} \circ \dfrac{1}{\lambda^3}\right]^{\tilde{M}} \big(\chi_t^\infty (\lambda ) |c(\lambda)|^{-2} e^{ir}\big)\Big|\, d\lambda\\
&\leq C |t|^{-\tilde{M}} \int_{\R} \sum_{k_0 +k_1+ k_2 +k_3=\tilde{M}} |t|^{k_0 /2} |\lambda|^{-3 \tilde{M}-k_1  }  r^{k_3} \begin{cases} |\lambda|^{2},&\mbox{if }\ |\lambda|\leq 1,\\  |\lambda|^{N-1},&\mbox{if }\ |\lambda|\geq 1  \end{cases} d\lambda \\
&\leq \mathop{\sum_{k_0 +k_1+ k_2 +k_3=\tilde{M}}}_{k_0\ge 1}
|t|^{-3 /4}  r^{k_3}\\
&+ \sum_{k_1+ k_2 +k_3=\tilde{M}}  |t|^{-\tilde{M}} r^{k_3} \Big[\int_{|t|^{-1/4}\leq |\lambda|\leq 1} |\lambda|^{2-3\tilde{M}-k_1}\, d\lambda +\int_{|\lambda|\geq 1} |\lambda|^{N-1 -3\tilde{M}-k_1}\,d\lambda \Big]\\
&\leq |t|^{-3/4} (1+r)^{\tilde{M}}.
\end{align*}

If $\beta<0$, we only have to consider two extra cases which correspond to the situation where $\partial_\lambda\Psi(\lambda)$ degenerates, i.e. when $r/|t|$ is small and $|\lambda|$ is close to 
$\sqrt{-\beta/2}=\lambda_0$  and when $|r/|t| -\lambda_1|$ is small, where  $\lambda_1 =\sqrt{-\beta/6}$, and $\lambda$ is close to $\lambda_1$. Let $\chi$ be a cut-off function supported in $[-1,1]$
and $\tilde{M}$ a sufficiently large constant. When $r/|t|$ is small, we find, by using the van der Corput lemma see \cite[Step 6]{MR1745182}, 
$$\Big|\int_\R e^{i\Psi (\lambda )} \chi \big(\tilde{M} (\lambda -\lambda_0 ) \big) \big(m_2 (\lambda ,r)|c(\lambda)|^{-2}+m_1(\lambda,r) c(-\lambda)^{-1} \big)\, d\lambda \Big|\leq Ct^{-1/2} (1 +r)^{-\rho}.$$
When $|r/|t| -\lambda_1|$ is small, we find that
$$|\int_\R e^{i\Psi (\lambda )} \chi \big(\tilde{M} (\lambda -\lambda_1 ) \big)  \big(m_2 (\lambda ,r) |c(\lambda)|^{-2}+m_1(\lambda,r) c(-\lambda)^{-1} \big)\, d\lambda  |\leq Ct^{-1/3} (1 +r)^{-\rho} .$$
Notice that, $r\approx t$ in this case, so using the exponential decay in $r$, we find the result.


\end{proof}

Thanks to the previous proposition, 
we are able to estimate the norm of $I_\varepsilon$ in the Lorentz space $L^{q,\theta}(\mathbb{H}^N)$, with $q>2$ and $\theta\geq 1$. Let us recall that the norm of this space is defined by
$$\|f\|_{L^{q,\theta}(\mathbb{H}^N)}= \begin{cases}\big( \int_0^\infty  (s^{1/q} f^\ast (s))^\theta \frac{ds}{s}\big)^{1/\theta},&\mbox{if }\ 1\leq \theta <\infty ,\\ \sup_{s>0} s^{1/q} f^\ast (s),&\mbox{if }\ \theta=\infty , \end{cases}$$
where $f^\ast$ is the decreasing rearrangement of $f$. If $f$ is a positive radial decreasing function in $\mathbb{H}^N$ then $f^\ast = f \circ V^{-1}$ where
$$V(r)=C \int_0^r \sinh^{N-1}s\, ds$$
is the volume of a geodesic ball of radius $r$.
So, using this last fact, we can rewrite the norm as
$$\|f\|_{L^{q,\theta}(\mathbb{H}^N)} =\begin{cases} \big(\int_0^\infty \big(V(r)^{1/q} f(r)\big)^{\theta} \frac{V^\prime (r)}{V(r)}\, dr \big)^{1/\theta}, &\mbox{if }\ 1\leq \theta <\infty ,\\ \sup_{s>0} V(s)^{1/q} f (s),&\mbox{if }\ \theta=\infty . \end{cases}$$
Using the asymptotics of $V$, we see that this norm is equivalent to
$$\|f\|_{L^{q,\theta}(\mathbb{H}^N)} \approx \begin{cases} \big(\int_0^1  f(r)^{\theta} r^{\tfrac{\theta N }{q}-1}\, dr \big)^{1/\theta}+ \big(\int_1^\infty  f(r)^{\theta} e^{t\frac{\theta (N-1) }{q}} \,dr \big)^{1/\theta} , &\mbox{if }\ 1\leq \theta <\infty ,\\ \sup_{0<s<1} s^{N/q} f (s)+\sup_{s\geq 1} e^{\tfrac{N-1}{q}s} f(s),&\mbox{if }\ \theta=\infty . \end{cases}$$

We are now in position to estimate the $L^{q,\theta}(\mathbb{H}^N)$ norm of $v_t=v(t,\cdot)=\lim_{\varepsilon \rightarrow 0}I_\varepsilon (t,.)$. 

\begin{prop} 
Let $q>2$ and $1\leq \theta\leq \infty$. Then 
we have 
\[
\|v_t\|_{L^{q,\theta}(\mathbb{H}^N )}\leq C\begin{cases} |t|^{-N/4},&\mbox{if }\ |t|\leq 1,\\   |t|^{-3/2},&\mbox{if }\ |t|>1 \ and\ \beta>0,\\  |t|^{-3/4},&\mbox{if }\ |t|>1 \ and\ \beta=0.  \end{cases}
\]
\end{prop}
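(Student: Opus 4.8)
The idea is to combine the pointwise bounds on $|I_\varepsilon(t,x)|$ from Proposition~\ref{propestv} with the explicit formula for the Lorentz norm of a positive radial decreasing function derived above, passing to the limit $\varepsilon\to 0$ at the end. First I would record that for fixed $t\neq 0$ the kernel $x\mapsto |I_\varepsilon(t,x)|$ is dominated (uniformly in $\varepsilon$) by a positive, radial, decreasing function of $r=r(x)$: indeed Proposition~\ref{propestv} gives, in every regime, a bound of the shape
\[
|I_\varepsilon(t,x)|\leq C\, a(t)\, r^{(N+5)/6}e^{-(N-1)r/2},
\]
with $a(t)=|t|^{-N/4}$ for $|t|\leq 1$ and $a(t)=|t|^{-3/2}$ (resp. $|t|^{-3/4}$) for $|t|>1$ and $\beta>0$ (resp. $\beta=0$). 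The function $g(r):=r^{(N+5)/6}e^{-(N-1)r/2}$ is not itself monotone near $r=0$, but it is bounded and is decreasing for $r$ large; so I would replace it by $\tilde g(r):=\min\{M,\ g(r)\}$ for a suitable constant $M=\sup g$, which is radial, decreasing, and still satisfies $g\leq \tilde g\leq M\mathbbm 1_{[0,1]}+C e^{-(N-1)r/2}\mathbbm 1_{[1,\infty)}$ up to constants; this suffices because the Lorentz quasi-norm is monotone under pointwise domination.

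Next I would simply plug $\tilde g$ into the equivalent expression for $\|\cdot\|_{L^{q,\theta}(\Hy^N)}$ obtained just before the statement,
\[
\|f\|_{L^{q,\theta}(\Hy^N)}\approx \Big(\int_0^1 f(r)^\theta r^{\theta N/q-1}\,dr\Big)^{1/\theta}+\Big(\int_1^\infty f(r)^\theta e^{\theta(N-1)r/q}\,dr\Big)^{1/\theta},
\]
(with the obvious $\sup$ version when $\theta=\infty$). For the near part, $\int_0^1 \tilde g(r)^\theta r^{\theta N/q-1}\,dr\leq M^\theta\int_0^1 r^{\theta N/q-1}\,dr<\infty$ since $q>2$ makes the exponent $\theta N/q-1$ strictly greater than $-1$ (in fact one only needs it $>-1$, which holds as $N\geq 2$ and $q$ finite; for $q=\infty$ the $\sup$ is trivially finite). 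For the far part, the integrand behaves like $r^{\theta(N+5)/6}e^{-\theta r((N-1)/2-(N-1)/q)}$ and the exponential decays because $q>2$ forces $(N-1)/2-(N-1)/q=(N-1)(1/2-1/q)>0$; hence $\int_1^\infty$ converges, again uniformly in $t$ and $\varepsilon$. Therefore $\|\,|I_\varepsilon(t,\cdot)|\,\|_{L^{q,\theta}(\Hy^N)}\leq C\,a(t)$ with $C$ independent of $\varepsilon$.

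Finally I would let $\varepsilon\to 0$: since $I_\varepsilon(t,\cdot)\to v_t$ (pointwise, or in the sense of distributions as indicated in the text) and the bounds are $\varepsilon$-uniform, Fatou's lemma for the Lorentz quasi-norm — equivalently, lower semicontinuity of $f\mapsto \|f\|_{L^{q,\theta}}$ under a.e.\ convergence — yields $\|v_t\|_{L^{q,\theta}(\Hy^N)}\leq C\,a(t)$, which is exactly the claimed bound. The only mildly delicate points are the reduction to a genuinely radial decreasing majorant (handled by the truncation $\tilde g$ above) and checking that the exponents in both pieces of the norm have the right sign; both reduce to the hypothesis $q>2$, and there is no real obstacle beyond bookkeeping. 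If one prefers to avoid rearrangements altogether, the same conclusion follows directly from $\|f\|_{L^{q,\theta}}\lesssim \|f\|_{L^{q_1,\theta}}+\|f\|_{L^{q_2,\theta}}$ with a crude $L^\infty$ bound near the origin and the explicit exponential integral at infinity.
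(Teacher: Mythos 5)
Your proposal is correct and follows essentially the same route as the paper: apply the pointwise bound of Proposition~\ref{propestv} and observe that the resulting radial majorant $r^{(N+5)/6}e^{-(N-1)r/2}$ has finite $L^{q,\theta}(\Hy^N)$ norm precisely because $q>2$. You are in fact more careful than the paper on two minor points it leaves implicit (replacing the majorant by a genuinely decreasing one before using the rearrangement formula, and passing to the limit $\varepsilon\to 0$ by lower semicontinuity), though note that as literally written $\min\{\sup g,\,g\}=g$; what you mean is the least decreasing majorant, which your subsequent two-piece bound makes clear.
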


\begin{proof}
Using Proposition \ref{propestv}, we see that
$$|v(t,r)|\le C r^{\frac{N+5}{6}} e^{-\frac{N-1}{2}r} \begin{cases}|t|^{-N/4},&\mbox{if }\ |t|\leq 1,\\ |t|^{-3/2},&\mbox{if }\ |t|\geq 1\ and\ \beta>0,\\ |t|^{-3/4},&\mbox{if }\ |t|>1 \ and\ \beta=0.  \end{cases}$$
Taking 
\[
f(r)=r^{\frac{N+5}{6}} e^{-\frac{N-1}{2}r},
\]
 it is easy to see that $\|f\|_{L^{q,\theta}(\mathbb{H}^N)}\leq C$ since $q>2$. This yields the result.
\end{proof}

Recalling that $P=(\Delta_{\Hy^N} +\rho^2)^2  -\beta (\Delta_{\Hy^N} +\rho^2)$, we obtain an $L^p -L^q$ estimate for $e^{itP}$ by using the Kunze-Stein inequality.
\begin{prop}\label{disp}
 For all $2<q,\tilde{q}\leq \infty$, there exists a constant $C>0$ such that
$$\|e^{it P} \|_{L^{\tilde{q}^\prime } (\mathbb{H}^N) \rightarrow L^q  (\mathbb{H}^N)} \leq C\begin{cases} |t|^{-\max ( 1 - \frac{2}{q}, 1 - \frac{2}{\tilde{q}})\frac{N}{4}},&\mbox{ if}\ |t|\leq 1 ,\\ |t|^{-3/2},&\mbox{ if}\ |t|\geq 1\ and\ \beta>0,\\ |t|^{-3/4},&\mbox{ if}\ |t|\geq 1\ and\ \beta=0. \end{cases} $$
\end{prop}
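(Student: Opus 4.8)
The plan is to interpolate between an $L^2\to L^2$ bound and an $L^1\to L^\infty$-type bound, the latter coming from the pointwise kernel estimate together with the Kunze--Stein convolution inequality. First I would recall that $e^{itP}$ is a radial convolution operator: for radial convolution on $\Hy^N$ one has $e^{itP}f = f * v_t$, where $v_t = v(t,\cdot)$ is the kernel estimated in the preceding two propositions, so that $\|v_t\|_{L^{q,\theta}(\Hy^N)}\le C\,\omega(t)$ with $\omega(t)=|t|^{-N/4}$ for $|t|\le 1$ and $\omega(t)=|t|^{-3/2}$ (resp.\ $|t|^{-3/4}$) for $|t|\ge 1$ and $\beta>0$ (resp.\ $\beta=0$). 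By the Plancherel theorem stated in Section~\ref{secprel}, $\|e^{itP}\|_{L^2\to L^2}=1$ since the Fourier multiplier $e^{-it(\lambda^4+\beta\lambda^2)}$ has modulus one.

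Next I would invoke the Kunze--Stein phenomenon in the sharp Lorentz-space form due to Ionescu (and used in \cite{IS2009}): for $2<q<\infty$ there is $C_q$ with
\[
\|f*g\|_{L^q(\Hy^N)}\le C_q\,\|f\|_{L^{q',q'}(\Hy^N)}\,\|g\|_{L^{q/2,q/2}(\Hy^N)}
\]
for radial $g$, or more precisely the bilinear estimate $\|f*g\|_{L^q}\lesssim \|f\|_{L^2}\,\|g\|_{L^{q/2,2}}$ when one factor is radial. Combining this with the bound $\|v_t\|_{L^{q/2,\theta}(\Hy^N)}\le C\omega(t)$ from the second proposition (valid because $q/2>1$, and one checks the relevant Lorentz exponent is allowed) gives the one-sided dispersive estimate
\[
\|e^{itP}f\|_{L^q(\Hy^N)}\le C\,\omega_q(t)\,\|f\|_{L^2(\Hy^N)},
\qquad 2<q\le\infty,
\]
where $\omega_q(t)=|t|^{-(1-2/q)N/4}$ for $|t|\le1$ (the exponent being dictated by scaling in the short-time regime, which must be tracked through the Kunze--Stein inequality since the naive $L^1\to L^\infty$ bound is too lossy on $\Hy^N$) and $\omega_q(t)=|t|^{-3/2}$ resp.\ $|t|^{-3/4}$ for $|t|\ge1$. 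By duality, since $P$ is self-adjoint, $\|e^{itP}\|_{L^{\tilde q'}\to L^2}\le C\,\omega_{\tilde q}(t)$ as well.

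Finally I would compose the two one-sided bounds through the semigroup property $e^{itP}=e^{i(t/2)P}e^{i(t/2)P}$:
\[
\|e^{itP}\|_{L^{\tilde q'}\to L^q}
\le \|e^{i(t/2)P}\|_{L^2\to L^q}\,\|e^{i(t/2)P}\|_{L^{\tilde q'}\to L^2}
\le C\,\omega_q(t/2)\,\omega_{\tilde q}(t/2),
\]
and since $\omega_q(t/2)\approx\omega_q(t)$ this yields exactly the claimed bound: for $|t|\le1$ one gets $|t|^{-(1-2/q)N/4-(1-2/\tilde q)N/4}$, and I would note that the asserted exponent $\max(1-2/q,1-2/\tilde q)N/4$ is obtained by the slightly more careful route of interpolating the trivial $L^2\to L^2$ bound with the single dispersive estimate $L^{\tilde q'}\to L^q$ built directly from Kunze--Stein (rather than splitting symmetrically), so I would actually apply Kunze--Stein once with the two exponents $q$ and $\tilde q$ simultaneously, $\|f*v_t\|_{L^q}\lesssim \|v_t\|_{L^{r,\theta}}\|f\|_{L^{\tilde q'}}$ with $1/r = 1+1/q-1/\tilde q'$, and check that $\|v_t\|_{L^{r,\theta}}\le C\omega(t)$ still holds because $r>2$ in the relevant range; for $|t|\ge1$ the large-time decay of $v_t$ dominates and no scaling loss appears, giving $|t|^{-3/2}$ or $|t|^{-3/4}$ directly. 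The main obstacle is bookkeeping the short-time exponent: one must verify that the Kunze--Stein/Lorentz machinery reproduces the Euclidean-type scaling exponent $(1-2/q)N/4$ rather than a weaker one, which requires the pointwise estimate of Proposition~\ref{propestv} in its full strength (in particular the $|t|^{-N/4}$ short-time rate and the exponential spatial decay that makes the Lorentz norm of $f$ finite for all $q>2$), and checking that the Lorentz index constraints ($q>2$, the second index being $\theta$ or, after convolution, some admissible value) are met throughout.
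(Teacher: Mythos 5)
Your inventory of ingredients is the right one (the pointwise kernel bound, the Lorentz estimate on $v_t$, unitarity on $L^2$, Kunze--Stein, duality, interpolation), but the way you combine them does not yield the stated exponents, and the step you offer as a repair fails. Composing through $L^2$ via $e^{itP}=e^{i(t/2)P}e^{i(t/2)P}$ gives the product $\omega_q(t/2)\,\omega_{\tilde q}(t/2)$, i.e.\ the \emph{sum} $\bigl((1-2/q)+(1-2/\tilde q)\bigr)\tfrac N4$ of the decay rates, which for $|t|\le 1$ is strictly weaker than the claimed $\max$. You notice this, but your replacement --- a single convolution inequality $\|f*v_t\|_{L^q}\lesssim\|v_t\|_{L^{r,\theta}}\|f\|_{L^{\tilde q'}}$ with $1/r=1/q+1/\tilde q$ --- breaks down on two counts. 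The condition $r>2$, which is exactly what makes $\|v_t\|_{L^{r,\theta}}$ finite (the kernel decays like $e^{-(N-1)\rho/2}$ against volume growth $e^{(N-1)\rho}$, so the norm diverges for $r\le 2$), is equivalent to $1/q+1/\tilde q<1/2$ and thus excludes all pairs with $q,\tilde q\le 4$, precisely the range needed for the Strichartz application. And even where $r>2$, the preceding proposition gives only $\|v_t\|_{L^{r,\theta}}\le C|t|^{-N/4}$, uniformly in $r$; no single application of a convolution inequality can improve $N/4$ to $\max(1-2/q,1-2/\tilde q)\tfrac N4$. That gain comes solely from interpolating against $\|e^{itP}\|_{L^2\to L^2}\le 1$, and for the interpolation to land on the target pair $(\tilde q',q)$ with the right exponent the dispersive input must sit at the endpoints $L^1\to L^{q_0}$ and $L^{q_0'}\to L^\infty$ (both with norm $\|v_t\|_{L^{q_0}}\le C|t|^{-N/4}$, by plain Young's inequality and duality), not at $(\tilde q',q)$ itself. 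This three-point interpolation, run over all $q_0>2$, is how the paper handles $|t|\le 1$; Kunze--Stein plays no role there.

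Where Kunze--Stein is genuinely needed is $|t|\ge 1$: plain Young on the diagonal would require $\|v_t\|_{L^{q/2}}$, which is infinite for $q\le 4$, and the sharp form $\|f*g\|_{L^q}\le\|f\|_{L^{q,1}}\|g\|_{L^{q'}}$ of Cowling--Meda--Setti circumvents this by giving $\|e^{itP}\|_{L^{q'}\to L^q}\le\|v_t\|_{L^{q,1}}\le C|t|^{-3/2}$ (resp.\ $|t|^{-3/4}$) for every $q>2$; interpolating this with $L^1\to L^q$ and $L^{q'}\to L^\infty$ then covers all admissible $(\tilde q',q)$ with no loss in the time decay. The bilinear form $\|f*g\|_{L^q}\lesssim\|f\|_{L^2}\|g\|_{L^{q/2,2}}$ you invoke is again restricted to $q>4$ for the kernel norm to be finite, and the one-sided $L^2\to L^q$ bounds it would produce, once composed, would assert a large-time decay $|t|^{-3}$ that nothing in the kernel estimates justifies. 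I would restructure the argument along these lines: Young plus duality plus $L^2$-interpolation for short time, sharp Kunze--Stein plus interpolation for long time.
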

\begin{proof}
By Young's inequality for convolutions and recalling that $L^{p,p}(\mathbb{H}^N)=L^p (\mathbb{H}^N)$, we have, for small $|t|$,
$$\|e^{itP} \|_{L^1  (\mathbb{H}^N) \rightarrow L^q  (\mathbb{H}^N)} \leq \|v_t\|_{L^q  (\mathbb{H}^N)}\leq C|t|^{-N/4},\ q>2  .$$
By duality, this implies that, for small $|t|$,
$$\|e^{itP} \|_{L^{q^\prime} (\mathbb{H}^N) \rightarrow L^\infty  (\mathbb{H}^N)} \leq \|v_t\|_{L^q  (\mathbb{H}^N)}\leq C|t|^{-N/4},\ q>2 .$$
On the other hand, we have
$$ \|e^{itP} \|_{L^2  (\mathbb{H}^N)\rightarrow L^2  (\mathbb{H}^N)} \leq  1.$$
Interpolating between the three previous inequalities, we get the result when $|t|\leq 1$. For large $|t|$, we will need the following sharp Kunze-Stein inequality due to Cowling, Meda and Setti \cite{cowling}, \cite{CoMeSe}
$$\|f\ast g\|_{L^q  (\mathbb{H}^N)}\leq \|f\|_{L^{q,1}  (\mathbb{H}^N)}  \|g\|_{L^{q^\prime}  (\mathbb{H}^N)}$$
 for $q>2$.
Using this inequality, we get that, when $|t|$ is large and $q>2$,
$$ \|e^{itP} \|_{L^{q^\prime}  (\mathbb{H}^N) \rightarrow L^q  (\mathbb{H}^N)} \leq  \|v_t\|_{L^{q,1}}\leq C\begin{cases}|t|^{-3/2},&\mbox{ if}\ \beta>0,\\ |t|^{-3/4},&\mbox{ if}\ \beta=0. \end{cases}$$
We also have
$$\|e^{itP} \|_{L^1   (\mathbb{H}^N)\rightarrow L^q  (\mathbb{H}^N)} \leq \|v_t\|_{L^q}\leq C\begin{cases}|t|^{-3/2},&\mbox{ if}\ \beta>0,\\ |t|^{-3/4},&\mbox{ if}\ \beta=0,\end{cases}$$
and, by duality,
$$\|e^{itP} \|_{L^{q^\prime}  (\mathbb{H}^N) \rightarrow L^\infty  (\mathbb{H}^N)} \leq \|v_t\|_{L^q}\leq C\begin{cases}|t|^{-3/2},&\mbox{ if}\ \beta>0,\\ |t|^{-3/4},&\mbox{ if}\ \beta=0. \end{cases}$$
Interpolating between the three previous inequalities, the proposition follows.
\end{proof}

\section{Strichartz estimates and applications on global existence and scattering}\label{secstri}
In this section, we give the proof of Theorems \ref{introstri} and \ref{thmexintro}. We end it by giving a trapping result in the spirit of \cite{MR3338838}.

We say that $(p,q)$ is an admissible pair if 
$$\left(\frac{1}{p},\frac{1}{q}\right)\in  \left\lbrace\left(0,\frac{1}{2} \right)\times \left(0,\frac{1}{2}\right) \colon \frac{4}{p} +\frac{N}{q} \geq \frac{N}{2} \right\rbrace \cup \left\lbrace\left(0,\frac{1}{2}\right) \right\rbrace.$$
Observe that the set of admissible couples in the hyperbolic space is a lot larger than the corresponding one in the Euclidean space (as for the classical Schr\" odinger equation). Our proof of the Strichartz estimate is quite standard and follows the $TT^\ast$ method of Keel-Tao \cite{keel-tao}.
We do not consider the endpoint cases of the Strichartz estimates but they can be obtained as in \cite{keel-tao}.


\begin{thm}
\label{thmstri} 
Assume that $(p,q)$ and $(\tilde{p},\tilde{q})$ are admissible pairs. Let $\psi$ be a solution to
$$\begin{cases} i\partial_t \psi +P \psi=h\mbox{ in}\ \R\times \Hy^N ,\\ \psi(0,\cdot)=\psi_0 \in L^2 (\Hy^N),\end{cases}$$
on $I=[0,T]$. Assume moreover that $|I|\leq 1$ if $\beta\leq 0$. Then there exists a constant $C>0$ such that
\begin{equation}
\label{stri}
\|\psi\|_{L_t^p (I, L_x^q (\Hy^N))}\leq C (\|\psi_0\|_{L^2(\Hy^N)} +\|h\|_{L_t^{\tilde{p}^\prime } (I,L_x^{\tilde{q}^\prime} (\Hy^N))}).
\end{equation}

\end{thm}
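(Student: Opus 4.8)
The statement is the standard Strichartz estimate for the propagator $e^{itP}$, and by now Proposition~\ref{disp} has supplied all the analytic input we need: a dispersive $L^{\tilde q'}\to L^q$ bound with a time decay that is $|t|^{-\max(1-2/q,1-2/\tilde q)N/4}$ for $|t|\le 1$ and $|t|^{-3/2}$ (resp.\ $|t|^{-3/4}$) for $|t|\ge 1$ when $\beta>0$ (resp.\ $\beta=0$), together with the trivial $L^2\to L^2$ bound of $e^{itP}$. The plan is to run the abstract $TT^*$ argument of Keel--Tao: set $U(t)=e^{itP}$, so that $U(t)$ is unitary on $L^2=\cH$, and feed into their machinery the decay estimate $\|U(t)U(s)^*\|_{L^{q'}\to L^q}=\|U(t-s)\|_{L^{q'}\to L^q}\lesssim |t-s|^{-\mu(q)}$. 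The homogeneous estimate $\|U(t)\psi_0\|_{L^p_tL^q_x}\lesssim\|\psi_0\|_{L^2}$ then follows from the $TT^*$ identity $\|U(t)\psi_0\|_{L^p_tL^q_x}^2=\langle (T^*T)f,f\rangle$ with $Tf=\int U(-s)^*f(s)\,ds$, once one verifies that $T^*T:L^{p'}_tL^{q'}_x\to L^p_tL^q_x$ is bounded, which is exactly a Hardy--Littlewood--Sobolev / real-interpolation bound on the kernel $K(t,s)=U(t)U(s)^*$. The inhomogeneous estimate with the full pair $(\tilde p,\tilde q)\ne(p,q)$ then comes from the Christ--Kiselev lemma applied to the Duhamel term $\int_0^t U(t-s)h(s)\,ds$, away from the double endpoint which we are explicitly not treating.

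The one genuine wrinkle, compared with the clean Euclidean $\sigma$-admissible situation, is that the time decay exponent is \emph{not} the scaling-dictated one uniformly in $t$: for small time the decay rate $\mu(q)=\max(1-2/q,1-2/\tilde q)N/4$ depends on $q$ (and on the companion exponent), while for large time the decay is a fixed $|t|^{-3/2}$ or $|t|^{-3/4}$ that is typically \emph{stronger} than scaling would give, which is precisely why the admissible region is the triangle $\tfrac4p+\tfrac Nq\ge\tfrac N2$ rather than a line. So I would split the time integration into $|t-s|\le 1$ and $|t-s|\ge 1$. On $\{|t-s|\le 1\}$ the estimate is the usual local one: with $\beta\le0$ we have $|I|\le1$ so only this regime occurs, and on a bounded interval the Keel--Tao argument with decay $|t-s|^{-N/4\cdot\max(1-2/q,1-2/\tilde q)}$ gives boundedness of the truncated kernel precisely on the admissible triangle (the inequality $\tfrac4p+\tfrac Nq\ge\tfrac N2$ is what makes the relevant $L^{p'}\to L^p$ bound hold after interpolating the dispersive estimate with the $L^2$ bound). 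On $\{|t-s|\ge1\}$ the kernel is dominated by a constant times $|t-s|^{-3/2}$ (resp.\ $|t-s|^{-3/4}$) times the $L^{q'}\to L^q$ operator obtained by interpolation, and since $3/2>1$ (resp.\ one still integrates against the large admissible range) Young's inequality in the time variable closes the estimate; here is the one place $\beta>0$ is used — for $\beta=0$ the decay $|t|^{-3/4}$ is $<1$, so the long-time piece has to be absorbed using the gain in $q$ rather than pure time integrability, exactly as in the Euclidean-large-data discussion, and one checks $\tfrac4p+\tfrac Nq=\tfrac N2$ endpoints still work because $q>2$ forces $\mu(q)>0$.

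Concretely I would organize the write-up as: (i) state the dispersive bound from Proposition~\ref{disp} and the conservation $\|U(t)\|_{L^2\to L^2}\le1$; (ii) reduce \eqref{stri} to the homogeneous bound $\|U(t)\psi_0\|_{L^p_tL^q_x}\lesssim\|\psi_0\|_{L^2}$ plus the Christ--Kiselev step for the Duhamel integral, noting $(p,q)\ne(2,\infty)$ since we excluded endpoints; (iii) prove the homogeneous bound via $TT^*$, interpolating the $|t|\le1$ and $|t|\ge1$ dispersive estimates with the $L^2$ estimate to get $\|U(t-s)\|_{L^{q'}\to L^q}\lesssim |t-s|^{-\gamma}$ with $\gamma=\tfrac N2(\tfrac12-\tfrac1q)\le 1$ satisfying $\tfrac1p=\gamma-1+\cdots$ — the admissibility condition $\tfrac4p+\tfrac Nq\ge\tfrac N2$ being exactly $\tfrac1p\le \tfrac N4(\tfrac12-\tfrac1q)$, which is what Hardy--Littlewood--Sobolev (for the short-time kernel) and Young (for the long-time kernel, using $\beta>0$ or the $L^2$-interpolation trick when $\beta=0$) require; (iv) dualize and symmetrize to get the full bilinear $(\tilde p,\tilde q)$ form. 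The main obstacle, and the only part needing care rather than citation, is step (iii): keeping track of two different decay rates in the two time regimes and checking that the resulting exponent constraints are \emph{implied by} membership in the admissible triangle (including its boundary $\tfrac4p+\tfrac Nq=\tfrac N2$, where the kernel estimate is borderline and one must interpolate rather than use HLS directly). Everything else — unitarity, Christ--Kiselev, the abstract $TT^*$ duality — is routine and can be cited from \cite{keel-tao}.
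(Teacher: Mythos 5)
Your proposal follows essentially the same route as the paper: the $TT^\ast$ method fed with the dispersive bounds of Proposition~\ref{disp}, a splitting of the kernel into $|t-s|\le 1$ (where Young's inequality together with the admissibility condition $\tfrac4p+\tfrac Nq\ge\tfrac N2$ closes the estimate) and $|t-s|\ge 1$ (where the decay $|t-s|^{-3/2}$, available only for $\beta>0$, gives integrability since $p>2$), with the non-diagonal pairs handled by the standard Keel--Tao/Christ--Kiselev machinery and $(\infty,2)$ by mass conservation. The only cosmetic difference is that your aside about "absorbing" the $\beta=0$ long-time piece is unnecessary, since the hypothesis $|I|\le 1$ for $\beta\le 0$ (which you correctly invoke earlier) removes that regime entirely.
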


\begin{proof}
First, we recall that, using Duhamel's formula, $\psi$ is given by 
$$\psi(t,x)= e^{itP}\psi_0(x) -i\int_0^t e^{i(t-s)P} h(s,x)\,ds.$$
We set
$$Tf= e^{it P}f(x).$$
Its formal adjoint is then given by
$$T^\ast F(x)= \int_{-\infty}^\infty e^{-isP}F(s,x)\, ds.$$
By the $TT^\ast$ method, we know that 
$$\|Tf \|_{L_t^p L_x^q}\leq C \|f\|_{L^2}$$
is equivalent to the boundedness in $L_t^{p^\prime}L_x^{q^\prime}\rightarrow L_{t}^p L_x^q$ of the operator
$$TT^\ast F (t,x)=\int_{-\infty}^\infty e^{i(t-s)P}F(s,x)\, ds.$$
By symmetry, it is sufficient to consider the retarded version of the previous operator
$$ \widetilde{T T^\ast} F(x,t)= \int_0^t e^{i(t-s)P}F(s,x)\, ds. $$
Using the dispersive estimates in Proposition~\ref{disp}, we have that
\begin{align*}
\|TT^\ast F\|_{L_t^p L_x^q}&\leq C \Big\|\int_{|t-s|\geq 1} |t-s|^{-3/2} \|F(s)\|_{L_x^{q^\prime}} \Big\|_{L_t^p}\\
&+C\Big\|\int_{|t-s|\leq 1} |t-s|^{-(1 -\frac{ 2}{q})\frac{N}{4}} \|F(s)\|_{L_x^{q^\prime}} \Big\|_{L_t^p}.
\end{align*}
Let $N_1(s,t)=\one_{ \{|s-t|\leq 1\}}|t-s|^{-1/2 (1/2 - 1/q)N} $. Using Young's inequality, we see that $N_1$ is bounded from $L_s^{p_1}$ to $L_t^{p_2}$ provided that $0\leq 1/p_1 - 1/p_2 \leq 1- (1-\frac{2}{q} )\frac{N}{4}$ and $1<p_1,p_2 <\infty$. In particular, it is bounded from $L_s^{p^\prime}$ to $L_t^{p}$ if $2\leq p<\infty$ and $\frac{1}{p}\geq (1-\frac{2}{q} )\frac{N}{8}$. On the other hand, $N_2 (s,t)=\one_{ \{|s-t|\geq 1\}}|t-s|^{-3/2 } $ is bounded in $L_s^{p^\prime}$ into $L_t^{p}$ provided that $p> 2$. This proves the boundedness of $TT^\ast F$ in $L_t^{p^\prime}L_x^{q^\prime}\rightarrow L_{t}^p L_x^q$. The boundedness in $L_t^{p^\prime}L_x^{q^\prime}\rightarrow L_{t}^{\tilde{p}} L_x^{\tilde{q}}$ can then be obtained as in Section $7$ of \cite{keel-tao}. 
 The case $(p,q)=(\infty ,2)$ is settled by the conservation of the mass.

\end{proof}

Let us point out that to prove the $L^{p^\prime}-L^p$ estimate, for $p>2$, of $N_2$, we use the fact that $v_t$ decays faster than $|t|^{-1}$ when $|t|$ is large.

We consider 
\begin{equation}
\label{4NLSeq}
\begin{cases}i\partial_t \psi +P \psi =F(\psi)\mbox{ in}\ \R \times \Hy^N ,\\ \psi(0,\cdot)=\psi_0\in X(\Hy^N),\end{cases}
\end{equation}
where $X (\Hy^N)= L^2 (\Hy^N)$ or $H^2 (\Hy^N)$, and $F$ is a function satisfying 
\[
|F(\psi)|\leq C |\psi|^\gamma\quad\text{and}\quad 
|F(\psi)-F(\tilde\psi)|\leq C |\psi-\tilde \psi | \big(|\psi|^{\gamma -1}+|\tilde \psi|^{\gamma -1}\big),
\]
 for some $\gamma>1$. Thanks to Theorem \ref{thmstri}, we will be able to obtain some global well-posedness results for small initial data in $L^2 (\Hy^N)$ or $H^2 (\Hy^N)$ by using a standard fixed point approach.

\begin{thm}
\label{thmexl2}
 If $1<\gamma \leq 1+ \tfrac{8}{N}$ and $\beta>0$, then there exists a constant $c>0$ such that if $\|\psi_0\|_{L^2 (\mathbb{H}^N )}\leq c$ then \eqref{4NLSeq} is globally well-posed, i.e. $\psi(t,\cdot)$ exists for all $t\geq 0$. Moreover, if $1<\gamma < 1+\tfrac{8}{N}$, \eqref{4NLSeq} is locally well-posed for arbitrary $L^2$ data, i.e. for any $\psi_0\in L^2 (\mathbb{H}^N)$, there exists $T>0$ such that $\psi(t,\cdot)$ exists for all $|t|< T$.
\end{thm}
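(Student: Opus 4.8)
The plan is to run a standard contraction-mapping (fixed point) argument in a suitably chosen Strichartz space, using the inhomogeneous Strichartz estimate of Theorem \ref{thmstri} as the sole analytic input. First I would fix the admissible exponent pair adapted to the nonlinearity of degree $\gamma$: since $|F(\psi)|\lesssim |\psi|^\gamma$, one wants a pair $(p,q)$ such that $\psi\mapsto |\psi|^{\gamma-1}\psi$ maps $L^p_tL^q_x$ (locally in time) into the dual space $L^{\tilde p'}_tL^{\tilde q'}_x$ of another admissible pair. Choosing $q$ with $\frac1q=\frac{\gamma+1}{2}\cdot\frac1{?}$ appropriately — concretely the natural choice is $q=\gamma+1$ and $(p,q)$ on the admissibility line $\frac4p+\frac Nq=\frac N2$ (or in the interior region when $\gamma<1+8/N$), so that $\gamma q'=q$ and, by Hölder in time on the interval $I=[0,T]$, $\||\psi|^{\gamma-1}\psi\|_{L^{\tilde p'}_t L^{q'}_x}\lesssim T^{\theta}\|\psi\|_{L^p_tL^q_x}^{\gamma}$ for some $\theta\ge 0$, with $\theta>0$ precisely in the subcritical case $\gamma<1+8/N$ and $\theta=0$ in the critical case $\gamma=1+8/N$. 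Here one uses that $q>2$ so $(p,q)$ is genuinely admissible in the hyperbolic sense, which is where the enlarged admissible range (and the hypothesis $\beta>0$, guaranteeing the $|t|^{-3/2}$ long-time decay that makes $N_2$ bounded) is essential.

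Next I would set up the map $\Phi(\psi)(t)=e^{itP}\psi_0-i\int_0^t e^{i(t-s)P}F(\psi(s))\,ds$ on the ball $B_R=\{\psi\in L^p(I,L^q)\cap C(I,L^2):\ \|\psi\|_{L^p_tL^q_x}+\|\psi\|_{L^\infty_tL^2_x}\le R\}$. Applying Theorem \ref{thmstri} with the homogeneous term $\psi_0$ and inhomogeneous term $h=F(\psi)$, together with the Hölder-in-time bound above, gives
\[
\|\Phi(\psi)\|_{L^p_tL^q_x}\le C\|\psi_0\|_{L^2}+C T^{\theta}\|\psi\|_{L^p_tL^q_x}^{\gamma},
\]
and, taking also the $(\infty,2)$ endpoint (conservation of mass / the $L^2$ Strichartz bound), the same estimate for the $L^\infty_tL^2_x$ norm. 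The Lipschitz estimate is handled identically using $|F(\psi)-F(\tilde\psi)|\lesssim |\psi-\tilde\psi|(|\psi|^{\gamma-1}+|\tilde\psi|^{\gamma-1})$ and Hölder, yielding
\[
\|\Phi(\psi)-\Phi(\tilde\psi)\|_{L^p_tL^q_x}\le C T^{\theta}\big(\|\psi\|_{L^p_tL^q_x}^{\gamma-1}+\|\tilde\psi\|_{L^p_tL^q_x}^{\gamma-1}\big)\|\psi-\tilde\psi\|_{L^p_tL^q_x}.
\]
In the subcritical case $\gamma<1+8/N$ (so $\theta>0$), for arbitrary $\psi_0\in L^2$ one fixes $R=2C\|\psi_0\|_{L^2}$ and then chooses $T$ small enough that $CT^{\theta}R^{\gamma-1}\le \tfrac12$, making $\Phi$ a contraction of $B_R$; this gives local well-posedness. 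In the critical case $\gamma=1+8/N$ (so $\theta=0$), one instead exploits that by dominated convergence $\|e^{itP}\psi_0\|_{L^p(I,L^q)}$ can be made $\le \delta$ on all of $I=[0,\infty)$ when $\|\psi_0\|_{L^2}\le c$ is small; then with $R=2\delta$ and the smallness $C(2\delta)^{\gamma-1}\le\tfrac12$, $\Phi$ contracts $B_R$ over the whole time axis, giving a global solution for small data. Standard continuity/persistence arguments upgrade the fixed point to a solution in $C(I,L^2)$ and yield uniqueness, continuous dependence, and the blow-up alternative.

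The main obstacle — really the only non-routine point — is the bookkeeping of exponents: one must verify that an admissible pair $(p,q)$ with $q>2$ exists for which $\gamma q'\le q$ with the right time-integrability gap, i.e. that the constraints $\frac4p+\frac Nq\ge\frac N2$, $2<q<\infty$, $2\le p$, and the scaling balance forced by the nonlinearity are simultaneously satisfiable for all $1<\gamma\le 1+8/N$; this is exactly where the threshold $1+8/N$ enters (it is the $L^2$-critical exponent for the biharmonic flow, $\sigma=4/N$ in the notation $\gamma=2\sigma+1$), and where one must be careful to stay strictly inside the admissible region in the subcritical case so that $\theta>0$. The hyperbolic Strichartz estimate does the rest essentially for free, in contrast to the Euclidean setting, because the long-time dispersive decay $|t|^{-3/2}$ (valid here since $\beta>0$) makes the retarded operator bounded without any restriction forcing $T\le 1$.
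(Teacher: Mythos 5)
Your overall strategy is exactly the paper's: write $\psi$ via Duhamel, and run a contraction in $X=C(I;L^2)\cap L^p(I;L^q)$ using Theorem \ref{thmstri}, with a power of $T$ from H\"older in time for the subcritical local theory. There is, however, a genuine gap in how you split the cases. The theorem asserts global well-posedness for small $L^2$ data for the \emph{entire} range $1<\gamma\le 1+\tfrac 8N$, but your argument only produces a global solution at the critical exponent $\gamma=1+\tfrac 8N$, where your $\theta=0$. For $1<\gamma<1+\tfrac 8N$ you only obtain local well-posedness on $[-T,T]$ with $T$ depending on $\|\psi_0\|_{L^2}$, and this cannot be iterated to a global solution: $F$ is a general nonlinearity satisfying only the growth bounds, so there is no conserved quantity controlling $\|\psi(t)\|_{L^2}$ uniformly in $t$. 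The culprit is your assertion that ``$\theta>0$ precisely in the subcritical case'': that is a consequence of pinning $(p,q)$ to the admissibility line, not a feature of the problem.

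The paper avoids this by choosing the exponents with \emph{exact} matching, $p=\gamma\tilde p'$ and $q=\gamma\tilde q'$, so that $\|F(\varphi)\|_{L^{\tilde p'}_tL^{\tilde q'}_x}\le C\|\varphi\|^{\gamma}_{L^{p}_tL^{q}_x}$ with no factor of $T$; the resulting constraint
\[
\frac12-\frac1q\ \le\ \frac{4}{Np}\ \le\ \frac1\gamma\Bigl(\frac12+\frac4N\Bigr)-\frac1q ,\qquad p,q,\tilde p,\tilde q>2,
\]
is solvable exactly when $1<\gamma\le 1+\tfrac 8N$, thanks to the enlarged hyperbolic admissible set (the single choice $p=q=\tilde p=\tilde q=\gamma+1$ already works throughout this range). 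With $\theta=0$ the small-data contraction then closes on all of $\R$ for every $\gamma$ in the range, which is the missing piece; the $T^{\theta}$ trick is reserved for the local theory with arbitrary data, where one deliberately takes $(1-\tilde\lambda)p=\gamma\tilde p'$ to create the gap. A minor further remark: in your critical-case argument no dominated convergence is needed to make $\|e^{itP}\psi_0\|_{L^p(\R;L^q)}$ small for small data; the homogeneous Strichartz bound $\le C\|\psi_0\|_{L^2}$ already does this, and the paper simply absorbs the linear term as $C\delta$ in the self-map estimate. Your identification of where $\beta>0$ enters (the $|t|^{-3/2}$ long-time decay making the retarded kernel bounded without restricting $|I|\le 1$) is correct.
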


\begin{proof}
We begin by proving the global well-posedness for small initial data when $\beta>0$. Let $\psi=\Phi(\varphi)$ be the solution to 
\begin{equation*}
\begin{cases}i\partial_t \psi(t,x) +P \psi(t,x) =F(\varphi(t,x))\mbox{ in}\ \R\times \Hy^N ,\\ \psi(0,\cdot)=\psi_0 \in L^2 (\Hy^N).\end{cases}
\end{equation*}

By Duhamel's formula, we have
$$\psi(t,x)=e^{itP}\psi_0(x) -i\int_0^t e^{i(t-s)P} F(\varphi (s,x))\,ds.$$
Using Strichartz estimate \eqref{stri}, we get
$$\|\psi \|_{L_t^\infty  L_x^2  } +\|\psi \|_{L_t^p  L_x^q } \leq C \|\psi_0\|_{L_x^2 (\Hy^N)} +C \|F(\varphi)\|_{L_t^{\tilde{p}^\prime}  L_x^{\tilde{q}^\prime}},$$
for admissible pairs $(p,q)$ and $(\tilde{p},\tilde{q})$,  i.e.
$$p,\tilde{p}, q,\tilde{q}> 2,\ \dfrac{4}{Np}\geq \dfrac{1}{2} - \dfrac{1}{q},\ \dfrac{4}{N\tilde{p}}\geq \dfrac{1}{2} - \dfrac{1}{\tilde{q}}.$$
By the assumption on $F$, we have
$$\|F(\varphi )\|_{L_t^{\tilde{p}^\prime} L_x^{\tilde{q}^\prime}} \leq C \|\varphi\|^\gamma_{L_t^{\gamma\tilde{p}^\prime} L_x^{\gamma\tilde{q}^\prime}}. $$
We impose that $p=\gamma \tilde{p}^\prime$ and $q=\gamma \tilde{q}^\prime$, i.e. $\tilde{p} = \tfrac{p}{p-\gamma}$ and $\tilde{q} = \tfrac{q}{q-\gamma}$. In order for $(p,q)$ and $(\tilde{p},\tilde{q})$ to be admissible, we need that $p>2 $, $q>2$ and
$$\dfrac{1}{2} -\frac{1}{q} \leq \dfrac{4}{Np}\leq \dfrac{1}{\gamma} (\dfrac{1}{2}+\dfrac{4}{N})  -\dfrac{1}{q}. $$
So we can find $p,q,\tilde{p},\tilde{q}$ satisfying all the previous conditions provided that $1<\gamma \leq 1+8/N$. We have proved that $\Phi$ is a self-map of a Banach space $X=C(\R ;L^2 (\Hy^N))\cap L^p (\R ; L^q (\Hy^N))$ 
with the norm
$$\|\psi\|_X =\|\psi\|_{L_t^\infty L_x^2 } +\|\psi\|_{L_t^p L_x^q} .$$ 
We want to show that $\Phi$ is a contraction in the ball
$$X_\varepsilon = \{\psi\in X\colon \|\psi\|_X \leq \varepsilon \},$$
provided that $\varepsilon>0$ and $\|\psi_0\|_{L^2 (\Hy^N)}$ are small enough. Let $\varphi,\tilde{\varphi} \in X$. We set $\psi=\Phi(\varphi)$ and $\tilde{\psi}=\Phi (\tilde{\varphi})$. Using \eqref{stri} and our assumption on $F$, we obtain
$$\|\psi-\tilde{\psi}\|_X \leq C  \|F(\varphi)- F(\tilde{\varphi})\|_{L_t^{\tilde{p}^\prime} L_x^{\tilde{q}^\prime}}\leq C \|\varphi-\tilde{\varphi}\|_{L_t^p L_x^q} \big(\|\varphi\|^{\gamma-1}_{L_t^p L_x^q}+\|\tilde{\varphi}\|^{\gamma-1}_{L_t^p L_x^q} \big).$$
So, we deduce that
$$\|\psi-\tilde{\psi}\|_X \leq C \|\varphi-\tilde{\varphi}\|_X \big(\|\varphi\|_X^{\gamma -1} +\|\tilde{\varphi}\|_X^{\gamma -1} \big).$$
If we assume that $\|\varphi\|_X , \|\tilde{\varphi}\|_X \leq \varepsilon$ and $\|\psi_0\|_{L^2 (\Hy^N)}\leq \delta$, then we get
$$\|\psi\|_X , \|\tilde{\psi}\|_X\leq C\delta +C \varepsilon^\gamma,\quad\text{and}\quad \|\psi-\tilde{\psi}\|_X \leq 2C \varepsilon^{\gamma -1} \|\varphi- \tilde{\varphi}\|_X .$$
Therefore, choosing $C \varepsilon^{\gamma -1}\leq \tfrac{1}{4}$ and $C\delta \leq \tfrac{3}{4} \varepsilon$, we obtain
$$\|\psi\|_X, \|\tilde{\psi}\|_X \leq \varepsilon ,\quad\text{and}\quad \|\psi-\tilde{\psi}\|_X \leq \frac{1}{2} \|\varphi-\tilde{\varphi}\|_X .$$
This concludes the proof of the global well-posedness for small $L^2$ initial data. Next, we deal with an arbitrary initial data in $L^2$.

When $1<\gamma<1+\tfrac{8}{N}$, we restrict to a small time interval $I=[-T,T]$. We proceed as above except that we define $(1-\tilde{\lambda}) p=\gamma \tilde{p}^\prime$, where $\tilde{\lambda}>0$ is small enough. So, we get
$$\|F(\varphi)\|_{L_t^{\tilde{p}^\prime} L_x^{\tilde{q}^\prime}} \leq C \|\varphi\|^\gamma_{L_t^{(1-\tilde{\lambda}) p} L_x^{q}}. $$ 
Using H\"older's inequality in time, we obtain
$$\|\psi\|_X \leq C \|\psi_0\|_{L^2(\Hy^N)} +CT^\lambda \|\varphi\|_X^\gamma,$$
for some $\lambda >0$, and
$$\|\psi-\tilde{\psi}\|_X \leq C T^\lambda \big(\|\varphi\|_X^{\gamma -1}+ \|\tilde{\varphi}\|_X^{\gamma -1}\big) \|\varphi-\tilde{\varphi}\|_X.$$
So $\Phi$ is a contraction in the ball
$$X_R = \{\psi\in X \colon \|\psi\|_X \leq R  \},$$
if $R$ is large enough and $T$ small enough. 
\end{proof}

\begin{thm}
\label{thmexh2}
Let $1<\gamma \leq \tfrac{N+4}{(N-4)_+}$ and $\beta >0$, then \eqref{4NLSeq} is globally well-posed for small $H^2$ data. Moreover, if $1<\gamma <\tfrac{N+4}{(N-4)_+}$, then \eqref{4NLSeq} is locally well-posed for arbitrary $H^2$ data.
\end{thm}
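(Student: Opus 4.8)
The plan is to run the same contraction-mapping argument as in the proof of Theorem~\ref{thmexl2}, but now at the level of the $H^2$ norm. The key algebraic fact is that $P=(\Delta_{\Hy^N}+\rho^2)^2-\beta(\Delta_{\Hy^N}+\rho^2)$ commutes with $L:=-(\Delta_{\Hy^N}+\rho^2)$, and, since the bottom of the spectrum of $L$ is $0$, the norm $\|f\|_{H^2(\Hy^N)}$ is equivalent to $\|f\|_{L^2(\Hy^N)}+\|Lf\|_{L^2(\Hy^N)}$. Hence, if $\psi$ solves \eqref{4NLSeq}, then $L\psi$ solves $i\partial_t(L\psi)+P(L\psi)=LF(\psi)$ with data $L\psi_0$, and applying the Strichartz estimate of Theorem~\ref{thmstri} simultaneously to $\psi$ and $L\psi$ yields, for admissible pairs $(p,q)$ and $(\tilde p,\tilde q)$,
\[
\|\psi\|_{L_t^\infty H_x^2}+\|\psi\|_{L_t^p L_x^q}+\|L\psi\|_{L_t^p L_x^q}\le C\|\psi_0\|_{H^2(\Hy^N)}+C\|F(\psi)\|_{L_t^{\tilde p'}L_x^{\tilde q'}}+C\|LF(\psi)\|_{L_t^{\tilde p'}L_x^{\tilde q'}}.
\]

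Next I would estimate the two nonlinear terms. Writing $F(\psi)=\lambda|\psi|^{2\sigma}\psi$ and $\gamma=2\sigma+1$, a Leibniz expansion gives the pointwise bound $|LF(\psi)|\lesssim|\psi|^{\gamma-1}|L\psi|+|\psi|^{\gamma-2}|\nabla\psi|^2$ when $\gamma\ge2$, and for $1<\gamma<2$ one invokes a \emph{fractional chain rule} in the spirit of Christ--Weinstein to bound the two-derivative contribution by $\||\psi|^{\gamma-1}\langle L\rangle\psi\|$-type quantities. Combining Hölder's inequality in space with the Sobolev embedding $H^2(\Hy^N)\hookrightarrow L^a(\Hy^N)$ for the appropriate $a$ (and a Gagliardo--Nirenberg interpolation for the $|\nabla\psi|^2$ term), one controls $\|F(\psi)\|_{L_x^{\tilde q'}}+\|LF(\psi)\|_{L_x^{\tilde q'}}$ by $\|\psi\|_{L_x^{b}}^{\gamma-1}$ times the $W^{2,q}_x$-quantities on the left-hand side above, where $b$ is chosen so that both $(p,q)$ and $(\tilde p,\tilde q)$ remain admissible. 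Imposing $p=\gamma\tilde p'$ and $q=\gamma\tilde q'$ as in Theorem~\ref{thmexl2} reduces everything to an inequality on $(1/p,1/q)$ which is solvable inside the hyperbolic admissibility region precisely when $1<\gamma\le\frac{N+4}{(N-4)_+}$, the endpoint being the $\dot H^2$-critical power $1+\tfrac{8}{N-4}$; this is exactly where the hypothesis on $\gamma$ is used.

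With these a priori bounds, the map $\Phi(\varphi)=$ (solution of $i\partial_t\psi+P\psi=F(\varphi)$, $\psi(0)=\psi_0$) is shown to be a contraction on a small ball of $X=C(\R;H^2(\Hy^N))\cap\{\psi:\psi,L\psi\in L_t^pL_x^q\}$ when $\|\psi_0\|_{H^2}$ is small; the difference estimate uses $|F(\psi)-F(\tilde\psi)|\lesssim|\psi-\tilde\psi|(|\psi|^{\gamma-1}+|\tilde\psi|^{\gamma-1})$ together with its once-differentiated analogue (estimating the difference, as usual, in a norm one derivative weaker than the a priori space to avoid the non-smoothness of $F$). As in Theorem~\ref{thmexl2}, $\beta>0$ is needed here so that the long-time decay $|t|^{-3/2}$ of Proposition~\ref{disp} beats $|t|^{-1}$ and makes the retarded kernels $N_1,N_2$ bounded on all of $\R$. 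For the local well-posedness with arbitrary $H^2$ data when $\gamma<\frac{N+4}{(N-4)_+}$, one works instead on a bounded interval $I=[-T,T]$ (with $|I|\le1$ if $\beta\le0$), introduces a small loss $(1-\tilde\lambda)p=\gamma\tilde p'$ exactly as in the proof of Theorem~\ref{thmexl2}, and gains a factor $T^\lambda$ from Hölder in time, so that $\Phi$ is a contraction on a large ball for $T$ small.

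The main obstacle is the second-order nonlinear estimate: when $2\sigma=\gamma-1$ is not an even integer the nonlinearity is not smooth, so one must have a Leibniz/fractional chain rule valid on $\Hy^N$ and then check that the resulting Hölder exponents are genuinely compatible with an admissible pair in the \emph{restricted} hyperbolic admissibility triangle. This bookkeeping — not any new analytic idea — is what pins down the threshold $\gamma\le\frac{N+4}{(N-4)_+}$.
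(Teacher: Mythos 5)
Your proposal follows essentially the same route as the paper: apply a second-order operator commuting with $P$ to the equation (the paper uses $-\Delta_{\Hy^N}$, you use $L=-(\Delta_{\Hy^N}+\rho^2)$, which is equivalent), run the Strichartz/contraction scheme of Theorem~\ref{thmexl2} at the $H^2$ level, and perform the difference estimate in the weaker norm $L^p_tL^q_x$ to handle the non-smoothness of $F$ (the paper then upgrades the $Y$-limit to a fixed point of $X$ via reflexivity plus uniqueness, a step you should make explicit). One correction to your bookkeeping: you cannot impose $q=\gamma\tilde{q}^\prime$ ``as in Theorem~\ref{thmexl2}'' --- that relation reproduces exactly the $L^2$-level admissibility constraint and caps the range at $\gamma\le 1+\tfrac{8}{N}$, not $\tfrac{N+4}{(N-4)_+}$. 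The paper keeps only $p=\gamma\tilde{p}^\prime$ and uses the Sobolev embedding to relax the spatial relation to $\tfrac{1}{\tilde{q}^\prime}\ge\tfrac{\gamma}{q}-\tfrac{2(\gamma-1)}{N}$; the resulting extra term $\tfrac{2(\gamma-1)}{\gamma N}$ in the admissibility inequality is precisely what moves the threshold from $1+\tfrac{8}{N}$ to $\tfrac{N+4}{(N-4)_+}$. Since you do invoke the embedding $H^{2,q}\hookrightarrow L^b$ with $b>q$ elsewhere in your estimate, this is an internal inconsistency rather than a missing idea, but as literally written the exponent count would not reach the claimed range.
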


\begin{proof}
As previously, we begin by showing the global well-posedness for small initial data when $\beta >0$. We apply $-\Delta_{\Hy^N}$ to our equation. Then the Strichartz estimate gives, for any admissible pairs $(p,q)$ and $(\tilde{p},\tilde{q})$,
$$\|\psi\|_{L_t^\infty H_x^2} +\|\psi \|_{L_t^p H_x^{2,q}}\leq C \|\psi_0 \|_{H^2_x (\Hy^N)}+C \|F(\varphi)\|_{L_t^{\tilde{p}^\prime} H_x^{2,\tilde{q}^\prime}}.  $$
On the other hand, we have
$$\|\Delta F(\varphi) \|_{L_t^{\tilde{p}^\prime} L_x^{\tilde{q}^\prime}} \leq C \|\varphi \|^\gamma_{L_t^p H_x^{2,q}},$$
and, with the Sobolev inequality,
$$ \| F(\varphi) \|_{L_t^{\tilde{p}^\prime} L_x^{\tilde{q}^\prime}} \leq C \|\varphi \|^\gamma_{L_t^p H_x^{2,q}},$$
provided that $p=\gamma \tilde{p}^\prime$ and $\frac{1}{\tilde{q}^\prime}\geq \frac{\gamma}{q} - \frac{2(\gamma -1)}{N}$. Combining the previous inequalities, we obtain that
$$\|\psi\|_{L_t^\infty H_x^2} +\|\psi \|_{L_t^p H_x^{2,q}}\leq C \|\psi_0\|_{H^2_x (\Hy^N)}+C \|\varphi\|^\gamma_{L_t^p H_x^{2,q}}.$$
We can choose $p,\tilde{p},q,\tilde{q}$ as above since
$$\dfrac{1}{2}- \dfrac{1}{q} \leq \dfrac{4}{Np}\leq \dfrac{1}{\gamma} (\dfrac{4}{N}+\dfrac{1}{2}) - \dfrac{1}{q} +\dfrac{2(\gamma -1)}{\gamma N}, $$
provided that $\gamma \leq \tfrac{N+4}{(N-4)_+}$. So we proved that $\Phi$ is a self-map of the Banach space $X=C(\R ;H^2 (\Hy^N))\cap L^p (\R ; H^{2,q} (\Hy^N))$ 
with the norm 
$$\|\psi\|_X =\|\psi\|_{L_t^\infty H_x^2 } +\|\psi\|_{L_t^p H_x^{2,q}} .$$ 
We want to show that $\Phi$ is a contraction in a ball
$$X_\varepsilon = \{\psi\in X\colon \|\psi\|_X \leq \varepsilon \},$$
provided that $\varepsilon>0$ and $\|\psi_0\|_{H^2 (\Hy^N)}$ are small enough. Let $\varphi,\tilde{\varphi} \in X_\varepsilon$. We set $\psi=\Phi(\varphi)$ and $\tilde{\psi}=\Phi (\tilde{\varphi})$. As previously, we obtain
\begin{align*}
\|\psi-\tilde{\psi}\|_X &\leq C  \|F(\varphi)- F(\tilde{\varphi})\|_{L_t^{\tilde{p}^\prime} L_x^{\tilde{q}^\prime}}\\
&\leq C \|\varphi-\tilde{\varphi}\|_{L_t^p H_x^{2,q}} \big(\|\varphi\|^{\gamma-1}_{L_t^p H_x^{2,q}}+\|\tilde{\varphi}\|^{\gamma-1}_{L_t^p L_x^q} \big)\\
&\leq 2C \varepsilon^{\gamma -1} \|\varphi-\tilde{\varphi}\|_{L_t^p H_x^{2,q}}.
\end{align*}
So, if $\varepsilon$ is small enough, $\Phi$ is a contraction in $X_\varepsilon$ for the norm inherited from $Y=L^p (\R; L^q (\Hy^N)$. We deduce from this fact the uniqueness of the possible fixed point of $\Phi$ in $X_\varepsilon$. Concerning the existence, let $\psi_0 \in X_\varepsilon$. We define $\psi_j=\Phi^j (\psi_0)$. We see that $\psi_j$ converges to some fixed point $\psi$ in the closure of $X_\varepsilon$ in $Y$. On the other hand, since $X$ is reflexive and separable, $\psi_j$ weakly converges in $X_\varepsilon$ to some $\tilde{\psi}$. But by the uniqueness, we deduce that $\psi=\tilde{\psi}$. This concludes the proof of the global well-posedness for small $H^2$ initial data. Concerning the local well-posedness for arbitrary initial data in $H^2$ when $1<\gamma<\frac{N+4}{(N-4)_+}$, we restrict to a small time interval $I=[-T,T]$. We proceed as previously, using that
$$\|\psi\|_X \leq C \|\psi_0\|_{H^2 (\Hy^N)} +CT^\lambda \|\varphi\|_X^\gamma,$$
for some $\lambda >0$, and
$$\|\psi-\tilde{\psi}\|_Y \leq C T^\lambda \big(\|\varphi\|_X^{\gamma -1}+ \|\tilde{\varphi}\|_X^{\gamma -1}\big) \|\varphi-\tilde{\varphi}\|_Y,$$
where $X= C(I; H^2 (\Hy^N)) \cap L^p (I; H^{2,q} (\Hy^N))$ and $Y=L^p (I; L^q (\Hy^N))$.

\end{proof}

Another consequence of our Strichartz estimate are the following scattering results in $L^2 (\Hy^N)$ and $H^2 (\Hy^N)$.

\begin{thm}
Let 
$\beta >0$. Assume that $1<\gamma \leq 1+\tfrac{8}{N}$. Let $\psi$ be a global solution with small $L^2$ data of \eqref{4NLSeq} obtained in Theorem \ref{thmexl2}. Then $\psi$ has the following scattering property: there exist $\psi_\pm \in L^2 (\Hy^N)$ such that
$$\|\psi(t)- e^{itP}\psi_\pm \|_{L^2 (\Hy^N)} \rightarrow 0\ as\ t\rightarrow \pm \infty.$$
\end{thm}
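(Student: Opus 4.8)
The plan is to prove scattering by the standard argument that the Duhamel term converges in $L^2(\Hy^N)$ as $t\to\pm\infty$. Since $\psi$ is the global small-data solution constructed in Theorem~\ref{thmexl2}, we have $\psi\in X=C(\R;L^2(\Hy^N))\cap L^p(\R;L^q(\Hy^N))$ with $\|\psi\|_X\le\varepsilon$ small, where $(p,q)$ is the admissible pair chosen there with $p=\gamma\tilde p'$, $q=\gamma\tilde q'$. By Duhamel's formula,
\[
\psi(t)=e^{itP}\psi_0-i\int_0^t e^{i(t-s)P}F(\psi(s))\,ds,
\]
so the natural candidates for the scattering states are
\[
\psi_\pm=\psi_0-i\int_0^{\pm\infty} e^{-isP}F(\psi(s))\,ds,
\]
and the point is to show these integrals converge in $L^2(\Hy^N)$ and that $\|\psi(t)-e^{itP}\psi_\pm\|_{L^2}\to0$.

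Concretely, since $e^{-itP}$ is unitary on $L^2(\Hy^N)$,
\[
\big\|\psi(t)-e^{itP}\psi_\pm\big\|_{L^2}
=\Big\|\int_t^{\pm\infty}e^{-isP}F(\psi(s))\,ds\Big\|_{L^2}.
\]
First I would show that for $t_1<t_2$ the tail $\int_{t_1}^{t_2}e^{-isP}F(\psi(s))\,ds$ is small in $L^2$ uniformly, by the dual Strichartz (inhomogeneous) estimate: applying Theorem~\ref{thmstri} with the endpoint pair $(\infty,2)$ on the left and the admissible pair $(\tilde p,\tilde q)$ gives
\[
\Big\|\int_{t_1}^{t_2}e^{-isP}F(\psi(s))\,ds\Big\|_{L^2(\Hy^N)}
\le C\|F(\psi)\|_{L_t^{\tilde p'}([t_1,t_2],L_x^{\tilde q'})}
\le C\|\psi\|_{L_t^{p}([t_1,t_2],L_x^{q})}^{\gamma},
\]
where the last step uses the hypothesis $|F(\psi)|\le C|\psi|^\gamma$ together with $\gamma\tilde p'=p$, $\gamma\tilde q'=q$, exactly as in the proof of Theorem~\ref{thmexl2}. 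Since $\psi\in L^p(\R;L^q)$, the right-hand side tends to $0$ as $t_1,t_2\to+\infty$ (resp.\ $\to-\infty$); hence $\int_0^t e^{-isP}F(\psi(s))\,ds$ is Cauchy in $L^2$, so $\psi_\pm$ is well defined, and the same estimate with $t_1=t$, $t_2=\pm\infty$ gives $\|\psi(t)-e^{itP}\psi_\pm\|_{L^2}\le C\|\psi\|_{L_t^p([t,\pm\infty),L_x^q)}^\gamma\to0$.

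The only genuinely new input beyond the fixed-point argument is that the global solution actually lies in $L^p(\R;L^q(\Hy^N))$ with finite norm (not merely on compact time intervals) — but this is already guaranteed by the construction in Theorem~\ref{thmexl2}, where the contraction is set up directly in the space $X=C(\R;L^2)\cap L^p(\R;L^q)$ with the global-in-time norm. So there is no real obstacle here; the main (mild) point to be careful about is that the admissibility constraints allow one to simultaneously take the left pair to be $(\infty,2)$ for the $L^2$ estimate and to absorb $F(\psi)$ into the $L^p_tL^q_x$ norm of $\psi$, which holds precisely in the range $1<\gamma\le 1+8/N$ already identified. The argument for $H^2$ scattering (the next theorem) is identical after applying $-\Delta_{\Hy^N}$ and using the Sobolev embedding as in Theorem~\ref{thmexh2}.
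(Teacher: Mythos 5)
Your proposal is correct and follows essentially the same route as the paper: write $e^{-itP}\psi(t)$ via Duhamel, verify the Cauchy property in $L^2(\Hy^N)$ by applying the inhomogeneous Strichartz estimate with the pair $(\infty,2)$ on the left and $(\tilde p,\tilde q)$ on the right, bound $\|F(\psi)\|_{L_t^{\tilde p'}L_x^{\tilde q'}}$ by $\|\psi\|_{L_t^pL_x^q}^\gamma$ using $p=\gamma\tilde p'$, $q=\gamma\tilde q'$, and conclude from the global finiteness of $\|\psi\|_{L^p(\R;L^q)}$ furnished by Theorem~\ref{thmexl2}. Your write-up is in fact slightly more explicit than the paper's (which phrases the same computation as a Cauchy criterion for $z(t)=e^{-itP}\psi(t)$), but the substance is identical.
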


\begin{proof}
The scattering follows from the following Cauchy criterion: if 
\[
\|z(t_1 ,\cdot)-z(t_2 ,\cdot)\|_{L^2 (\Hy^N)} \rightarrow 0
\]
as $t_i \rightarrow \infty$,
 then there exists $z_+ \in L^2 (\Hy^N)$ such that $\|z(t,\cdot)-z_+(\cdot)\|_{L^2 (\Hy^N)}\rightarrow 0$ as $t\rightarrow \infty$.
We apply this criterion for $z(t,x)=e^{-itP}\psi (t,x)$. Using the Strichartz estimate, we have
\begin{align*}\Big\|e^{-it_2 P} \psi (t_2 ,\cdot) - e^{-it_1 P}\psi (t_1 ,\cdot)\Big\|_{L^2 (\Hy^N)} &= \Big\|\int_{t_1}^{t_2} e^{-is P} F(\psi(s,\cdot))\, ds\Big\|_{L^2 (\Hy^N)}\\
&\leq \big\|\psi\big\|_{L^p ([t_1,t_2] ;L^q (\Hy^n))}^\gamma .
\end{align*}
Since $u\in L^p (\R ;L^q (\Hy^n))$, the last term vanishes when $t_1\leq t_2$ tend both to $\pm\infty$.
\end{proof}
With the same proof but using Theorem \ref{thmexh2} instead of Theorem \ref{thmexl2}, we obtain the following:
\begin{thm}
Assume that $1<\gamma \leq \frac{N+4}{(N-4)_+}$. Let $\psi$ be a global solution with small $H^2$ data of \eqref{4NLSeq} obtained in Theorem \ref{thmexh2}. Then $\psi$ has the following scattering property: there exist $\psi_\pm \in H^2 (\Hy^N)$ such that
$$\|\psi(t)- e^{itP}\psi_\pm \|_{H^2 (\Hy^N)} \rightarrow 0\ as\ t\rightarrow \pm \infty.$$
\end{thm}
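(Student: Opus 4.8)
The plan is to follow the proof of the $L^2$ scattering statement above verbatim, replacing the $L^2(\Hy^N)$-norm by the $H^2(\Hy^N)$-norm throughout and invoking the $H^2$-level estimates from the proof of Theorem~\ref{thmexh2} wherever the earlier argument used the $L^2$-level ones from Theorem~\ref{thmexl2}. The backbone is the same Cauchy criterion: if a map $z\colon[0,\infty)\to H^2(\Hy^N)$ satisfies $\|z(t_1,\cdot)-z(t_2,\cdot)\|_{H^2(\Hy^N)}\to 0$ as $t_1,t_2\to\infty$, then, $H^2(\Hy^N)$ being complete, there is $z_+\in H^2(\Hy^N)$ with $\|z(t,\cdot)-z_+\|_{H^2(\Hy^N)}\to 0$ as $t\to\infty$, and symmetrically for $t\to-\infty$.

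First I would apply this to $z(t,x)=e^{-itP}\psi(t,x)$, where $\psi$ is the global small-data solution furnished by Theorem~\ref{thmexh2}; recall from its proof that $\psi\in L^p(\R;H^{2,q}(\Hy^N))$ with small norm, for the admissible pair $(p,q)$ and the conjugate exponents $\tilde p',\tilde q'$ fixed there. Since $P$ is a function of $\Delta_{\Hy^N}$, the propagator $e^{-itP}$ commutes with $-\Delta_{\Hy^N}$ and is unitary on $L^2(\Hy^N)$, hence an isometry on $H^2(\Hy^N)$; so by Duhamel's formula
\[
\big\|z(t_2,\cdot)-z(t_1,\cdot)\big\|_{H^2(\Hy^N)}
=\Big\|\int_{t_1}^{t_2}e^{-isP}F(\psi(s,\cdot))\,ds\Big\|_{H^2(\Hy^N)}.
\]

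Next I would bound the right-hand side exactly as in the proof of Theorem~\ref{thmexh2}: acting by $-\Delta_{\Hy^N}$ and using the inhomogeneous Strichartz estimate of Theorem~\ref{thmstri}, together with the nonlinear estimate $\|\Delta_{\Hy^N}F(\psi)\|_{L_t^{\tilde p'}([t_1,t_2];L_x^{\tilde q'})}\le C\|\psi\|_{L_t^p([t_1,t_2];H_x^{2,q})}^{\gamma}$ and its low-order counterpart obtained from the Sobolev inequality, one obtains
\[
\big\|z(t_2,\cdot)-z(t_1,\cdot)\big\|_{H^2(\Hy^N)}\le C\,\|\psi\|_{L^p([t_1,t_2];H^{2,q}(\Hy^N))}^{\gamma}.
\]
As $\psi\in L^p(\R;H^{2,q}(\Hy^N))$, the right-hand side tends to $0$ when $t_1\le t_2$ both go to $+\infty$ (resp. $-\infty$), so the Cauchy criterion yields $z_\pm\in H^2(\Hy^N)$ with $\|e^{-itP}\psi(t)-z_\pm\|_{H^2(\Hy^N)}\to 0$ as $t\to\pm\infty$. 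Setting $\psi_\pm=z_\pm$ and applying the $H^2$-isometry $e^{itP}$ gives $\|\psi(t)-e^{itP}\psi_\pm\|_{H^2(\Hy^N)}\to 0$, which is the claim.

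The main obstacle, such as it is, is the $H^2$-level nonlinear estimate for $\Delta_{\Hy^N}F(\psi)$: since $F$ behaves like $|\psi|^{2\sigma}\psi$ with $\gamma=2\sigma+1$, one must make sure $F$ is regular enough for the Leibniz/chain rule and that the resulting H\"older bookkeeping closes within the admissibility constraints — concretely, that one can take $(\tilde p,\tilde q)$ with $p=\gamma\tilde p'$ and $\tfrac1{\tilde q'}\ge\tfrac{\gamma}{q}-\tfrac{2(\gamma-1)}{N}$ while keeping both $(p,q)$ and $(\tilde p,\tilde q)$ admissible, which is possible precisely when $1<\gamma\le\frac{N+4}{(N-4)_+}$. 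All of this is already carried out in the proof of Theorem~\ref{thmexh2}, so no new analysis is needed; the remaining steps coincide line by line with the $L^2$ argument.
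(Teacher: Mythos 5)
Your proposal is correct and matches the paper's intent exactly: the paper dispatches this theorem with the single remark that it follows ``with the same proof but using Theorem~\ref{thmexh2} instead of Theorem~\ref{thmexl2},'' i.e.\ precisely the substitution of $H^2$-level Strichartz and nonlinear estimates into the $L^2$ Cauchy-criterion argument that you carry out. Your additional observations (that $e^{-itP}$ is an isometry on $H^2(\Hy^N)$ and that the estimate for $\Delta_{\Hy^N}F(\psi)$ is already established in the proof of Theorem~\ref{thmexh2}) are exactly the points the paper leaves implicit.
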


We can also prove the existence of wave operator following Banica-Carles-Staffilani \cite{BaCaSt}.

\begin{thm}
Assume that $F(z)=|z|^{\gamma -1} z$, $\beta>0$ and that $\gamma <\tfrac{N+4}{(N-4)_+}$. Then, for any $\psi_0\in H^2 (\Hy^N)$, \eqref{4NLSeq} has a unique global solution $\psi(t,\cdot)$ with the scattering property 
$$\|\psi(t) - e^{itP}\psi_0\|_{H^2 (\Hy^N)} \rightarrow 0\ as\ t\rightarrow \pm \infty .$$
\end{thm}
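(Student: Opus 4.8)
The plan is to construct the solution by running the fixed point argument \emph{backwards from infinity} rather than forwards from $t=0$, exactly as in the scattering-to-solution (wave operator) construction of Banica--Carles--Staffilani \cite{BaCaSt}. Fix the admissible pairs $(p,q)$, $(\tilde p,\tilde q)$ used in the proof of Theorem~\ref{thmexh2}, so that the nonlinearity $F(z)=|z|^{\gamma-1}z$ maps $L^p_t H^{2,q}_x$ into $L^{\tilde p'}_t H^{2,\tilde q'}_x$ with the gain in time integrability coming from $p=\gamma\tilde p'$ (and, for the arbitrary-data part, a small H\"older loss $T^\lambda$). The map whose fixed point we seek is
\[
\Phi(\psi)(t)=e^{itP}\psi_0 + i\int_t^{+\infty} e^{i(t-s)P}F(\psi(s))\,ds,
\]
and likewise with $\int_{-\infty}^t$ for $t\to-\infty$; a fixed point of $\Phi$ automatically satisfies \eqref{4NLSeq} and, since $e^{-itP}\psi(t)-\psi_0=i\int_t^\infty e^{-isP}F(\psi(s))\,ds\to0$ in $H^2$ as $t\to+\infty$ by the same Strichartz/Cauchy-criterion argument as in the previous scattering theorem, it has the asserted scattering property.

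First I would record that Theorem~\ref{thmstri}, applied on the half-line $I=[t_0,\infty)$ (legitimate since $\beta>0$, so no restriction $|I|\le1$ is needed), gives
\[
\big\|\Phi(\psi)\big\|_{L^\infty_t H^2_x \cap L^p_t H^{2,q}_x([t_0,\infty))}
\le C\|\psi_0\|_{H^2(\Hy^N)} + C\big\|\psi\big\|^{\gamma}_{L^p_t H^{2,q}_x([t_0,\infty))},
\]
together with the matching Lipschitz bound
\[
\big\|\Phi(\psi)-\Phi(\tilde\psi)\big\|_{L^p_t L^q_x([t_0,\infty))}
\le C\big(\|\psi\|_X^{\gamma-1}+\|\tilde\psi\|_X^{\gamma-1}\big)\|\psi-\tilde\psi\|_{L^p_t L^q_x([t_0,\infty))},
\]
exactly as in Theorem~\ref{thmexh2}; the only difference is that the free term $e^{itP}\psi_0$ now has $L^p_t H^{2,q}_x([t_0,\infty))$-norm tending to $0$ as $t_0\to+\infty$, by the Strichartz estimate applied to the zero-nonlinearity problem and dominated convergence. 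Hence, choosing $t_0$ large enough that $C\|e^{itP}\psi_0\|_{L^p_t H^{2,q}_x([t_0,\infty))}\le\varepsilon/2$ with $\varepsilon$ small (so that $C\varepsilon^{\gamma-1}\le1/4$ and $C\varepsilon^\gamma\le\varepsilon/2$), $\Phi$ is a contraction on the ball $X_\varepsilon\subset X=C([t_0,\infty);H^2)\cap L^p([t_0,\infty);H^{2,q})$ in the weaker $Y=L^p([t_0,\infty);L^q)$ metric; the reflexivity/separability trick from Theorem~\ref{thmexh2} upgrades the $Y$-limit to a genuine fixed point in $X_\varepsilon$. This produces a solution on $[t_0,\infty)$ with the scattering property at $+\infty$; the same construction on $(-\infty,-t_0']$ handles $-\infty$.

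The remaining point is to propagate this solution, defined a priori only near $\pm\infty$, to a single \emph{global} solution with initial data $\psi_0\in H^2(\Hy^N)$, i.e. to show the solution exists on all of $\R$ and is unique. For this I would invoke the local well-posedness for arbitrary $H^2$ data from Theorem~\ref{thmexh2} (valid since $\gamma<(N+4)/(N-4)_+$): the solution constructed on $[t_0,\infty)$ has $\psi(t_0)\in H^2$, so it extends to a maximal interval $(T_-,\infty)$, and a standard blow-up alternative plus the global Strichartz bound (which, because $\beta>0$, controls $\|\psi\|_{L^p_t H^{2,q}_x}$ on the whole half-line without smallness of $\psi_0$—only smallness of the $L^p_t H^{2,q}_x$-norm near infinity, which we have) prevents finite-time breakdown, giving $T_-=-\infty$. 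Uniqueness among $H^2$ solutions is the usual Gronwall/Strichartz argument on bounded subintervals. \emph{The main obstacle} is precisely this gluing step: one must be sure the two pieces (from $+\infty$ and from $-\infty$) and the local theory patch into one solution — but since the solution is unique on each overlap interval by the local Cauchy theory, and the global Strichartz estimate gives an a priori bound on every compact time interval, there is no genuine difficulty beyond bookkeeping; the essential analytic content is entirely contained in Theorem~\ref{thmstri} and the decay $|t|^{-3/2}$ for $\beta>0$ that makes the inhomogeneous Strichartz norm finite on unbounded intervals.
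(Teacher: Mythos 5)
The paper gives no proof of this theorem at all --- it is stated bare, preceded only by the sentence pointing to Banica--Carles--Staffilani \cite{BaCaSt} --- so there is nothing to compare line by line. Your backward-from-infinity construction, i.e.\ the fixed point for $\Phi(\psi)(t)=e^{itP}\psi_0+i\int_t^{+\infty}e^{i(t-s)P}F(\psi(s))\,ds$ on $[t_0,\infty)$, using that $\|e^{itP}\psi_0\|_{L^p_tH^{2,q}_x([t_0,\infty))}\to0$ as $t_0\to\infty$ (legitimate because $\beta>0$ makes Theorem \ref{thmstri} global in time), is indeed the standard wave-operator argument the authors are alluding to, and that part of your write-up is consistent with the machinery of Theorem \ref{thmexh2}.

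There is, however, a genuine gap in your globality step. You claim that the ``global Strichartz bound'' controls $\|\psi\|_{L^p_tH^{2,q}_x}$ on the whole line using only smallness of the norm near $+\infty$, and that this prevents finite-time breakdown of the backward extension. That is false: Strichartz is a linear estimate, and closing it on the nonlinear solution over $[T_-,t_0]$ requires smallness or an a priori bound \emph{on that interval}; the smallness you possess lives only on $[t_0,\infty)$ and says nothing about earlier times. The honest route to $T_-=-\infty$ is local well-posedness plus a uniform-in-time $H^2$ bound from conservation of mass and energy, and there you must check the sign: the conserved energy of $i\partial_t\psi+P\psi=|\psi|^{\gamma-1}\psi$ is $\tfrac12\langle P\psi,\psi\rangle-\tfrac{1}{\gamma+1}\|\psi\|_{L^{\gamma+1}}^{\gamma+1}$, which is not coercive with this sign (this is the focusing configuration for which the paper proves non-scattering of large data in Theorem \ref{trapping} and blow-up in Section \ref{secblowup}), so either a defocusing hypothesis or a weakening of ``global'' is needed; in any case your stated mechanism does not supply it. A second, related problem: the solutions you build at $+\infty$ and at $-\infty$ from the \emph{same} datum $\psi_0$ are different solutions in general (a single solution asymptotic to $e^{itP}\psi_0$ at both ends would force the scattering operator to be the identity), so the ``gluing'' you describe cannot be carried out by local uniqueness on an overlap; the $\pm$ in the statement has to be read as ``either sign'', one wave operator at a time.
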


Finally, we conclude this section by proving a rough trapping result. Before stating our result, we need some notation. We set
$$\|u\|^2_{H_{\beta,\lambda} (\Hy^N)} =\int_{\Hy^N} \big( |\Delta_{\Hy^N}  u|^2 +\beta |\nabla u|^2 +\lambda u^2 \big)\, dV. $$
We recall the following higher order Poincar\' e inequality (see Berchio-Ganguly \cite{BeGa}) for $k,j\in \N$ with $k>j$,
$$\int_{\Hy^N} |\nabla^k u |^2 \,dV \geq (\dfrac{N-1}{2})^{2(k-l)} \int_{\Hy^N} |\nabla^l u|^2 \,dV,$$
where 
$$\nabla^j =\begin{cases} \Delta_{\Hy^N}^{j/2},&\mbox{ if $j$ is even},\\ \nabla (\Delta_{\Hy^N}^{(j-1)/2}),&\mbox{ if $j$ is odd}.  \end{cases}$$
So assuming that $\beta\geq 0$, we see that $\| \cdot\|_{H_{\beta,\lambda}(\Hy^N)}$ is equivalent to $\|\cdot\|_{H^2 (\Hy^N)}$ if 
\begin{equation}
\label{lambdacoer}
\lambda >-\left(\frac{N-1}{2}\right)^4 -\beta \left(\frac{N-1}{2}\right)^2.
\end{equation}
We set
$$E_{\beta,\lambda}(u)= \frac{1}{2}\|u\|^2_{H_{\beta,\lambda} (\Hy^N)}- \frac{1}{2\sigma +2}\|u \|_{L^{2\sigma +2}(\Hy^N)}^{2\sigma +2}.$$
If $\beta \geq 0$, $\lambda$ satisfies \eqref{lambdacoer}, and $0<\sigma<\frac{4}{(N-4)_+}$, it is standard to prove that the minimisation problem
\begin{equation}
\label{min}
D^{-1}=\min_{u\in H^2 (\Hy^N)}\dfrac{\|u\|_{H_{\beta,\lambda} (\Hy^N) }^2 }{\|u\|_{L^{2\sigma +2}(\Hy^N) }^{2} },
\end{equation}
admits a solution $Q$. We will say that $Q$ is a ground state. We denote by $\mathcal{Q}$ the set of all ground states, i.e.
$$\mathcal{Q}=\{Q\in H^2 (\Hy^N)\colon \ Q\ \text{is a  solution to \eqref{min}}\}.$$ 
 It is easy to check that $Q$ satisfies
\begin{equation}
\label{eqell}
\Delta_{\Hy^N}^2 Q -\beta \Delta_{\Hy^N} Q +\lambda Q= D^{-1} |Q|^{2\sigma} Q.
\end{equation}
So, setting $\tilde{Q}= D^{-(2\sigma +1)} Q$, we see that $\tilde{Q}$ satisfies \eqref{eqell} with $D^{-1}\equiv 1$. 
We set $$\delta_\lambda (u)= \|u\|_{H_{\beta,\lambda} (\Hy^N)}^2 - \|Q\|_{H_{\beta,\lambda} (\Hy^N)}^2,$$
 where $Q \in \mathcal{Q}$. We prove the following:

\begin{thm}
\label{trapping}
Let $N\geq 2$, $\beta\geq 0$, $\lambda$ satisfying \eqref{lambdacoer} and $\psi (t)$ be the solution to 
$$\begin{cases}
i\partial_t\psi = -\Delta_{\Hy^N}^2\psi +\beta\Delta_{\Hy^N} \psi +|\psi|^{2\sigma}\psi \quad\text{in }\R\times \Hy^N,\\
\psi(0,\cdot)=\psi_0\in H^2 (\Hy^N).
\end{cases}$$
  If $0<\sigma<\frac{4}{(N-4)_+}$ and $E_{\beta ,\lambda} (\psi_0)\leq E_{\beta,\lambda} (Q)$, $Q\in \mathcal{Q}$, then $\delta_\lambda (\psi (t))$ does not change sign. Moreover,
\begin{itemize}
\item If $\delta_\lambda (\psi_0)=0$, then there exists $\theta \in \R$ and a hyperbolic isometry $h$ such that $\psi_0=e^{i\theta}Q\big(h(\cdot)\big)$ , $Q\in \mathcal{Q}$.
\item if $\delta_\lambda (\psi_0)<0$, then the solution $\psi$ is global in time.
\item if $\delta_\lambda (\psi_0)>0$ then the solution $\psi$ does not scatter in any time direction.
\end{itemize}
\end{thm}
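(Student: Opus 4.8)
The plan is to run a variational ``below the ground state'' dichotomy: one tracks the conserved quantities $M[\psi(t)]$ and $E_{\beta,\lambda}(\psi(t))$ together with the non-conserved quantity $\|\psi(t)\|_{H_{\beta,\lambda}(\Hy^N)}^2$, and compares them with the corresponding quantities of $Q$ via the sharp Gagliardo--Nirenberg inequality attached to \eqref{min}. \emph{Step 1 (variational set-up).} The constant $D$ in \eqref{min} is by definition the sharp constant in $\|u\|_{L^{2\sigma+2}(\Hy^N)}^{2\sigma+2}\le D^{\sigma+1}\|u\|_{H_{\beta,\lambda}(\Hy^N)}^{2\sigma+2}$ on $H^2(\Hy^N)$, with equality exactly when $u$ is a ground state (an element of $\mathcal{Q}$, up to a phase, a dilation $u\mapsto\mu u$ and a hyperbolic isometry). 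Hence $E_{\beta,\lambda}(u)\ge g\big(\|u\|_{H_{\beta,\lambda}(\Hy^N)}^2\big)$ for all $u$, where $g(y)=\tfrac12 y-\tfrac{D^{\sigma+1}}{2\sigma+2}y^{\sigma+1}$, with equality iff $u$ is a suitably dilated ground state; $g$ is strictly increasing on $[0,y_c]$ and strictly decreasing on $[y_c,\infty)$ with $y_c=D^{-(\sigma+1)/\sigma}$, and a short computation with the Euler--Lagrange equation shows that the dilation of $Q$ solving $\Delta_{\Hy^N}^2Q-\beta\Delta_{\Hy^N}Q+\lambda Q=|Q|^{2\sigma}Q$ (equivalently, along which $E_{\beta,\lambda}$ is maximal) satisfies $\|Q\|_{H_{\beta,\lambda}(\Hy^N)}^2=y_c$ and $E_{\beta,\lambda}(Q)=g(y_c)=\max g=\tfrac{\sigma}{2\sigma+2}\|Q\|_{H_{\beta,\lambda}(\Hy^N)}^2$; in particular $2E_{\beta,\lambda}(Q)<\|Q\|_{H_{\beta,\lambda}(\Hy^N)}^2$. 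Finally, by \eqref{lambdacoer}, $\beta\ge0$ and the higher-order Poincar\'e inequality, $\|\cdot\|_{H_{\beta,\lambda}(\Hy^N)}\simeq\|\cdot\|_{H^2(\Hy^N)}$, and $E_{\beta,\lambda}(\psi(t))$ is conserved (it equals the conserved energy of the equation plus $\tfrac{\lambda}{2}M[\psi(t)]$).

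\emph{Step 2 (the sign of $\delta_\lambda$).} On the maximal interval $(T_-,T_+)$ the map $t\mapsto\|\psi(t)\|_{H_{\beta,\lambda}(\Hy^N)}^2$ is continuous. If it ever equalled $y_c=\|Q\|_{H_{\beta,\lambda}(\Hy^N)}^2$ at some $t_1$, then
\[
E_{\beta,\lambda}(\psi_0)=E_{\beta,\lambda}(\psi(t_1))\ge g\big(\|\psi(t_1)\|_{H_{\beta,\lambda}(\Hy^N)}^2\big)=g(y_c)=E_{\beta,\lambda}(Q)\ge E_{\beta,\lambda}(\psi_0),
\]
forcing equality throughout. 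When $E_{\beta,\lambda}(\psi_0)<E_{\beta,\lambda}(Q)$ this is impossible, so $\|\psi(t)\|_{H_{\beta,\lambda}(\Hy^N)}^2$ never crosses $y_c$ and $\delta_\lambda(\psi(t))$ keeps a fixed sign. When $E_{\beta,\lambda}(\psi_0)=E_{\beta,\lambda}(Q)$, equality in the Gagliardo--Nirenberg bound at $\psi(t_1)$ together with $\|\psi(t_1)\|_{H_{\beta,\lambda}(\Hy^N)}^2=y_c$ forces (the Lagrange multiplier in the Euler--Lagrange equation being exactly $1$ at this dilation) $\psi(t_1)$ to solve $\Delta_{\Hy^N}^2\psi(t_1)-\beta\Delta_{\Hy^N}\psi(t_1)+\lambda\psi(t_1)=|\psi(t_1)|^{2\sigma}\psi(t_1)$; then $e^{-i\lambda(t-t_1)}\psi(t_1)$ solves the equation with the same data at $t_1$, so by uniqueness $\psi(t)=e^{-i\lambda(t-t_1)}\psi(t_1)$ and $\|\psi(t)\|_{H_{\beta,\lambda}(\Hy^N)}^2\equiv y_c$. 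In all cases $\delta_\lambda(\psi(t))$ does not change sign, and if it vanishes at one time it vanishes identically.

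\emph{Step 3 (the three cases).} If $\delta_\lambda(\psi_0)=0$, then $\|\psi_0\|_{H_{\beta,\lambda}(\Hy^N)}^2=y_c$, and $E_{\beta,\lambda}(\psi_0)\le E_{\beta,\lambda}(Q)=g(y_c)$ combined with $E_{\beta,\lambda}(\psi_0)\ge g(y_c)$ gives equality in Gagliardo--Nirenberg; hence $\psi_0=e^{i\theta}Q(h(\cdot))$ for some $\theta\in\R$, $Q\in\mathcal{Q}$ and a hyperbolic isometry $h$. If $\delta_\lambda(\psi_0)<0$, then by Step 2 $\|\psi(t)\|_{H_{\beta,\lambda}(\Hy^N)}^2<y_c$ on $(T_-,T_+)$, so $\sup_t\|\psi(t)\|_{H^2(\Hy^N)}<\infty$; since the local existence time of the $H^2$ Cauchy theory (Theorem \ref{thmexh2}, and its analogue on intervals of length $\le1$ if $\beta\le0$) depends only on the $H^2$-norm, the solution continues to all of $\R$ and is global. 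If $\delta_\lambda(\psi_0)>0$, then $\|\psi(t)\|_{H_{\beta,\lambda}(\Hy^N)}^2>y_c$ on $(T_-,T_+)$; a solution that is not global in a given time direction does not scatter in that direction, so suppose $T_+=\infty$ and $\psi$ scatters forward to $\psi_+\in H^2(\Hy^N)$ for the free propagator $U(t)$ of the linear part. Since $U(t)$ commutes with $\Delta_{\Hy^N}$ and preserves $\|\cdot\|_{H_{\beta,\lambda}(\Hy^N)}$, $\|\psi(t)\|_{H_{\beta,\lambda}(\Hy^N)}^2\to\|\psi_+\|_{H_{\beta,\lambda}(\Hy^N)}^2$; and since $\psi$ scatters it lies in $L^p\big([0,\infty);L^{2\sigma+2}(\Hy^N)\big)$ for an admissible pair $(p,2\sigma+2)$ (which exists because $\sigma<4/(N-4)_+$, cf.\ the dispersive decay of $U(t)$ in Proposition \ref{disp} and the Remark after Theorem \ref{thmexintro}), so $\|\psi(t_n)\|_{L^{2\sigma+2}(\Hy^N)}\to0$ along some $t_n\to\infty$. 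Passing to the limit in the conserved energy gives $E_{\beta,\lambda}(\psi_0)=\tfrac12\|\psi_+\|_{H_{\beta,\lambda}(\Hy^N)}^2\ge0$, hence $\|\psi_+\|_{H_{\beta,\lambda}(\Hy^N)}^2=2E_{\beta,\lambda}(\psi_0)\le2E_{\beta,\lambda}(Q)<y_c$, contradicting $\|\psi(t)\|_{H_{\beta,\lambda}(\Hy^N)}^2>y_c$ for all $t$; the backward direction is identical.

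\emph{Main obstacle.} The crux is the variational rigidity underlying Step 2: one needs that \eqref{min} is attained, that ground states are smooth with fast decay so that the Euler--Lagrange and Pohozaev identities are legitimate, and, above all, that equality in $E_{\beta,\lambda}(u)\ge g(\|u\|_{H_{\beta,\lambda}(\Hy^N)}^2)$ at the peak value $y_c$ forces $u$ to solve the standing-wave equation with multiplier exactly $1$ --- this is what makes the endpoint $E_{\beta,\lambda}(\psi_0)=E_{\beta,\lambda}(Q)$ rigorous and what identifies $E_{\beta,\lambda}(Q)$ with $\max g$. A secondary technical point is transferring the dispersive estimate of Proposition \ref{disp} to the free flow $U(t)$ of the unshifted linear equation in Step 3 and checking that the decay survives for every admissible value of $\beta$.
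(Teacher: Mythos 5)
Your proposal is correct and follows essentially the same route as the paper: the sharp constant of the minimization problem \eqref{min} together with the identities $\|Q\|_{H_{\beta,\lambda}}^2=\|Q\|_{L^{2\sigma+2}}^{2\sigma+2}$ and $E_{\beta,\lambda}(Q)=\tfrac{\sigma}{2\sigma+2}\|Q\|_{H_{\beta,\lambda}}^2$, a continuity argument for the sign of $\delta_\lambda$, the uniform $H^2$ bound for global existence, and the decay of the $L^{2\sigma+2}$ norm under scattering to contradict $\delta_\lambda>0$. Your reformulation via the function $g$ and its maximum at $y_c=\|Q\|_{H_{\beta,\lambda}}^2$ is just a repackaging of the paper's comparison claim, so nothing essentially new or missing.
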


\begin{proof}
We begin by proving the following claim: if $E_{\beta,\lambda} (\psi)\leq E_{\beta,\lambda} (Q)$ and 
\[
\|\psi\|_{H_{\beta,\lambda} (\Hy^N)}^2  \leq \|Q\|_{H_{\beta,\lambda} (\Hy^N)}^2,
\] 
then there holds
$$\|\psi\|_{H_{\beta,\lambda} (\Hy^N)}^2  \leq \dfrac{E_{\beta,\lambda} (\psi)}{E_{\beta,\lambda} (Q)}\|Q\|_{H_{\beta,\lambda} (\Hy^N)}^2 .$$
Indeed, first as noticed previously, $Q$ satisfies \eqref{eqell} with $D=1$. So, we find 
$$\|Q\|_{H_{\beta,\lambda} (\Hy^N)}^2 =\|Q\|_{L^{2\sigma +2} (\Hy^N)}^{2\sigma +2}.$$
This implies that
$$D=\|Q\|_{L^{2\sigma+2} (\Hy^N)}^{-2\sigma}=\|Q\|_{H_{\beta,\lambda} (\Hy^N)}^{\frac{-4\sigma}{2\sigma +2}} ,$$
and
\begin{equation}
\label{etrapping1}
E_{\beta,\lambda} (Q)= \frac{\sigma}{2\sigma +2}\|Q\|_{H_{\beta,\lambda} (\Hy^N)}^2 .
\end{equation}
So, using that $Q$ is a ground state, we get 
\begin{align*}
E_{\beta,\lambda} (\psi)&= \frac{1}{2}\|\psi\|_{H_{\beta,\lambda} (\Hy^N)}^2 - \frac{1}{2\sigma +2} \|\psi\|_{L^{2\sigma +2}(\Hy^N)}^{2\sigma +2}\\
& \geq  \frac{1}{2}\|\psi\|_{H_{\beta,\lambda} (\Hy^N)}^2 - \frac{D^{\sigma +1}}{2\sigma +2} \|\psi\|_{H_{\beta,\lambda} (\Hy^N)}^{2\sigma +2}\\
&= a (\|\psi\|_{H_{\beta,\lambda} (\Hy^N)}^2),
\end{align*}
where
$$a(x)=\dfrac{1}{2}x - \dfrac{\|Q\|_{H_{\beta,\lambda} (\Hy^N)}^{-2\sigma}}{2\sigma +2} x^{\sigma +1}.$$
Let $b(x)=a(x) - \frac{\sigma}{2\sigma +2} x$. One can check that $b>0$ on $[0,\|Q\|_{H_{\beta,\lambda} (\Hy^N)}^2]$. Since $\|\psi\|_{H_{\beta,\lambda} (\Hy^N)}^2 \leq \|Q\|_{H_{\beta,\lambda} (\Hy^N)}^2$, we get
$$E_{\beta,\lambda}(\psi)\geq \dfrac{\sigma}{2\sigma +2}\|\psi\|_{H_{\beta,\lambda} (\Hy^N)}^2.$$
Using \eqref{etrapping1}, this proves the claim.

Now suppose that $\delta (\psi_0)=0$. Then $E_{\beta,\lambda}(\psi_0)=E_{\beta,\lambda}(Q)$. So $\psi_0$ is a ground state. As a consequence, if $\delta (\psi_0)\neq 0$, then $\delta (\psi(t))\neq 0$ for all $t$ such that $\psi(t)$ exists.

Next, assume that $\delta (\psi_0)<0$. Since the sign of $\delta (\psi(t))$ does not depend on $t$, we deduce that $\|\psi(t)\|_{H_{\beta,\lambda} (\Hy^N)}\leq C$, for some constant $C$ not depending on $t$. Therefore, using the mass conservation, $\psi(t)$ is uniformly bounded in $H^2 (\Hy^N)$ so it exists globally.

Finally, suppose that $\delta (\psi(t))>0$ and that $\psi(t)$ scatters. In particular, we have
$$\lim_{t\rightarrow \infty} \|\psi\|_{L^{2\sigma +2}(\Hy^N)}=0.$$ 
So, for $\varepsilon>0$, we can find $t$ large enough such that
$$ \frac{1}{2} \|\psi(t)\|_{H_{\beta,\lambda} (\Hy^N)}^2 < E_{\beta,\lambda} (\psi (t)) +\varepsilon = E_{\beta,\lambda} (\psi_0) +\varepsilon. $$
On the other hand, we have
$$E_{\beta,\lambda}(\psi_0)\leq E_{\beta,\lambda}(Q) < \frac{1}{2} \|\psi (t)\|_{H_{\beta,\lambda} (\Hy^N)}^2 -\frac{1}{2\sigma +2} \|Q\|_{L^{2\sigma +2}(\Hy^N)}^{2\sigma +2}.$$
So, taking $\varepsilon=\frac{1}{4(\sigma +1)} \|Q\|_{L^{2\sigma +2}(\Hy^N)}^{2\sigma +2} $, we get a contradiction.
\end{proof}

\section{Strichartz estimates for radial solutions on a complete rotationally symmetric Riemannian manifold}
\label{secstri2}
In this section we adapt the method from \cite{BaDyweight} to obtain Strichartz estimates on rotationally symmetric manifolds. We assume that $M=(\R^N,g)$ is an $N$-dimensional complete rotationally symmetric 
manifold with the Riemannian metric 
$g=dr^2 + \phi^2(r)d\theta^2$, where $d\theta^2$ is the standard metric on the sphere $\Sph^{N-1}$ and 
$\phi$ is a $C^\infty$-smooth nonnegative function on $[0,\infty)$, strictly positive on $(0,\infty)$ such that 
$\phi^{(\text{2k})}(0)=0\,\ k=0,1,\ldots,$ and $\phi^\prime(0)=1$.
We recall that the Laplace-Beltrami operator on $M$ is  given by
\[
\Delta_{M}=\partial_r^2 +(N-1)\frac{\phi^\prime (r)}{\phi(r)}\partial_r
+\frac{1}{\phi^2(r)}\Delta_{\Sph^{N-1}},
\]
where $\Delta_{\Sph^{N-1}}$ is the Laplace-Beltrami operator on the unit sphere $\Sph^{N-1}$.
As in \cite[\S 2.1]{BaDyweight} we set $\sigma(r)=\big(r/\phi(r)\big)^{(N-1)/2}$ and 
$\psi(t,r)=\sigma (r)\varphi(t,r)$. Then the Laplacian of a radial function $\psi$ on $M$ can be expressed in terms of the Euclidean Laplacian, denoted by $\Delta_{\R^N}$ in what follows, with a potential as 
$$\Delta_M \psi = \sigma (\Delta_{\R^N} \varphi +V\varphi ),$$
where 
\begin{align*}
V &=\frac{\sigma''}{\sigma}+(N-1)\frac{\phi'}{\phi}\frac{\sigma'}{\sigma}\\
&=\frac{N-1}{2}\left[\frac{N-3}{2}\left(\frac{1}{r^2}
-\left(\frac{\phi^\prime}{\phi}\right)^2\right)
-\frac{\phi''}{\phi}\right].
\end{align*}
By noticing that 
\[
2\frac{\sigma'}{\sigma}+ (N-1) \frac{\phi'}{\phi}=\frac{N-1}{r},
\]
we can write the bi-Laplacian on $M$ as
\begin{align*}
\Delta^2_M \psi &= \sigma \big(\Delta_{\R^N}^2 \varphi +2V \Delta_{\R^N} \varphi +2\varphi ' V' +\varphi (\Delta_{\R^N} V +V^2)\big)\\
&=\sigma (-\Delta_{\R^N} -V ) (-\Delta_{\R^N} -V) \varphi.
\end{align*}
Let $\tilde{P}_M \psi= \Delta^2_M \psi -\beta \Delta_M \psi $. Then we have
\begin{align*}
\tilde{P}_M \psi &= \sigma \big[ (-\Delta_{\R^N} -V ) (-\Delta_{\R^N} -V) +\beta (-\Delta_{\R^N} -V) \big] \varphi\\
&=\sigma  (-\Delta_{\R^N} - V) (-\Delta_{\R^N} -V +\beta) \varphi.
\end{align*}
We denote $P_V= \tilde{P}_M /\sigma$. Observe that, for any $\mu \in \mathbb{C}$, 
$$P_V- \mu= (-\Delta_{\R^N} -V +\gamma_1) (-\Delta_{\R^N} -V +\gamma_2),$$
where $\gamma_1 \gamma_2= -\mu$, $\gamma_1 +\gamma_2 = \beta$. Thanks to this decomposition, we can split the resolvent $R(P_V-\mu)=(P_V-\mu)^{-1}$ into a sum of resolvents of two second order operators. Indeed, we have
\begin{equation}
\label{decom}
R(P_V-\mu) =\dfrac{ (-\Delta_{\R^N} -V +\gamma_1)^{-1} - (-\Delta_{\R^N} -V + \gamma_2)^{-1}}{\gamma_2 -\gamma_1} .
\end{equation}
Next, let us recall the resolvent estimate of 
Burq et al. \cite[Theorem 2.1]{BPST}; see  \cite[Theorem 3.1]{BaDyweight}.
\begin{thm} \label{thmbd}
Let $V\in C^1 (\R^N \setminus \{0\})$ such that
$$|V(x)|\leq C |x|^{-2}.$$
Suppose that there exists $\delta_0>0$ such that
\[
\left(\frac{N}{2} -1\right)^2 +r^2 V(r) \geq \delta_0
\]
and
\[
\left(\frac{N}{2} -1\right)^2 - r^2 \partial_r (rV)(r) \geq \delta_0
\]
in $\R^N \setminus \{0\}$.
Then there exists $C>0$ such that
$$\sup_{\mu \in \mathbb{C} \setminus \R } \left\| |x|^{-1} (-\Delta_{\R^N} +V -\mu)^{-1} |x|^{-1}\right\|_{L^2 (\R^N) \rightarrow L^2 (\R^N)}\leq C.$$
\end{thm}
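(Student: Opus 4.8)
Theorem~\ref{thmbd} is the uniform weighted resolvent estimate of Burq, Planchon, Stalker and Tahvildar--Zadeh, so the plan is to deduce it from \cite[Theorem~2.1]{BPST} after the standard separation of variables; below I indicate the argument one would run to prove it directly. Since $V=V(r)$ is radial, decompose
\[
L^2(\R^N)=\bigoplus_{k\ge 0}L^2\big((0,\infty),r^{N-1}\,dr\big)\otimes\cH_k,
\]
where $\cH_k$ is the eigenspace of $-\Delta_{\Sph^{N-1}}$ with eigenvalue $k(k+N-2)$. On the $k$-th sector the operator $-\Delta_{\R^N}+V$ is unitarily equivalent, via the isometry $u\mapsto r^{(N-1)/2}u$ from $L^2(r^{N-1}dr)$ onto $L^2(dr)$, to the half-line operator
\[
L_k=-\partial_r^2+\frac{\nu_k^2-1/4}{r^2}+V(r),\qquad \nu_k=k+\frac N2-1\ \ge\ \frac N2-1,
\]
and multiplication by $|x|^{-1}$ becomes multiplication by $r^{-1}$. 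Hence it suffices to prove $\big\|r^{-1}(L_k-\mu)^{-1}r^{-1}\big\|_{L^2(0,\infty)\to L^2(0,\infty)}\le C$ with $C$ independent of $k\in\N$ and of $\mu\in\C\setminus\R$, and then sum over $k$ using orthogonality of the $\cH_k$. By conjugating the equation one may also assume $\mathrm{Im}\,\mu>0$.

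For the one-dimensional bound I would use a virial (Morawetz, positive commutator) identity. Writing $(L_k-\mu)u=r^{-1}f$, pair the equation with the multiplier $r u'+\tfrac12 u$ in $L^2(0,\infty)$ --- equivalently, commute $L_k$ with the dilation generator $\tfrac i2(r\partial_r+\partial_r r)$, the zeroth-order term being present so that the $\mathrm{Re}\,\mu$ contribution cancels and the $\mathrm{Im}\,\mu$ contribution becomes a non-negative term that can be discarded. Taking the real part one obtains
\[
\int_0^\infty |u'|^2 + \int_0^\infty\Big(\frac{\nu_k^2-1/4}{r^2}-\frac{rV'}{2}\Big)|u|^2 \ \le\ \mathrm{Re}\,\langle r^{-1}f,\ ru'+\tfrac12 u\rangle .
\]
Applying the one-dimensional Hardy inequality $\int|u'|^2\ge\tfrac14\int r^{-2}|u|^2$ to the first term and writing $rV'=\partial_r(rV)-V$, the coefficient of $r^{-2}|u|^2$ on the left becomes $\tfrac12(\nu_k^2+r^2V)+\tfrac12(\nu_k^2-r^2\partial_r(rV))$; since $\nu_k\ge N/2-1$, the two hypotheses give
\[
\tfrac12\big(\nu_k^2+r^2V\big)+\tfrac12\big(\nu_k^2-r^2\partial_r(rV)\big)\ \ge\ \tfrac12\big((\tfrac N2-1)^2+r^2V\big)+\tfrac12\big((\tfrac N2-1)^2-r^2\partial_r(rV)\big)\ \ge\ \delta_0 ,
\]
so the left-hand side of the identity controls $\delta_0\|r^{-1}u\|_{L^2}^2$ uniformly in $k$. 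On the right, $\mathrm{Re}\,\langle r^{-1}f,ru'+\tfrac12 u\rangle=\mathrm{Re}\langle f,u'\rangle+\tfrac12\mathrm{Re}\langle f,r^{-1}u\rangle$; after bounding $\|u'\|_{L^2}$ by means of the energy identity $\int|u'|^2+\int W_k|u|^2=\mathrm{Re}\langle r^{-1}f,u\rangle+\mathrm{Re}\,\mu\,\|u\|^2$ (with $W_k=\tfrac{\nu_k^2-1/4}{r^2}+V$), Cauchy--Schwarz and Young's inequality close the estimate to give $\|r^{-1}u\|_{L^2}\le C\|f\|_{L^2}$.

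The main obstacle is making everything uniform in $k$ and $\mu$ at once, and the difficulty is concentrated in bounding $\|u'\|_{L^2}$ on the right: the energy identity only returns it modulo $\mathrm{Re}\,\mu\,\|u\|_{L^2}^2$, which is harmless when $\mathrm{Re}\,\mu\le 0$ (and that is where the first hypothesis, through the coercivity above, already does all the work, ruling out a threshold eigenvalue or resonance), but for $\mathrm{Re}\,\mu>0$ one genuinely needs a limiting-absorption / high-frequency argument in which the repulsivity encoded in the second hypothesis, $(\tfrac N2-1)^2-r^2\partial_r(rV)\ge\delta_0$, plays the role of a non-trapping condition. In addition one must carefully track cutoffs near $r=0$, where the centrifugal term $\nu_k^2r^{-2}$ dominates $V=O(r^{-2})$ for large $k$, and near $r=\infty$, and deal with the borderline cases (for instance $N=2$, $k=0$, where $L_0=-\partial_r^2-\tfrac{1}{4r^2}+V$ is a critical Hardy operator and positivity is salvaged only because the first hypothesis forces $V\ge\delta_0 r^{-2}$ there). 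This is precisely the bookkeeping carried out in \cite[Theorem~2.1]{BPST}; see also \cite[Theorem~3.1]{BaDyweight}. In the write-up I would simply check that the hypotheses of Theorem~\ref{thmbd} match theirs and invoke that result.
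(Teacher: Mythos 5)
The paper offers no proof of Theorem~\ref{thmbd} at all: it is explicitly recalled as \cite[Theorem 2.1]{BPST} (see also \cite[Theorem 3.1]{BaDyweight}), and your proposal ultimately does exactly the same thing, namely verify that the hypotheses match and invoke that result. Your accompanying sketch of the underlying spherical-harmonic decomposition and Morawetz/multiplier argument, with its honestly flagged gaps, is a reasonable outline of what BPST actually do, but since the paper itself supplies only the citation, your approach is essentially identical to the paper's.
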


Thanks to the previous theorem, we will prove a  resolvent estimate for our operator.

\begin{prop}
Assume that $V$ satisfies the assumption in Theorem \ref{thmbd}. Moreover, assume that
\begin{equation}
\label{hyppot}
\left|\Delta_{\R^N} \frac{V}{\langle x\rangle}\right| +\left|\nabla\frac{V'}{\langle x\rangle}\right|+\dfrac{ \big| \Delta_{\R^N} V +V^2 +\mu \big|}{\langle x \rangle} + \frac{|V|}{\langle x\rangle} \leq \frac{C}{\langle x \rangle},
\end{equation}
where $\langle x\rangle =(1+|x|^2)^{1/2}$.
Then there exists $C>0$ such that
$$\sup_{\mu \in \mathbb{C} \setminus \R} \big\| \langle x\rangle^{-1}  (P_V -\mu)^{-1} \langle x\rangle ^{-1} \big\|_{L^2 (\R^N) \rightarrow H^2 (\R^N)}\leq C. $$
\end{prop}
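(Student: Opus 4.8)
The plan is to derive the estimate from the uniform second-order resolvent bound of Theorem~\ref{thmbd} by means of the partial-fraction identity~\eqref{decom}. Let $\gamma_1,\gamma_2$ be the roots of $z^2-\beta z-\mu=0$, so that $P_V-\mu=(-\Delta_{\R^N}-V+\gamma_1)(-\Delta_{\R^N}-V+\gamma_2)$, $\gamma_1+\gamma_2=\beta$, $\gamma_1\gamma_2=-\mu$ and $|\gamma_1-\gamma_2|=|\beta^2+4\mu|^{1/2}$. Since $\mu\notin\R$ while $\beta\in\R$, neither $\gamma_j$ is real, so $-\gamma_j\in\C\setminus\R$. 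Moreover the left-hand side of~\eqref{hyppot} contains the summands $|V|/\langle x\rangle$ and $|\Delta_{\R^N}V+V^2+\mu|/\langle x\rangle$, each $\le C/\langle x\rangle$, so $|V|\le C$ and (letting $|x|\to\infty$) $|\mu|\le C$; hence $|\gamma_1|,|\gamma_2|\le C$, and the supremum is effectively over a bounded subset of $\C\setminus\R$. Applying Theorem~\ref{thmbd} to the operator $-\Delta_{\R^N}-V$ with spectral parameter $-\gamma_j$ gives
\[
\bigl\| |x|^{-1}(-\Delta_{\R^N}-V+\gamma_j)^{-1}|x|^{-1}\bigr\|_{L^2(\R^N)\to L^2(\R^N)}\le C,\qquad j=1,2,
\]
uniformly in $\mu$; since $\langle x\rangle\ge|x|$ pointwise, the multiplier $|x|/\langle x\rangle$ is bounded by $1$, so the same estimate holds with $\langle x\rangle^{-1}$ in place of $|x|^{-1}$. (The hypotheses on $V$ also make $-\Delta_{\R^N}-V$ nonnegative by Hardy's inequality, so the spectrum of $P_V$ lies in $[0,\infty)$ and $-\beta^2/4$ lies at distance $\gtrsim\beta^2$ from it when $\beta>0$.)

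Granting this, I would first obtain the $L^2\to L^2$ bound by splitting on the size of $|\beta^2+4\mu|$. If $|\beta^2+4\mu|\ge\beta^2/2$, then $|\gamma_1-\gamma_2|\ge\beta/\sqrt2$, and~\eqref{decom} together with the previous paragraph gives $\bigl\|\langle x\rangle^{-1}(P_V-\mu)^{-1}\langle x\rangle^{-1}\bigr\|_{L^2\to L^2}\le 2C/|\gamma_1-\gamma_2|\le C$. If instead $|\beta^2+4\mu|<\beta^2/2$, then (for $\beta>0$) $\mu$ lies in a fixed disc around $-\beta^2/4$ that is at distance $\gtrsim\beta^2$ from the spectrum of $P_V$; hence $\|(P_V-\mu)^{-1}\|_{L^2\to L^2}\le C$ there, and, $\langle x\rangle^{-1}$ being a bounded multiplier, so is its weighted version. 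On that disc, since $|V|\le C$ the operator $P_V-\mu$ is uniformly elliptic of order four with bounded coefficients, so elliptic regularity yields $\|(P_V-\mu)^{-1}g\|_{H^4(\R^N)}\le C\|g\|_{L^2(\R^N)}$, whence $\bigl\|\langle x\rangle^{-1}(P_V-\mu)^{-1}\langle x\rangle^{-1}f\bigr\|_{H^2}\le\|(P_V-\mu)^{-1}(\langle x\rangle^{-1}f)\|_{H^2}\le C\|f\|_{L^2}$; this already settles the claim on the disc.

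It remains to upgrade $L^2\to L^2$ to $L^2\to H^2$ on the region $|\beta^2+4\mu|\ge\beta^2/2$. Put $u=(P_V-\mu)^{-1}\langle x\rangle^{-1}f$ and $z=(-\Delta_{\R^N}-V+\gamma_1)^{-1}\langle x\rangle^{-1}f$; from $P_V-\mu=(-\Delta_{\R^N}-V+\gamma_1)(-\Delta_{\R^N}-V+\gamma_2)$ one checks $(-\Delta_{\R^N}-V+\gamma_2)u=z$, and by the preceding steps $\|\langle x\rangle^{-1}z\|_{L^2}$ and $\|\langle x\rangle^{-1}u\|_{L^2}$ are both $\le C\|f\|_{L^2}$. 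Writing $h=\langle x\rangle^{-1}u$, the relation $\Delta_{\R^N}u=-Vu+\gamma_2u-z$ gives $\langle x\rangle^{-1}\Delta_{\R^N}u=-Vh+\gamma_2h-\langle x\rangle^{-1}z$, so $\|\langle x\rangle^{-1}\Delta_{\R^N}u\|_{L^2}\le C\|f\|_{L^2}$ using $|V|\le C$ and $|\gamma_2|\le C$. Expanding $\Delta_{\R^N}h=\langle x\rangle^{-1}\Delta_{\R^N}u+2\nabla\langle x\rangle^{-1}\cdot\nabla u+(\Delta_{\R^N}\langle x\rangle^{-1})u$ and using $|\nabla\langle x\rangle^{-1}|\lesssim\langle x\rangle^{-2}$, $|\Delta_{\R^N}\langle x\rangle^{-1}|\lesssim\langle x\rangle^{-3}$ together with $\nabla u=\langle x\rangle\nabla h+x\langle x\rangle^{-1}h$ and $u=\langle x\rangle h$, the last two terms are $\lesssim|\nabla h|+|h|$ pointwise, so $\|\Delta_{\R^N}h\|_{L^2}\le C\|f\|_{L^2}+C\|\nabla h\|_{L^2}$. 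For each fixed $\mu\in\C\setminus\R$ one has $u\in H^4(\R^N)$, hence $h\in H^2(\R^N)$, so the interpolation inequality $\|\nabla h\|_{L^2}^2\le\|\Delta_{\R^N}h\|_{L^2}\|h\|_{L^2}$ applies; absorbing $\|\nabla h\|_{L^2}$ gives $\|\Delta_{\R^N}h\|_{L^2}\le C\|f\|_{L^2}$, and finally $\|\langle x\rangle^{-1}u\|_{H^2}\le C(\|\Delta_{\R^N}h\|_{L^2}+\|h\|_{L^2})\le C\|f\|_{L^2}$. The remaining bounds in~\eqref{hyppot}, on $\Delta_{\R^N}(V/\langle x\rangle)$, $\nabla(V'/\langle x\rangle)$ and $(\Delta_{\R^N}V+V^2+\mu)/\langle x\rangle$, are what keep the coefficients controllable if one instead differentiates the fourth-order equation for $u$ directly, and will reappear in the ensuing smoothing estimate.

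\textbf{The main obstacle} is uniformity in $\mu$. The coalescence $\gamma_1\to\gamma_2$ forces the case split above, and when $\beta=0$ (so that $-\beta^2/4$ is the bottom of the spectrum of $P_V$) it cannot be dispatched by a crude resolvent bound: there one must instead control the weighted norm of the square $(-\Delta_{\R^N}-V+\gamma)^{-2}$ at the threshold. Likewise the $L^2\to H^2$ upgrade would break down for unbounded $\mu$ and succeeds only because~\eqref{hyppot} confines $\mu$, and the relevant combinations of $V$, to a bounded range.
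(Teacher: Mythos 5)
Your skeleton --- the partial-fraction identity \eqref{decom} combined with the uniform weighted resolvent bound of Theorem \ref{thmbd} for each second-order factor --- is the same as the paper's starting point. Where you genuinely diverge is the upgrade to the weighted $H^2$ bound: you exploit the factorization $(P_V-\mu)=(-\Delta_{\R^N}-V+\gamma_1)(-\Delta_{\R^N}-V+\gamma_2)$ to write $\Delta_{\R^N}u=-Vu+\gamma_2u-z$ with $z$ already controlled by Theorem \ref{thmbd}, then handle the commutator with $\langle x\rangle^{-1}$ by interpolation. The paper instead first records a weighted $L^2\to H^1$ bound (via \eqref{decom} and the argument of \cite{BaDyweight}), then multiplies $(P_V-\mu)u=f/\langle x\rangle$ by $\bar u/\langle x\rangle$ and integrates by parts, producing $\int|\Delta_{\R^N}u|^2/\langle x\rangle$ on the left and potential terms on the right that are controlled precisely by the quantities $\Delta_{\R^N}(V/\langle x\rangle)$, $\nabla(V'/\langle x\rangle)$ and $(\Delta_{\R^N}V+V^2)/\langle x\rangle$ appearing in \eqref{hyppot}. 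Your route is cleaner where it applies and, tellingly, consumes only $|V|\lesssim 1$ from \eqref{hyppot}; the remaining conditions are exactly what the paper's integration-by-parts route needs.

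There are, however, two genuine gaps. First, you deduce $|\mu|\le C$ from the appearance of $\mu$ in \eqref{hyppot}; that occurrence is evidently a slip (compare \eqref{mainCHass}, where the condition involves $\Delta V+V^2$ alone), and the proposition must deliver the supremum over all of $\C\setminus\R$, since the smoothing estimate derived from it requires the resolvent bound along the entire spectral axis. Your $H^2$ step would in fact survive for large $|\mu|$ because $|\gamma_2|/|\gamma_1-\gamma_2|$ stays bounded, but as written the argument leans on a bounded spectral parameter. Second, and more seriously, your treatment of the coalescence $\gamma_1\to\gamma_2$ (i.e.\ $\mu\to-\beta^2/4$), which is where the $1/(\gamma_2-\gamma_1)$ in \eqref{decom} degenerates, only closes for $\beta>0$, where $-\beta^2/4$ lies in the resolvent set of $P_V$ and a crude spectral-distance bound applies; for $\beta=0$ the coalescence occurs at the spectral threshold and you explicitly leave it open. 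Since the proposition carries no restriction on $\beta$ and the paper's applications include $\beta=0$, the proof cannot be considered complete while that case is flagged as an obstacle rather than dispatched (the paper's own one-line appeal to \eqref{decom} does not visibly resolve this point either, but the burden here is on your argument).
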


\begin{proof}
First, proceeding as in \cite{BaDyweight} (see in particular \cite[(57)]{BaDyweight}), we see thanks to Theorem \ref{thmbd} and  \eqref{decom} that 
$$\sup_{\mu \in \mathbb{C} \setminus \R} \big\| \langle x\rangle ^{-1}  (P_V -\mu)^{-1} \langle x\rangle ^{-1} \big\|_{L^2 \rightarrow H^1}\leq C. $$
Let $(P_V-\mu) u= f/\langle x\rangle$.
 Multiplying by $\bar{u}/\langle x\rangle$ and integrating by parts, we find
\begin{align*}
\int &\frac{|\Delta_{\R^N} u|^2}{\langle x\rangle}\,dx \leq C \int \left(\frac{|\nabla u|^2}{\langle x\rangle^3} + \frac{|u|^2}{\langle x\rangle^5} + \frac{f\bar{u}}{\langle x\rangle^2}\right)\,dx 
+  \int \bigg(\frac{(2V-\beta) |\nabla u|^2}{\langle x\rangle}
\\
& 
+ |u|^2 \left(\frac{1}{2} \Delta_{\R^N}\frac{(2V-\beta)}{\langle x\rangle} + \frac{\Delta_{\R^N} V}{\langle x \rangle} - \nabla V \cdot\nabla \frac{1}{\langle x\rangle}+\frac{\Delta_{\R^N} V +V^2 - \beta V +\mu }{\langle x\rangle} \right)\bigg)\,dx.
\end{align*}

So, if 
\eqref{hyppot} holds, 
then using once more Theorem \ref{thmbd}, we deduce that
$$\sup_{\mu \in \mathbb{C} \setminus \R}\big\|\langle x\rangle ^{-1} (P_V-\mu)^{-1} \langle x\rangle ^{-1} \big\|_{L^2  \rightarrow H^2 }\leq C.$$
\end{proof}

Now we convert our resolvent estimate into smoothing estimate by using the method of Burq, Gerard and Tzvetkov \cite{BGT}.

\begin{prop}
Let $\varphi$ be a solution to
$$i\partial_t \varphi- P_V \varphi = \tilde{h},\ \varphi(0)=\varphi_0.$$
Assume that $V$ satisfies the assumptions of Theorem \ref{thmbd} and \eqref{hyppot}. Then there exists a constant $C>0$ such that for all $\tilde{h}$ with $\langle x\rangle \tilde{h} \in L^2 (\R , L^2)$, we have
$$\big\|\langle x \rangle^{-1} \nabla^{i}\varphi\big\|_{L^2 (\R, L^2 (\R^N))} \leq C \big(\|\varphi_0\|_{H^1 (\R^N)} + \|\langle x\rangle  \tilde{h}\|_{L^2 (\R, L^{2}(\R^N))}  \big),$$
for $i=1,2$ with the convention that $\nabla^2 =\Delta_{\R^N}$.
\end{prop}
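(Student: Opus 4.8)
The plan is to extract the estimate from the uniform resolvent bound of the preceding proposition via Kato's theory of $H$-smooth operators, in the spirit of \cite{BGT,BaDyweight}. By Duhamel's formula $\varphi(t)=e^{-itP_V}\varphi_0-i\int_0^t e^{-i(t-s)P_V}\tilde h(s)\,ds$, so it suffices to prove the homogeneous bound $\|\langle x\rangle^{-1}\nabla^i e^{-itP_V}\varphi_0\|_{L^2(\R,L^2)}\le C\|\varphi_0\|_{H^1}$ together with the inhomogeneous one $\|\langle x\rangle^{-1}\nabla^i\int_0^t e^{-i(t-s)P_V}\tilde h\,ds\|_{L^2(\R,L^2)}\le C\|\langle x\rangle\tilde h\|_{L^2(\R,L^2)}$. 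Moreover, integrating by parts in space gives, for a.e. $t$,
\[
\|\langle x\rangle^{-1}\nabla u\|_{L^2}^2\le \|\langle x\rangle^{-1}u\|_{L^2}\,\|\langle x\rangle^{-1}\Delta_{\R^N} u\|_{L^2}+2\|\langle x\rangle^{-1}u\|_{L^2}\,\|\langle x\rangle^{-1}\nabla u\|_{L^2},
\]
so that, applying this to $u=\varphi(t)$ and absorbing, $\|\langle x\rangle^{-1}\nabla\varphi\|_{L^2(\R,L^2)}^2\lesssim\|\langle x\rangle^{-1}\Delta_{\R^N}\varphi\|_{L^2(\R,L^2)}^2+\|\langle x\rangle^{-1}\varphi\|_{L^2(\R,L^2)}^2$; hence the case $i=1$ follows from the cases $i=0$ and $i=2$, and we only treat those.

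For the functional calculus I use the factorisation $P_V=(-\Delta_{\R^N}-V)(-\Delta_{\R^N}-V+\beta)=S^2+\beta S$ with $S:=-\Delta_{\R^N}-V$. Under the hypotheses (those of Theorem~\ref{thmbd} in particular), $S$ is a non-negative self-adjoint operator on $L^2(\R^N)$, and by Hardy's inequality together with $(\tfrac N2-1)^2+r^2V\ge\delta_0$ the form $\langle Su,u\rangle$ is comparable to $\|\nabla u\|_{L^2}^2$, so $\mathcal{D}((1+S)^{1/2})=H^1(\R^N)$ with equivalent norms; in particular $\|(1+S)^{1/2}\varphi_0\|_{L^2}\le C\|\varphi_0\|_{H^1}$, and $V(1+S)^{-1}$ and $\Delta_{\R^N}(1+S)^{-1}=-I+(1+S)^{-1}-V(1+S)^{-1}$ are bounded on $L^2$. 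Since $P_V=p(S)$ with $p(\lambda)=\lambda^2+\beta\lambda$, the group $e^{-itP_V}$, the resolvents $(P_V-\mu)^{-1}$ and the operators $(1+S)^{\pm1/2}$ all commute with one another.

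The core step is to show that $A_0:=\langle x\rangle^{-1}$, $A_1:=\langle x\rangle^{-1}(1+S)^{1/2}$ and $A_2:=\langle x\rangle^{-1}\Delta_{\R^N}(1+S)^{-1/2}$ are $P_V$-smooth, i.e. $\sup_{\mu\in\C\setminus\R}\|A_j(P_V-\mu)^{-1}A_j^*\|_{L^2\to L^2}<\infty$ for $j=0,1,2$. For $A_0$ this is the preceding proposition composed with $H^2\hookrightarrow L^2$. For $A_1,A_2$, using $A_1^*=(1+S)^{1/2}\langle x\rangle^{-1}$, $A_2^*=(1+S)^{-1/2}\Delta_{\R^N}\langle x\rangle^{-1}$ and the commutation of $(1+S)^{\pm1/2}$ with $(P_V-\mu)^{-1}$,
\[
A_1(P_V-\mu)^{-1}A_1^*=\langle x\rangle^{-1}(1+S)(P_V-\mu)^{-1}\langle x\rangle^{-1},\qquad A_2(P_V-\mu)^{-1}A_2^*=\langle x\rangle^{-1}\Delta_{\R^N}(1+S)^{-1}(P_V-\mu)^{-1}\Delta_{\R^N}\langle x\rangle^{-1}.
\]
In the first, $\langle x\rangle^{-1}(1+S)\langle x\rangle=(1+S)+W$ with $W$ first order with $O(\langle x\rangle^{-1})$ coefficients, so $(1+S)+W\colon H^2\to L^2$ is bounded and uniformity follows from $\langle x\rangle^{-1}(P_V-\mu)^{-1}\langle x\rangle^{-1}\colon L^2\to H^2$. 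In the second, $\Delta_{\R^N}(1+S)^{-1}$ is bounded on $L^2$, while $\Delta_{\R^N}\langle x\rangle^{-1}=\langle x\rangle^{-1}G$ with $G:=\langle x\rangle\Delta_{\R^N}\langle x\rangle^{-1}\colon L^2\to H^{-2}$ bounded; pushing the bounded factors past the outer weights (which produces only lower-order errors carrying extra $\langle x\rangle$-decay, controlled via the regularity of $V$ in \eqref{hyppot}, exactly as in the resolvent proof) reduces the claim to the dual resolvent estimate $\langle x\rangle^{-1}(P_V-\mu)^{-1}\langle x\rangle^{-1}\colon H^{-2}\to L^2$, i.e. the adjoint at $\bar\mu$ of the preceding proposition. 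The main obstacle is precisely this operator-theoretic bookkeeping — verifying uniformity in $\mu$ of all composed bounds and keeping track of the conjugations by $\langle x\rangle$ — where the full hypotheses on $V$ are needed.

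Granting the $P_V$-smoothness of $A_0,A_1,A_2$, Kato's theorem yields $\|A_j e^{-itP_V}g\|_{L^2(\R,L^2)}\le C\|g\|_{L^2}$ and, for any $P_V$-smooth $A,B$, $\big\|A\int_0^t e^{-i(t-s)P_V}B^*F(s)\,ds\big\|_{L^2(\R,L^2)}\le C\|F\|_{L^2(\R,L^2)}$. Taking $g=(1+S)^{1/2}\varphi_0$ in the bound for $A_2$ and using $A_2 e^{-itP_V}(1+S)^{1/2}\varphi_0=\langle x\rangle^{-1}\Delta_{\R^N}e^{-itP_V}\varphi_0$ gives the homogeneous $i=2$ estimate, while $i=0$ is the $A_0$ bound. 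For the inhomogeneous part, apply the Duhamel estimate with $A=A_2$, $B=A_1$, $F=\langle x\rangle\tilde h$: then $B^*F=(1+S)^{1/2}\langle x\rangle^{-1}\langle x\rangle\tilde h=(1+S)^{1/2}\tilde h$, and commuting $(1+S)^{\pm1/2}$ through the propagator gives $A_2\int_0^t e^{-i(t-s)P_V}(1+S)^{1/2}\tilde h\,ds=\langle x\rangle^{-1}\Delta_{\R^N}\int_0^t e^{-i(t-s)P_V}\tilde h\,ds$, bounded by $C\|\langle x\rangle\tilde h\|_{L^2(\R,L^2)}$; the inhomogeneous $i=0$ bound uses $A=B=A_0$ the same way. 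Combining with Duhamel's formula and the reduction of $i=1$ to $i\in\{0,2\}$ completes the proof.
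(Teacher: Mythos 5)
Your proposal is correct and rests on the same underlying mechanism as the paper's proof: the uniform weighted resolvent bound of the preceding proposition is converted into a space--time $L^2$ smoothing estimate by Kato's theory of $H$-smooth operators. The implementations differ in how the derivatives are bookkept. The paper keeps the weight $\langle x\rangle^{-1}$ fixed on both sides, cites the argument of \cite[Prop.\ 2.7]{BGT} to obtain the single estimate $\|\langle x\rangle^{-1}\varphi\|_{L^2(\R,H^2)}\leq C\big(\|\varphi_0\|_{H^1}+\|\langle x\rangle\tilde h\|_{L^2(\R,L^2)}\big)$ --- the derivative gain living entirely in the target space $H^2$ of the resolvent bound --- and then simply commutes $\langle x\rangle^{-1}$ past $\nabla$ and $\Delta_{\R^N}$. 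You instead distribute the derivatives into three auxiliary operators $A_0,A_1,A_2$ built from $(1+S)^{\pm 1/2}$, verify their $P_V$-smoothness separately, and invoke the retarded inhomogeneous estimate with two \emph{different} smooth operators. Both routes work; the paper's is shorter because one resolvent bound ($L^2\to H^2$ uniformly in $\mu$) carries all the derivative information at once, whereas yours needs the conjugation lemmas you acknowledge (in particular the boundedness of $V(1+S)^{-1}$ and of $(1+S)+W\colon H^2\to L^2$, which use the same implicit regularity of $V$ near the origin that the paper's own integration-by-parts computation uses). One point to state carefully: the retarded Duhamel bound $\big\|A\int_0^t e^{-i(t-s)P_V}B^*F\,ds\big\|_{L^2_tL^2_x}\leq C\|F\|_{L^2_tL^2_x}$ with $A\neq B$ sits at the double $L^2_t$ endpoint where the Christ--Kiselev lemma is unavailable; it does not follow formally from the two homogeneous estimates, but it is a known theorem, obtained by Plancherel in time from the uniform bound on $\|A(P_V-\mu)^{-1}B^*\|$ --- which is exactly the route \cite{BGT} take --- so your argument closes once it is quoted in that form.
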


\begin{proof}
Using the previous proposition and proceeding as in the proof of \cite[Prop. 2.7]{BGT}, 
we find that
$$\|\langle x\rangle ^{-1} \varphi\|_{L^2 (\R, H^2)}\leq C (\|\varphi_0\|_{H^1 (\R^N)} +\|\langle x\rangle  \tilde{h}\|_{L^2 (\R, L^{2})}) .$$
Noticing that
$$\|\langle x\rangle ^{-1} \nabla \varphi\|_{L^2 (\R , L^2)}\leq C\|\langle x\rangle ^{-1} \varphi\|_{L^2 (\R , H^1)} ,$$
and
$$\|\langle x\rangle ^{-1} \Delta_{\R^N} \varphi\|_{L^2 (\R , L^2)}\leq C\|\langle x\rangle ^{-1} \varphi\|_{L^2 (\R , H^2)} ,$$
this establishes the proposition.

\end{proof}

We consider 
$$\begin{cases}
i\partial_t \varphi- \Delta_{\R^N}^2 \varphi = \tilde{h} + 2V\Delta_{\R^N} \varphi +2V' \varphi ' +\varphi \tilde{W}\mbox{ in}\ \R \times \R^N ,\\
u(0,\cdot)=u_0 \in H^1 (\R^N),
\end{cases}$$
where $\tilde{W}=\Delta_{\R^N} V +V^2$. Using our smoothing estimate and the Strichartz estimate for the biharmonic Schr\" odinger equation, we will derive a Strichartz estimate for our equation with potentials. We will need the following two sets of assumptions: if we consider an $S$-admissible pair, we assume that
\begin{equation}\label{V-conds-S}
\|\langle x\rangle \tilde{W}\|_{L^{N,\infty}(\R^N)},\  \|\langle x\rangle V^\prime\|_{L^{N,\infty}(\R^N)},\ \|\langle x\rangle V\|_{L^{N,\infty}(\R^N)}\leq C,
\end{equation}
whereas, for a $B$-admissible pair, we assume that
\begin{equation}\label{V-conds-B}
\|\langle x\rangle^2 \tilde{W}\|_{L^{N/2,\infty}(\R^N)},\  \|\langle x\rangle^2 V^\prime\|_{L^{N/2,\infty}(\R^N)},\ \|\langle x\rangle^2 V\|_{L^{N/2,\infty}(\R^N)}\leq C.
\end{equation}

\begin{thm}
Let $\varphi$ be a solution to
$$\begin{cases}
i\partial_t \varphi- P_V  \varphi =\tilde{h}\mbox{ in}\ \R \times \R^N ,\\
u(0,\cdot)=u_0 \in H^1 (\R^N).
\end{cases}$$
Assume that $V$ satisfies the assumptions of Theorem \ref{thmbd}, \eqref{hyppot} and \eqref{V-conds-S} (resp. \eqref{V-conds-B}). Then, there exists a constant $C>0$ such that for all $B$- (resp. $S$-) admissible pairs $(p_i,q_i)$, $i=1,2$,
 we have
$$\|u\|_{L^{p_1}(\R,L^{q_1} (\R^N))} \leq C \big(\|u_0\|_{H^1 (\R^N)} +\|\tilde{h}\|_{L^{p_2^\prime} (\R , L^{q_2^\prime} (\R^N))}\big).$$
\end{thm}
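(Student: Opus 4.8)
The plan is to follow the scheme of \cite{BaDyweight} and treat $P_V$ as a perturbation of the free fourth--order operator. Expanding the factorisation $P_V=(-\Delta_{\R^N}-V)(-\Delta_{\R^N}-V+\beta)$ underlying \eqref{decom}, a radial solution of $i\partial_t\varphi-P_V\varphi=\tilde h$ also solves
\begin{equation*}
i\partial_t\varphi-\Delta_{\R^N}^2\varphi+\beta\Delta_{\R^N}\varphi=\tilde h+G[\varphi],\qquad
G[\varphi]=2V\Delta_{\R^N}\varphi+2V'\varphi'+(\Delta_{\R^N}V+V^2-\beta V)\varphi,
\end{equation*}
so that, by Duhamel's formula, $\varphi(t)=e^{it\mathcal{L}}\varphi_0-i\int_0^t e^{i(t-s)\mathcal{L}}\big(\tilde h+G[\varphi]\big)(s)\,ds$, where $\mathcal{L}=\Delta_{\R^N}^2-\beta\Delta_{\R^N}$ is the free mixed--dispersion biharmonic operator. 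Since $\beta\ge0$ here, Pausader's Strichartz estimates \eqref{strireal} (and their $S$--admissible counterpart) hold globally in time for $\mathcal{L}$. The zeroth--order term $\beta V\varphi$ is of exactly the same type as the other potential terms, so it is harmless to keep it inside $G[\varphi]$.

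The contributions of $\varphi_0$ and $\tilde h$ are handled directly by \eqref{strireal}: for a $B$--admissible pair $(p_1,q_1)$ one gets $\|e^{it\mathcal{L}}\varphi_0\|_{L^{p_1}_tL^{q_1}_x}\lesssim\|\varphi_0\|_{L^2}\le\|\varphi_0\|_{H^1}$ and $\big\|\int_0^t e^{i(t-s)\mathcal{L}}\tilde h\,ds\big\|_{L^{p_1}_tL^{q_1}_x}\lesssim\|\tilde h\|_{L^{p_2'}_tL^{q_2'}_x}$, while for an $S$--admissible pair one uses the version of \eqref{strireal} that costs $2/p_1\le1$ derivatives, which is absorbed by the extra $H^1$ regularity of the data. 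What remains is to bound the Duhamel term carrying the potential, $\big\|\int_0^t e^{i(t-s)\mathcal{L}}G[\varphi]\,ds\big\|_{L^{p_1}_tL^{q_1}_x}$.

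For this term the crucial tool, again following \cite{BaDyweight}, is a \emph{mixed} Strichartz--smoothing inhomogeneous estimate, in which the Strichartz pair $(p_1,q_1)$ on the output side is paired with a local--smoothing norm of the source. It is obtained by the $TT^*$/Christ--Kiselev machinery from the dispersive bound for $e^{it\mathcal{L}}$ together with the dual local smoothing estimate for $\mathcal{L}$, whose $3/2$--derivative gain (the symbol of $\mathcal{L}$ being $|\xi|^4+\beta|\xi|^2$) is exactly what allows $G[\varphi]$, which loses at most one derivative relative to $\varphi$, to be accommodated. Concretely, since $|V(x)|\lesssim\langle x\rangle^{-2}$, each summand of $G[\varphi]$ has the schematic form $(\langle x\rangle^{2}V)\cdot(\langle x\rangle^{-1}\nabla^{\le 2}\varphi)$, with $\nabla^{\le 2}$ meaning $\nabla$ or $\Delta_{\R^N}$ (and with $V'$, $\Delta_{\R^N}V+V^2-\beta V$ in place of $V$); Hölder in Lorentz spaces together with hypotheses \eqref{V-conds-B} (resp.\ \eqref{V-conds-S}) puts $\langle x\rangle^{2}V$ in $L^{N/2,\infty}$ (resp.\ $\langle x\rangle V$ in $L^{N,\infty}$) and leaves $\langle x\rangle^{-1}\nabla^{\le 2}\varphi$ to be controlled by the local smoothing estimate proved above, i.e. $\|\langle x\rangle^{-1}\nabla^i\varphi\|_{L^2(\R,L^2)}\lesssim\|\varphi_0\|_{H^1}+\|\langle x\rangle\tilde h\|_{L^2(\R,L^2)}$ for $i=1,2$, which itself descends from Theorem~\ref{thmbd}, the decomposition \eqref{decom}, the assumption \eqref{hyppot}, and the Burq--G\'erard--Tzvetkov method \cite{BGT}. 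To end up with only $\|\tilde h\|_{L^{p_2'}L^{q_2'}}$ on the right--hand side, I would run the argument as an a priori bound for the coupled quantity $\|\varphi\|_{L^{p_1}_tL^{q_1}_x}+\|\langle x\rangle^{-1}\nabla^{\le 2}\varphi\|_{L^2(\R,L^2)}$, estimating the $\tilde h$--Duhamel piece also in the smoothing norm by $\|\tilde h\|_{L^{p_2'}L^{q_2'}}$ via the dual mixed estimate, and closing by absolute continuity of the norms (or, locally in time / for small data, by a contraction exactly as in the hyperbolic case).

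The step I expect to be the main obstacle is precisely this mixed Strichartz--smoothing estimate and the attendant exponent bookkeeping. One must simultaneously (i) keep the two derivatives appearing in $2V\Delta_{\R^N}\varphi$ within the smoothing budget of the fourth--order propagator — this is why the smoothing proposition is formulated with $\nabla^2=\Delta_{\R^N}$ and $H^1$, rather than $L^2$, data — and (ii) distribute the powers of $\langle x\rangle$ between $V$ and $\varphi$ so that the remaining factors land on an admissible dual pair with the correct time exponent to be paired with the $L^2_t$ of the smoothing estimate; the scaling conditions \eqref{V-conds-B} and \eqref{V-conds-S} are calibrated exactly so that (ii) goes through in the $B$-- and $S$--admissible ranges. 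Once this estimate is in hand, the rest is routine Duhamel/$TT^*$ bookkeeping.
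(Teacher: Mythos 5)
Your proposal follows essentially the same route as the paper: rewrite the equation as the free mixed-dispersion biharmonic equation with source $\tilde h+G[\varphi]$, bound the potential terms by H\"older's inequality in Lorentz spaces using \eqref{V-conds-B} (resp.\ \eqref{V-conds-S}) together with the weighted smoothing estimate $\|\langle x\rangle^{-1}\nabla^i\varphi\|_{L^2(\R,L^2)}\lesssim\|\varphi_0\|_{H^1}+\|\langle x\rangle\tilde h\|_{L^2(\R,L^2)}$ coming from Theorem \ref{thmbd}, \eqref{decom} and \eqref{hyppot}, and then upgrade the right-hand side to $\|\tilde h\|_{L^{p_2'}L^{q_2'}}$ via the dual mixed estimate. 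The only cosmetic differences are that the paper closes the $\tilde h$-contribution purely by duality on the Duhamel operator $A$ (no bootstrap, contraction, or continuity argument is needed, since the smoothing estimate already controls the potential terms directly by the data rather than by a small multiple of the solution norm) and then reaches general admissible pairs by interpolating the resulting endpoint estimate with the mass-conservation bounds.
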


\begin{proof}
We start by considering the $B$-admissible pairs case.
Using the standard end point Strichartz estimate in Lorentz space, we have
\begin{equation}
\label{stripausa}
\|\varphi\|_{L^2 (\R, L^{\frac{2N}{(N-4)_+},2 })} \leq \|\varphi_0\|_{L^2 (\R^N)} + \| \tilde{h} + 2V\Delta_{\R^N} \varphi +2V' \varphi' +\varphi\tilde{W}\|_{L^2 (\R; L^{\frac{2N}{N+4} ,2})}. 
\end{equation}
We recall the H\"older's inequality in Lorentz space: let $0<p_1,p_2,p<\infty$ and  $0<q_1,q_2, q \leq \infty$ such that  $\frac{1}{p}= \frac{1}{p_1}+\frac{1}{p_2}$ and  $\frac{1}{q}=\frac{1}{q_1}+\frac{1}{q_2}$, then
$$\|fg\|_{L^{p,q}} \leq \|f\|_{L^{p_1,q_1}}\|g\|_{L^{p_2 , q_2}}.$$
So, we have, using the previous proposition and 
\eqref{V-conds-B},  
that
$$ \| \tilde{h} \|_{L^2 (\R ; L^{\frac{2N}{N+4} ,2})} \leq \|\langle x\rangle ^{-2}\|_{L^{N/2,\infty}} \|\langle x\rangle^2  \tilde{h}\|_{L^2 (\R ; L^2)}\leq  \|\langle x\rangle^2  \tilde{h}\|_{L^2 (\R ; L^2)},$$
$$ \| \varphi\tilde{W} \|_{L^2 (\R ; L^{\frac{2N}{N+4} ,2})} \leq \|\langle x\rangle^2 \tilde{W}\|_{L^{N/2,\infty}} \|\langle x\rangle ^{-2} \varphi\|_{L^2 (\R ; L^2)}\leq C \big(\|\varphi_0\|_{L^2 } + \|\langle x\rangle^2  \tilde{h}\|_{L^2 (\R ; L^2)}\big), $$
$$ \| V' \varphi' \|_{L^2 (\R ; L^{\frac{2N}{N+4} ,2})} \leq \|\langle x\rangle^2 V' \|_{L^{N/2,\infty}} \|\langle x\rangle ^{-2} \varphi' \|_{L^2 (\R ; L^2)}\leq C \big(\|\varphi_0\|_{L^2} + \|\langle x\rangle^2  \tilde{h}\|_{L^2 (\R ; L^2)}\big) ,$$
and
\begin{align*}
 \| V\Delta_{\R^N} \varphi \|_{L^2 (\R ; L^{\frac{2N}{N+4} ,2})}& \leq \|\langle x\rangle^2 V \|_{L^{N/2,\infty}} \|\langle x\rangle ^{-2} \Delta_{\R^N} \varphi \|_{L^2 (\R ; L^2)} \\
&\leq C  \big(\|u_0\|_{H^1} + \|\langle x\rangle^2  \tilde{h}\|_{L^2 (\R ; L^2)}\big) .
\end{align*}
Therefore, we get that
\begin{equation}
\label{endpointstr}
\|\varphi\|_{L^2 (\R , L^{\frac{2N}{(N-4)_+},2 })} \leq  C  \big(\|\varphi_0\|_{H^1} + \|\langle x\rangle^2  \tilde{h}\|_{L^2 (\R ; L^2)}\big) .
\end{equation}
Let $$A(f)(t,x)=i\int_0^t e^{i(t-\tau)P} f(\tau,x)\, d\tau.$$
Let $g\in C_0^\infty (\R \times \R^N)$ and  $T>0$ such that $\supp\ g \subset (-T,T) \times \R^N$. Then
$$A^\ast (g)(t,x)=i\int_t^T e^{i(t-\tau)P}g(\tau ,x)\, d\tau .$$
So, we get from \eqref{endpointstr} that
$$\|A^\ast (g)\|_{L^2 (\R, L^{\frac{2N}{(N-4)_+},2 })} \leq C \|\langle x\rangle^2  g\|_{L^2 (\R ; L^2)}.$$
In fact, we have $ \|\langle x\rangle ^{-1} \varphi\|_{L^2 (\R; L^2)} \leq  \|\langle x\rangle  \tilde{h}\|_{L^2 (\R ; H^{-2})} $ (this is the dual of the previous proposition with $\psi(0)\equiv 0$) which implies that
$$\|A^\ast (g)\|_{L^2 (\R, L^{\frac{2N}{(N-4)_+},2 })} \leq C \|\langle x\rangle^2  g\|_{L^2 (\R; H^{-2})}.$$
By duality,
$$\|\langle x\rangle ^{-2}A(f) \|_{L^2 (\R; H^2)} \leq C \|f\|_{L^2 (\R , L^{\frac{2N}{N+4},2 })}. $$
We deduce from this that, for $i=1,2$, 
$$\|\langle x\rangle ^{-2} \nabla^{i} A(f) \|_{L^2 (\R; L^2)} \leq C \|f\|_{L^2 (\R , L^{\frac{2N}{N+4},2 })}. $$
This implies that
\begin{equation}
\label{CHstri1}
\|\varphi\|_{L^2 (\R , L^{\frac{2N}{(N-4)_+},2 })} \leq C\big( \|\varphi_0\|_{H^1} +   \|\tilde{h}\|_{L^2 (\R , L^{\frac{2N}{N+4},2 })}\big).
\end{equation}
Next, using $\dfrac{d}{dt}\int |\varphi|^2 dx = \im \int \tilde{h} \bar{\varphi}\, dx$, we get that
\begin{equation}
\label{CHstri2}
\|\varphi\|_{L^\infty (\R ,L^2)} \leq  C\big( \|\varphi_0\|_{L^2} +   \|\tilde{h}\|_{L^1 (\R, L^{2}}\big), 
\end{equation}
and
\begin{equation}
\label{CHstri3}
\|\varphi\|_{L^\infty (\R ,L^2)} \leq  C\big( \|\varphi_0\|_{H^1} +    \|\tilde{h}\|_{L^2 (\R, L^{\frac{2N}{N+4} })} \big).
\end{equation}
Interpolating \eqref{CHstri1}, \eqref{CHstri2}, \eqref{CHstri3} as well as their dual, we get the result for $B$-admissible pairs. The result for $S$-admissible pairs follows in a similar way but instead of \eqref{stripausa}, we use 
$$\|\varphi\|_{L^2 (\R, L^{\frac{2N}{N-2},2 })} \leq \|\varphi_0\|_{L^2} + \| \tilde{h} + 2V\Delta_{\R^N} \varphi +2V' \varphi ' +\varphi \tilde{W}\|_{L^2 (\R; L^{\frac{2N}{N+2} ,2})}. $$
Since $\|\langle x\rangle ^{-1}\|_{L^{N,\infty}}<\infty$, we use H\" older's inequality but this time with weight $\langle x\rangle$ and \eqref{V-conds-S} to find
$$ \| \tilde{h} \|_{L^2 (\R ; L^{\frac{2N}{N+2} ,2})} \leq \|\langle x\rangle ^{-1}\|_{L^{N,\infty}} \|\langle x\rangle  \tilde{h}\|_{L^2 (\R ; L^2)}\leq  \|\langle x\rangle  \tilde{h}\|_{L^2 (\R ; L^2)},$$
$$ \| \varphi\tilde{W} \|_{L^2 (\R ; L^{\frac{2N}{N+2} ,2})} \leq \|\langle x\rangle \tilde{W}\|_{L^{N,\infty}} \|\langle x\rangle ^{-1} \varphi\|_{L^2 (\R ; L^2)}\leq C \big(\|\varphi_0\|_{L^2} + \|\langle x\rangle  \tilde{h}\|_{L^2 (\R ; L^2)}\big), $$
$$ \| V' \varphi ' \|_{L^2 (\R ; L^{\frac{2N}{N+2} ,2})} \leq \|\langle x\rangle V' \|_{L^{N,\infty}} \|\langle x\rangle ^{-1} \varphi ' \|_{L^2 (\R ; L^2)}\leq C \big(\|\varphi_0\|_{L^2} + \|\langle x\rangle  \tilde{h}\|_{L^2 (\R ; L^2)}\big) ,$$
and
\begin{align*}
\| V\Delta_{\R^N} \varphi \|_{L^2 (\R ; L^{\frac{2N}{N+2} ,2})} &\leq \|\langle x\rangle V \|_{L^{N,\infty}} \|\langle x\rangle ^{-1} \Delta_{\R^N} \varphi \|_{L^2 (\R ; L^2)} \\
&\leq C  \big(\|\varphi_0\|_{H^1} + \|\langle x\rangle  \tilde{h}\|_{L^2 (\R ; L^2)}\big) .
\end{align*} 
At this point, we can reproduce the above proof with obvious modifications.
\end{proof}

Finally, we use a change of variables to apply the previous result to our original equation.
\begin{cor}
Assume that $V$ satisfies the assumptions of Theorem \ref{thmbd}, \eqref{hyppot} and \eqref{V-conds-B} (resp. \eqref{V-conds-S}). Let $\psi$ be a radial solution to
$$\begin{cases}
i\partial_t \psi-\Delta_M^2 \psi +\beta \Delta_M \psi= h
\mbox{ in}\ \R \times M,\\
 \psi(0,\cdot)=\psi_0 \in H^1 (M),
\end{cases}$$
 on an interval $I=[0,T]$.
Then, there exists $C>0$ such that, for all interval $I$, and all $B$- (resp. $S$-) admissible pairs $(p_i ,q_i)$, we have
\begin{align*}
&\big\|\psi \sigma^{-(1-2/q_1)}\big\|_{L^{p_1} (I, L^{q_1} (M)) }\\
&\quad \leq C \big(\|\nabla_{M} \psi_0\|_{L^2 (M)} +\|\psi_0 |\nabla_{M} \sigma|/\sigma \|_{L^2 (M)} + \|h\sigma^{(1- 2/q_2)} \|_{L^{p_2^\prime} (I, L^{q_2^\prime } (M))}  \big).
\end{align*}
\end{cor}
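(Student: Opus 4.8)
The plan is to deduce the Corollary from the preceding Theorem by the substitution $\psi=\sigma\varphi$ used throughout this section, carrying first the equation and then all three norms through the change of variables between $(M,g)$ and $\R^N$. Recall the operator identity established above, $\Delta_M^2\psi-\beta\Delta_M\psi=\sigma P_V\varphi$ with $P_V=(-\Delta_{\R^N}-V)(-\Delta_{\R^N}-V+\beta)$. Writing $\varphi=\psi/\sigma$ and $\varphi_0=\psi_0/\sigma$, the equation $i\partial_t\psi-\Delta_M^2\psi+\beta\Delta_M\psi=h$ becomes $\sigma\big(i\partial_t\varphi-P_V\varphi\big)=h$, i.e. $i\partial_t\varphi-P_V\varphi=\tilde h:=h/\sigma$ with $\varphi(0)=\varphi_0$. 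Provided $\varphi_0\in H^1(\R^N)$ (which is part of the norm bookkeeping below), the preceding Theorem applies and yields
\[
\|\varphi\|_{L^{p_1}(I,L^{q_1}(\R^N))}\le C\big(\|\varphi_0\|_{H^1(\R^N)}+\|\tilde h\|_{L^{p_2'}(I,L^{q_2'}(\R^N))}\big)
\]
for the admissible pairs in the relevant class ($B$- or $S$-admissible according to the hypothesis imposed on $V$); restricting the estimate of the Theorem, stated on $\R$, to $I$ causes no difficulty.

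The second step is to match each norm. In geodesic polar coordinates the Riemannian measure is $d\mu=\phi^{N-1}(r)\,dr\,d\theta=\big(r/\phi(r)\big)^{N-1}r^{N-1}\,dr\,d\theta=\sigma(r)^{-2}\,dx$, where $dx$ is Lebesgue measure on $\R^N$. From this one gets the exact identities $\|\psi\sigma^{-(1-2/q_1)}\|_{L^{q_1}(M)}=\|\varphi\|_{L^{q_1}(\R^N)}$ and $\|h\sigma^{1-2/q_2}\|_{L^{q_2'}(M)}=\|\tilde h\|_{L^{q_2'}(\R^N)}$ — the precise powers of $\sigma$ being forced by the elementary relation $q'(1-1/q)=1$ — and the time integrations are untouched. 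For the data term one uses the product rule $\nabla_M\psi_0=\sigma\,\nabla_{\R^N}\varphi_0+\varphi_0\,\nabla_{\R^N}\sigma$ (for radial functions $|\nabla_M\cdot|=|\partial_r\cdot|$ since $g_{rr}=1$); expanding $|\nabla_M\psi_0|^2$ and integrating against $d\mu=\sigma^{-2}\,dx$ gives
\[
\|\nabla_M\psi_0\|_{L^2(M)}^2=\|\nabla_{\R^N}\varphi_0\|_{L^2(\R^N)}^2+\Big\|\psi_0\tfrac{|\nabla_M\sigma|}{\sigma}\Big\|_{L^2(M)}^2+2\int_{\R^N}\tfrac{\sigma'}{\sigma}\,\Re(\overline{\varphi_0}\,\partial_r\varphi_0)\,dx,
\]
and Cauchy--Schwarz together with Young's inequality on the cross term (absorbing $\tfrac12\|\nabla_{\R^N}\varphi_0\|_{L^2}^2$) yields $\|\nabla_{\R^N}\varphi_0\|_{L^2(\R^N)}\le C\big(\|\nabla_M\psi_0\|_{L^2(M)}+\|\psi_0|\nabla_M\sigma|/\sigma\|_{L^2(M)}\big)$. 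The remaining $L^2$-part of the $H^1$-norm, $\|\varphi_0\|_{L^2(\R^N)}=\|\psi_0\|_{L^2(M)}$, is controlled in the same spirit (a Hardy-type inequality, as in \cite{BaDyweight}). Substituting these relations into the estimate for $\varphi$ gives the stated inequality.

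I expect the only real work to be the weight bookkeeping: tracking the exact power of $\sigma$ relating each $L^q(M)$ norm to the corresponding $L^q(\R^N)$ norm, and handling the cross term in the data estimate so that it is genuinely dominated by the two norms appearing on the right-hand side. Everything else — the reduction of the PDE, the measure identity $d\mu=\sigma^{-2}\,dx$, and the application of the preceding Theorem — is essentially formal once the change of variables is in place. A minor point to check is that $\varphi_0\in H^1(\R^N)$ before invoking the Theorem, but this is precisely the content of the data-norm conversion just described.
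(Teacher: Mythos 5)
Your proposal is correct and follows essentially the same route as the paper: reduce to $i\partial_t\varphi - P_V\varphi = h/\sigma$ via $\psi=\sigma\varphi$, apply the preceding theorem, and convert the norms using $d\mu=\sigma^{-2}\,dx$. The only (harmless) difference is in the data term, where the paper simply expands $\nabla(\psi_0/\sigma)$ by the triangle inequality instead of expanding $\nabla_M\psi_0$ and absorbing a cross term by Young's inequality; both yield the same bound (and both, like the paper, leave the $L^2$-part of $\|\psi_0/\sigma\|_{H^1(\R^N)}$ implicit).
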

\begin{proof}
Using the previous change of variables and the previous theorem, we know that
$$\|\psi /\sigma\|_{L^{p_1} (I, L^{q_1} (\R^N)) } \leq C \big(\|\psi_0/\sigma\|_{H^1 (\R^N)} + \|h /\sigma \|_{L^{p_2^\prime} (I, L^{q_2^\prime } (\R^N))}  \big).$$
The corollary then follows from the remark that
$$\|h/\sigma\|^q_{L^q (\R^N)}=\|h \sigma^{2/q -1}\|^q_{L^q (M)} ,$$
and
\begin{align*}
\|h/\sigma\|^2_{H^1 (\R^N)} & \leq \int_0^\infty \left(\frac{|\nabla h|^2}{\sigma^2} +\frac{h^2 |\nabla \sigma|^2}{\sigma^4}\right) r^{N-1}\,dr \\
& = \int_0^\infty \left(|\nabla h|^2 +h^2 \dfrac{|\nabla \sigma|^2}{\sigma^2}\right)\phi^{N-1}\, dr .
\end{align*}

\end{proof}

\section{Blow-up}
\label{secblowup}
In this section, we work on a complete rotationally symmetric Riemannian manifold $M=(\R^N,g)$ equipped with the metric $g =dr^2 +\phi^2(r) d\theta^2$. 
 We assume that $\phi(r)\ge r$ for every $r\ge 0$. To simplify notation, we denote by $\Delta$ the Laplace-Beltrami operator in $M$ and by $f^\prime= \dfrac{\partial}{\partial r}f$, for $f\in C^\infty (M)$. We assume that there exists a radial function $\varphi_R$ satisfying the following properties: 
\begin{itemize}
\item There exists a constant $\gamma>0$ such that
$\Delta \varphi_R (r)=\gamma$ for  $r\leq R$.
\item We have $\varphi_R'' (r) \leq 1$ for all $r\geq 0$.
\item There holds
$|\Delta \varphi_R (r) -\gamma| \leq C$ for every  $r\geq $0.
\item We have
$$ -F_2 - 16  \big((\Delta \varphi_R^\prime)' +(N-1)\frac{\phi'}{\phi} \Delta \varphi_R^\prime \big) +8(N-1)  \big(\frac{\phi'}{\phi}\big)' + \frac{3}{2} \big(F_1 ' + (N-1)\frac{\phi'}{\phi} F_1\big)  \leq o_R (1)$$
and
\begin{align*}
&8G -G_1 +\dfrac{1}{2} \big(\Delta F_2 +(N-1)\frac{\phi'}{\phi} F_2 ' +(N-1)^2 \big(\frac{\phi'}{\phi}\big)^2 F_2 + (N-1) \big(\frac{\phi'}{\phi}\big)' F_2\\
&\ \ \ \ \ - F_3 '- (N-1)\frac{\phi'}{\phi} F_3 \big)+\Delta^3 \varphi_R -\beta \Delta^2 \varphi_R \leq o_R (1).
\end{align*}
\item  $\varphi_R (r) \equiv C$ when $r\geq 10R$.
\end{itemize}
The functions $F_1,F_2,F_3,G$ and $G_1$ are defined in the following. We refer to \cite{bou-lenz} for a choice of admissible function $\varphi_R$ in the Euclidean setting and to the Maple file in \cite{maple} to produce examples of $\varphi_R$ in a non-flat case.

We define the localized virial of $u\in H^2(M)$ as
\begin{equation}\label{def-vir}
M_{\varphi_R} (u)=\big\langle u, -i (\langle\nabla \varphi_R, \nabla u\rangle + u\Delta \varphi_R)\big\rangle = 2 \im \int_{M} \bar{u} \langle\nabla \varphi_R, \nabla u\rangle\, dV.
\end{equation} 
The following time evolution inequality for $M_{\varphi_R}$ generalizes \cite[Lemma 3.1]{bou-lenz}.
\begin{lem}\label{locvirest}
Let $N\geq 2$ and $R>0$. Suppose that $u\in C([0,T);H^2 (M))$ is a radial solution to \eqref{4NLS}. Then, for any $t\in [0,T)$, we have
\begin{align*}
\dfrac{d}{dt}M_{\varphi_R}& \big(u(t)\big) \leq 4 \sigma \gamma E (u_0)+(8-2\sigma \gamma) \int_M |\Delta u|^2\, dV +\beta (4-2\sigma \gamma ) \int_M |\nabla u|^2\, dV \\
&+ 4\mu \int_{M} |\nabla u|^2 (\varphi_R'' -1)\,dV +o_R \big(1+\|\nabla u\|^2_{L^2(M)}\big)+O\big( R^{-\sigma (N-1)} \|\nabla u\|_{L^2 (M)}^{\sigma}\big). 
\end{align*}
\end{lem}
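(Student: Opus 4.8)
The plan is to differentiate $M_{\varphi_R}(u(t))$ in time, substitute $\partial_t u$ from \eqref{4NLS}, and reorganise the outcome into the stated inequality, following the scheme of \cite[\S 3]{bou-lenz} but carrying along the curvature terms. Write the (nonlinear) Hamiltonian as $Lu=-\Delta^2 u+\beta\Delta u+\lambda|u|^{2\sigma}u$, so $i\partial_t u=Lu$, and let $\Gamma_{\varphi_R}=-i\big(\langle\nabla\varphi_R,\nabla\,\cdot\,\rangle+\tfrac12(\Delta\varphi_R)\big)$ be the formally self-adjoint generator attached to the gradient of $\varphi_R$, so that $M_{\varphi_R}(u)=\langle u,\Gamma_{\varphi_R}u\rangle$. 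Then, for smooth rapidly decaying solutions,
\begin{equation*}
\frac{d}{dt}M_{\varphi_R}\big(u(t)\big)=\big\langle u,\,i[L,\Gamma_{\varphi_R}]\,u\big\rangle,
\end{equation*}
the nonlinear part of the commutator contributing only through the multiplication operator $\lambda|u|^{2\sigma}$ (the pure phase part cancels). Since $\nabla\varphi_R$ is compactly supported ($\varphi_R\equiv C$ for $r\ge 10R$), $M_{\varphi_R}$ is well defined and $C^1$ in $t$ on $C([0,T);H^2(M))$, and this identity extends to the solutions in the statement by a standard regularisation argument, all integrations by parts being legitimate thanks to the compact support of $\nabla\varphi_R$.

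Next I would compute the three commutators. The term $i[\beta\Delta,\Gamma_{\varphi_R}]$ gives the classical Morawetz expression $\beta\big(4\int_M\nabla^2\varphi_R(\nabla u,\overline{\nabla u})\,dV-\int_M(\Delta^2\varphi_R)|u|^2\,dV\big)$; for a radial $u$ one has $\nabla u=u'\partial_r$, hence $\nabla^2\varphi_R(\nabla u,\overline{\nabla u})=\varphi_R''|\nabla u|^2$, and writing $\varphi_R''=1+(\varphi_R''-1)$ splits off the term $4\mu\int_M|\nabla u|^2(\varphi_R''-1)\,dV$ of the statement. The term $i[\lambda|u|^{2\sigma},\Gamma_{\varphi_R}]$ produces $-\tfrac{2\sigma\lambda}{\sigma+1}\int_M(\Delta\varphi_R)|u|^{2\sigma+2}\,dV$. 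The substantial computation is $i[-\Delta^2,\Gamma_{\varphi_R}]$: integrating by parts repeatedly on $M$, and using rotational symmetry to express every angular contribution through $\phi'/\phi$ and its derivatives, one rewrites it as $8\int_M|\Delta u|^2\,dV$, plus the zeroth order term $-\int_M(\Delta^3\varphi_R-\beta\Delta^2\varphi_R)|u|^2\,dV$, plus a finite family of remainder integrals whose radial coefficients are exactly the functions $F_1,F_2,F_3,G,G_1$ built from $\varphi_R$ and $\phi'/\phi$. The two ``star'' hypotheses are precisely the assertions that on $\{r>R\}$ the particular combinations of these coefficients multiplying $|\nabla u|^2$, resp. $|u|^2$, are $o_R(1)$; checking this bookkeeping is the role of the Maple file \cite{maple}.

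I would then split each integral over $M$ into its parts over $\{r\le R\}$ and $\{r>R\}$. On $\{r\le R\}$, where $\Delta\varphi_R\equiv\gamma$, the nonlinear term equals $-\tfrac{2\sigma\gamma\lambda}{\sigma+1}\int_{\{r\le R\}}|u|^{2\sigma+2}\,dV$; completing this integral to all of $M$ costs, by $|\Delta\varphi_R-\gamma|\le C$ and the radial Sobolev inequality on $M$ (valid since $\phi(r)\ge r$, giving $|u(r)|\lesssim r^{-(N-1)/2}\|u\|_{H^1(M)}$, hence $\int_{\{r>R\}}|u|^{2\sigma+2}\,dV\lesssim R^{-\sigma(N-1)}\|u\|_{L^2(M)}^{\sigma+2}\|\nabla u\|_{L^2(M)}^{\sigma}$, and then mass conservation), a term $O\big(R^{-\sigma(N-1)}\|\nabla u\|_{L^2(M)}^{\sigma}\big)$. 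On $\{r>R\}$ the remainder integrals from $i[-\Delta^2,\Gamma_{\varphi_R}]$ and $i[\Delta,\Gamma_{\varphi_R}]$ are bounded, using the two ``star'' inequalities and $\varphi_R''\le 1$, by $o_R\big(1+\|\nabla u\|_{L^2(M)}^2\big)$. Finally I would use energy conservation $E(u(t))=E(u_0)$, with $E(u)=\tfrac12\int_M|\Delta u|^2+\tfrac{\beta}{2}\int_M|\nabla u|^2-\tfrac{\lambda}{2\sigma+2}\int_M|u|^{2\sigma+2}$, to replace $-\tfrac{2\sigma\gamma\lambda}{\sigma+1}\int_M|u|^{2\sigma+2}$ by $4\sigma\gamma E(u_0)-2\sigma\gamma\int_M|\Delta u|^2-2\sigma\gamma\beta\int_M|\nabla u|^2$; combining with the $8\int_M|\Delta u|^2$ and the $4\beta\int_M|\nabla u|^2$ produced above yields the coefficients $(8-2\sigma\gamma)$ and $\beta(4-2\sigma\gamma)$, and collecting the errors gives the claimed bound.

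The main obstacle is the curved biharmonic commutator $i[-\Delta^2,\Gamma_{\varphi_R}]$: unlike in $\R^N$ it generates many lower-order contributions involving the warping function and its derivatives, and these have to be regrouped so that precisely the combinations occurring in the two ``star'' hypotheses emerge — reconciling the radial reductions with the conventions of \cite{bou-lenz} and the Maple file is where most of the work lies. A secondary difficulty is the far-field analysis on $\{r>R\}$, where one may lose only $o_R(1)$, not $O(1)$; this forces one to exploit the precise decay of the derivatives of $\varphi_R$ built into its construction together with the radial Sobolev inequality on $M$.
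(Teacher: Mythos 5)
Your proposal is correct and follows essentially the same route as the paper: the decomposition $\tfrac{d}{dt}M_{\varphi_R}=\langle u,i[L,\Gamma_{\varphi_R}]u\rangle$ split into the biharmonic, Laplacian and nonlinear commutators, the regrouping of the curved lower-order terms into the coefficients $F_1,F_2,F_3,G,G_1$ controlled by the two ``star'' hypotheses, the Strauss inequality together with $\phi(r)\ge r$ and mass conservation for the tail of the nonlinear term, and energy conservation to produce the coefficients $(8-2\sigma\gamma)$ and $\beta(4-2\sigma\gamma)$. The only (immaterial) deviation is the factor $\tfrac12$ in front of $\Delta\varphi_R$ in your definition of $\Gamma_{\varphi_R}$, which the paper omits.
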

Above 
\[
 o_R \big( 1 + \|\nabla u\|_{L^2 (M)}^2\big) 
 \] 
 denotes quantities such that 
 \[
 \lim_{R\rightarrow \infty}\frac{ o_R \big( 1 + \|\nabla u\|_{L^2 (M)}^2\big) }{ 1 + \|\nabla u\|_{L^2 (M)}^2 } =0 .
 \]
\begin{proof}
We set $\Gamma_{\varphi_R}=-i \big(\langle\nabla \varphi_R, \nabla (\cdot)\rangle +\Delta \varphi_R\big)$. Taking the time derivative of \eqref{def-vir} and noticing that $i\partial_t u$ is given by \eqref{4NLS}, we get
$$\dfrac{d}{dt}M_{\varphi_R} \big(u(t)\big)=A_1\big(u(t)\big) +A_2\big(u(t)\big) +B\big(u(t)\big),$$
where
\[
A_1(u) = \big\langle u, [\Delta^2 , i\Gamma_{\varphi_R}] u\big\rangle,\quad
A_2 (u) = \big\langle u, [-\beta\Delta , i\Gamma_{\varphi_R}] u\big\rangle,
\]
and
\[
B(u) =  \big\langle u, [-|u|^p , i\Gamma_{\varphi_R}] u\big\rangle.
\]
First, we deal with $A_2$. After a long but straight-forward computation, we have that
\begin{align*}
& [\Delta , i\Gamma_{\varphi_R}] u\\
&=[\Delta  (\varphi_R ' u' +\Delta \varphi_R u) -\varphi_R ' (\Delta u)' - \Delta \varphi_R \Delta u]+\Delta \langle\nabla u, \nabla \varphi_R\rangle - \langle\nabla \varphi_R,\nabla \Delta u\rangle \\
&= 4 u'' \varphi_R '' + 4u' \Delta \varphi_R ' + u \Delta^2 \varphi_R.
\end{align*}
Integrating by parts, we deduce from the previous line that
\begin{align*}
A_2 (u)&=
-\beta \int_{M} u (4 u'' \varphi_R '' +4u' \Delta \varphi_R '  + u\Delta^2 \varphi_R)\, dV\\
&= 4\beta \int_{M} (u')^2 \varphi_R ''\, dV-\beta \int_{M} u^2  \Delta^2 \varphi_R\, dV\\
&=4\beta \int_{M} |\nabla u|^2\, dV+ 4\beta \int_M (u')^2 (\varphi_R'' -1 )\,dV -\beta \int_{M} u^2  \Delta^2 \varphi_R\, dV . 
\end{align*}
Next, we deal with $A_1$. We have
\begin{align*}
[\Delta^2 , i\Gamma_{\varphi_R}] u
&=\big[\Delta^2  (\varphi_R ' u' +\Delta \varphi_R u) -\varphi_R ' (\Delta^2 u)' - \Delta \varphi_R \Delta^2 u\big]+\Delta^2 (u' \varphi_R') -  \varphi_R' (\Delta^2 u)'\\
&=8 \varphi_R '' u^{(4)} + u^{(3)} \big(-2\varphi_R' A' +4\varphi_R'' A +4 \Delta \varphi_R ' +8 \varphi_R^{(3)} +4 (\Delta \varphi_R)' \big)\\
&\quad + u'' \Big(- 2\varphi_R' B' +4 \varphi_R '' (N-1) \big(\frac{\phi''}{\phi} - (\frac{\phi'}{\phi})^2 \big) +2A (\Delta \varphi_R ' + (\Delta \varphi_R)' ) \\
&\qquad + 4(\Delta \varphi_R')' +4\Delta \varphi_R '' +2 \Delta^2 \varphi_R +4 (\Delta \varphi_R)'' \Big)\\
&\quad +u' \Big(-2\varphi_R' C' +2(\Delta \varphi_R)' (N-1) \big(\frac{\phi'}{\phi}\big)'\\
&\qquad +2 \Delta^2 \varphi_R (N-1)\frac{\phi'}{\phi}+2\Delta^2 \varphi_R' +2 (\Delta^2 \varphi_R)'  +2 \Delta ( (\Delta \varphi_R)' ) \Big)\\
&\quad + u \Delta^3 \varphi_R ,
\end{align*}
where $$A= 2(N-1)\frac{\phi^\prime}{\phi},$$
$$B=2(N-1) \left(\frac{\phi''}{\phi}- \big(\frac{\phi'}{\phi}\big)^2 \right)+ (N-1)^2 \left(\frac{\phi^\prime}{\phi}\right)^2 , $$
and
$$C= (N-1) \left(\frac{\phi^{(3)}}{\phi}-3\frac{\phi'' \phi'}{\phi^2}+2 \big(\frac{\phi'}{\phi}\big)^3 \right)+ (N-1)^2 \left( \frac{\phi' \phi''}{\phi^2} - \big(\frac{\phi'}{\phi}\big)^3 \right) .$$
Let
\begin{align*}
F_1 &= -2\varphi_R' A' +4\varphi_R'' A +4 \Delta \varphi_R ' +8 \varphi_R^{(3)} +4 (\Delta \varphi_R)' ,\\
F_2 &= - 2\varphi_R' B' +4 \varphi_R '' (N-1) \big(\frac{\phi''}{\phi} - (\frac{\phi'}{\phi})^2 \big) +2A (\Delta \varphi_R ' + (\Delta \varphi_R)' )\\
&\quad + 4(\Delta \varphi_R')' +4\Delta \varphi_R '' +2 \Delta^2 \varphi_R +4 (\Delta \varphi_R)'' ,\\
\noalign{and}
F_3 &= -2\varphi_R' C' +2(\Delta \varphi_R)' (N-1) \big(\frac{\phi'}{\phi}\big)' +2 \Delta^2 \varphi_R (N-1)\frac{\phi'}{\phi}+2\Delta^2 \varphi_R' \\
&\quad +2 (\Delta^2 \varphi_R)'  +2 \Delta ( (\Delta \varphi_R)' ).
\end{align*}
Integrating by parts, we find that
\begin{align*}
&\int_M u u^{(4)} \varphi_R ''\, dV\\
&= \int_M \varphi_R ''( u'')^2\, dV - 2\int_M |u'|^2 \big((\Delta \varphi_R')' +(N-1)\frac{\phi'}{\phi} \Delta \varphi_R' \big)\, dV+ \int_M u^2 G\, dV,
\end{align*}
where
\begin{align*}
2G&= (\Delta^2 \varphi_R')' + 2(N-1) \frac{\phi'}{\phi}\Delta^2 \varphi_R' + (\Delta \varphi_R')' \big(2(N-1) \big(\frac{\phi'}{\phi}\big)' + (N-1)^2 \big(\frac{\phi'}{\phi}\big)^2 \big)\\
&\quad + \Delta \varphi_R' \big((N-1) \Delta \big(\frac{\phi'}{\phi}\big) +(N-1)^2 \big(\big(\frac{\phi'}{\phi}\big)^2\big)' + (N-1)^3 \big(\frac{\phi'}{\phi}\big)^3 \big).
\end{align*}
Doing the same for the other terms, we get
\begin{align*}
&\int_M u u^{(3)} F_1\, dV
=\frac{3}{2} \int_M (u')^2 \big(F_1 ' + (N-1)\frac{\phi'}{\phi} F_1\big)\,dV - \int_M u^2 G_1\, dV,
\end{align*}
where
\begin{align*}
2G_1&= (\Delta F_1)' + 2(N-1)\frac{\phi'}{\phi} \Delta F_1 +F_1 ' \big(2(N-1) \big(\frac{\phi'}{\phi}\big)' + (N-1)^2 \big(\frac{\phi'}{\phi}\big)^2 \big) \\
&+F_1 \big((N-1) \Delta \big(\frac{\phi'}{\phi}\big) + (N-1)^2  \big(\big(\frac{\phi'}{\phi}\big)^2\big)' + (N-1)^3 \big(\frac{\phi'}{\phi}\big)^3 \big),
\end{align*}
\begin{align*}
\int_M u u''& F_2\, dV
=   -\int_M (u')^2 F_2\, dV \\
& + \frac{1}{2} \int_M u^2 \big(\Delta F_2 +(N-1)\frac{\phi'}{\phi} F_2 ' +(N-1)^2 \big(\frac{\phi'}{\phi}\big)^2 F_2 + (N-1) \big(\frac{\phi'}{\phi}\big)' F_2 \big)\,dV,
\end{align*}
and
\begin{align*}
&\int_M u u' F_3\, dV
=  -  \frac{1}{2}\int_M u^2  \big(F_3 '+ (N-1)\frac{\phi'}{\phi} F_3 \big)\,dV.
\end{align*}
Also, noticing that
$$\int_M |\Delta u|^2\, dV= \int_M |u''|^2\, dV - (N-1) \int_M |u'|^2 \big(\frac{\phi'}{\phi}\big)'\, dV.$$
Thanks to our assumption on $\varphi_R$, we observe that
\begin{align*}
\int_M |\nabla u|^2 &\Big(-F_2 - 16  \big((\Delta \varphi_R')' +(N-1)\frac{\phi'}{\phi} \Delta \varphi_R' \big) +8(N-1)  \big(\frac{\phi'}{\phi}\big)' \\
&\qquad + \frac{3}{2} \big(F_1 ' + (N-1)\frac{\phi'}{\phi} F_1\big)  \Big)\,dV\\
&\leq o_R \big( \|\nabla u\|_{L^2 (M)}^2\big),
\end{align*}
and
\begin{align*}
\int_M |u|^2 &\Big(8G -G_1 +\dfrac{1}{2} \big(\Delta F_2 +(N-1)\frac{\phi'}{\phi} F_2 ' +(N-1)^2 \big(\frac{\phi'}{\phi}\big)^2 F_2\\
& \qquad + (N-1) \big(\frac{\phi'}{\phi}\big)' F_2- F_3 '- (N-1)\frac{\phi'}{\phi} F_3 \big)+\Delta^3 \varphi_R -\beta \Delta^2 \varphi_R\Big)\, dV\\
& \leq o_R (1).
\end{align*}
This implies that
\begin{align}
\label{viriale1}
A_1 (u) +A_2 (u) &\leq 8 \int_M |\Delta u|^2\, dV+4\beta \int_M |\nabla u|^2\, dV+4\beta \int_M |\nabla u|^2 \big(\varphi_R'' -1)\,dV\\
& +o_R \big( 1 + \|\nabla u\|_{L^2 (M)}^2\big).\nonumber
\end{align}
Finally, we consider the term $B(u)$. One can see that
$$B(u)= 2\int_{M} |u|^2 \varphi_R^\prime \big(|u|^{2\sigma}\big)^\prime\, dV =-\frac{2\sigma}{\sigma +1}\int_{M} \Delta \varphi_R |u|^{2\sigma +2}\, dV. $$
Using that $\Delta \varphi_R (r) = \gamma $ if $r\leq R$, we get
\begin{equation}
\label{viriale2}B(u)=-\frac{2\sigma \gamma}{\sigma +1}\int_M |u|^{2\sigma +2}\,dV +\frac{2\sigma}{\sigma +1}\int_{M} |\Delta \varphi_R -\gamma | |u|^{2\sigma +2}\, dV. 
\end{equation}
From $|\Delta \varphi_R -\gamma |\leq C $, using the Strauss inequality and the fact that $r^{N-1}\leq \phi^{N-1}(r)$, for all $r\geq 0$, we find 
 \begin{align*}
\int_{M} |\Delta \varphi -\gamma | |u|^{2\sigma +2}\, dV& \leq C\int_{M\setminus B_R} |u|^{2\sigma +2}\,dV \\
&\leq C \|u\|_{L^2 (M)}^2 R^{-\sigma (N-1)} \|\nabla u\|_{L^2,\text{Eucl}}^{\sigma}\\
& \leq C\|u\|_{L^2 (M)}^2 R^{-\sigma (N-1)} \|\nabla u\|_{L^2 (M)}^{\sigma}. 
\end{align*} 
The result then follows from \eqref{viriale1}, \eqref{viriale2} and the last estimate.
\end{proof}

\begin{thm} 
Let $M=(\R^N,g)$ be a complete rotationally symmetric manifold equipped with the Riemannian metric 
$g =dr^2 +\phi^2(r) d\theta^2$, with $\phi(r)\ge r$. Suppose, furthermore, that there exists the radial function $\varphi_R$ as defined previously for any $R>0$.
Let $N\geq 2$, $\beta \in \R\setminus\{0\}$, $\sigma \leq 4$, $\sigma \gamma >4$ and $\sigma < \dfrac{N}{(N-4)_+}$.  Suppose that $u_0 \in H^2 (M)$ is radial such that
$$E(u_0)<\begin{cases}0,&\mbox{ if}\ \beta > 0,\\ - a(N,\sigma) \beta^2 \|u_0\|_{L^2 (M)}^2,&\mbox{ if}\ \beta<0, \end{cases}$$
for some constant $a(N,\sigma)$ depending on $N$ and $\sigma$. 
Then, the solution $u\in C([0,T);H^2 (M))$ of \eqref{4NLS} blows-up in finite time.
\end{thm}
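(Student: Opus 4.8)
The plan is to run the virial/ODE blow-up scheme of Boulenger--Lenzmann \cite{bou-lenz}, fed by the localized virial inequality of Lemma~\ref{locvirest}. Write $M_R(t):=M_{\varphi_R}\bigl(u(t)\bigr)$ and recall the conservation of mass $\|u(t)\|_{L^2(M)}^2=\|u_0\|_{L^2(M)}^2$ and of energy $E(u(t))=E(u_0)$. By Lemma~\ref{locvirest},
\begin{equation}\label{virineq-plan}
\begin{split}
\frac{d}{dt}M_R(t)&\le 4\sigma\gamma E(u_0)+(8-2\sigma\gamma)\|\Delta u\|_{L^2(M)}^2+\beta(4-2\sigma\gamma)\|\nabla u\|_{L^2(M)}^2\\
&\quad+4\beta\int_M|\nabla u|^2(\varphi_R''-1)\,dV+o_R\bigl(1+\|\nabla u\|_{L^2(M)}^2\bigr)+O\bigl(R^{-\sigma(N-1)}\|\nabla u\|_{L^2(M)}^{\sigma}\bigr).
\end{split}
\end{equation}
Since $\sigma\gamma>4$, the coefficient $8-2\sigma\gamma$ of $\|\Delta u\|_{L^2(M)}^2$ is strictly negative; this is the term that drives the argument, and every other gradient-type term will be absorbed into it. Using the interpolation inequality $\|\nabla u\|_{L^2(M)}^2\le C\|\Delta u\|_{L^2(M)}\|u\|_{L^2(M)}$ together with mass conservation and Young's inequality (the hypothesis $\sigma\le 4$ is exactly what is needed for the last remainder, the borderline case $\sigma=4$ being handled directly from $\|\nabla u\|_{L^2(M)}^4\le C\|\Delta u\|_{L^2(M)}^2\|u_0\|_{L^2(M)}^2$), both error terms $o_R\bigl(\|\nabla u\|_{L^2(M)}^2\bigr)$ and $O\bigl(R^{-\sigma(N-1)}\|\nabla u\|_{L^2(M)}^{\sigma}\bigr)$ are bounded by $\tfrac18|8-2\sigma\gamma|\,\|\Delta u\|_{L^2(M)}^2+\kappa_R$, where $\kappa_R\to0$ as $R\to\infty$; likewise the stray constant $o_R(1)$ is made arbitrarily small by taking $R$ large.

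Next I split into the two cases. If $\beta>0$, then $\beta(4-2\sigma\gamma)<0$, so that term may be dropped, and since $\varphi_R''\le1$ the localization term $4\beta\int_M|\nabla u|^2(\varphi_R''-1)\,dV\le0$ may be dropped as well. Collecting the bounds above and using $E(u_0)<0$, we get, for $R$ large enough,
\begin{equation}\label{keyODE-plan}
\frac{d}{dt}M_R(t)\le -c_1-c_2\|\Delta u(t)\|_{L^2(M)}^2,\qquad c_1,c_2>0 .
\end{equation}
If $\beta<0$, then $\beta(4-2\sigma\gamma)>0$ and, again using $\varphi_R''\le1$, also $4\beta(\varphi_R''-1)\ge0$; by the uniform $C^2$-bounds on $\varphi_R$ these two terms together are at most $C|\beta|\,\|\nabla u\|_{L^2(M)}^2$, which by interpolation and Young is $\le\tfrac18|8-2\sigma\gamma|\,\|\Delta u\|_{L^2(M)}^2+a(N,\sigma)\beta^2\|u_0\|_{L^2(M)}^2$ for a suitable constant $a(N,\sigma)$. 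Thus the hypothesis $E(u_0)<-a(N,\sigma)\beta^2\|u_0\|_{L^2(M)}^2$ again yields \eqref{keyODE-plan}, after absorbing $4\sigma\gamma\,a(N,\sigma)\beta^2\|u_0\|_{L^2(M)}^2$ into the term $4\sigma\gamma E(u_0)$ (and adjusting $a(N,\sigma)$ so that $c_1>0$).

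Finally I run the ODE argument to contradict global existence. Suppose $T_{\max}=+\infty$, so that \eqref{keyODE-plan} holds on $[0,\infty)$. Integrating, $M_R(t)\le M_R(0)-c_1t$, hence $M_R(t_1)<0$ for some finite $t_1$ and $M_R$ stays negative and strictly decreasing thereafter. On the other hand, Cauchy--Schwarz in \eqref{def-vir} together with mass conservation gives
\[
|M_R(t)|\le 2\|\nabla\varphi_R\|_{L^\infty}\|u_0\|_{L^2(M)}\|\nabla u(t)\|_{L^2(M)}\le C\|u_0\|_{L^2(M)}^{3/2}\|\Delta u(t)\|_{L^2(M)}^{1/2},
\]
so that $\|\Delta u(t)\|_{L^2(M)}^2\ge c\,M_R(t)^4\|u_0\|_{L^2(M)}^{-6}$. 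Inserting this into \eqref{keyODE-plan} yields $\frac{d}{dt}M_R(t)\le -c_3\,M_R(t)^4$ for $t\ge t_1$; equivalently $\frac{d}{dt}\bigl(M_R(t)^{-3}\bigr)\ge 3c_3$, so $M_R(t)^{-3}$ reaches $0$ in finite time, i.e.\ $M_R(t)\to-\infty$ as $t\uparrow t^*$ for some $t^*\le t_1+|M_R(t_1)|^{-3}/(3c_3)<\infty$. This contradicts the continuity (hence finiteness) of $M_R$ on $[0,t^*]\subset[0,\infty)$. Therefore $T_{\max}<\infty$, which is the asserted finite-time blow-up (and, by the standard blow-up alternative for $H^2$-solutions, $\|u(t)\|_{H^2(M)}\to\infty$ as $t\to T_{\max}$). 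As customary, the virial identity underlying Lemma~\ref{locvirest} is a priori only formal for $H^2$ data and is justified by a regularization argument, which I would invoke but not reproduce. The main obstacle is the quantitative bookkeeping in the case $\beta<0$: one must balance, against the single favourable term $(8-2\sigma\gamma)\|\Delta u\|_{L^2(M)}^2$, the wrong-sign term $\beta(4-2\sigma\gamma)\|\nabla u\|_{L^2(M)}^2$, the localization error $4\beta\int_M|\nabla u|^2(\varphi_R''-1)\,dV$, and the $o_R$ and $R^{-\sigma(N-1)}$ remainders simultaneously, extracting precisely the constant $a(N,\sigma)$ of the energy threshold --- and this has to be done \emph{before} sending $R\to\infty$, since, unlike in the Euclidean setting, no scaling is available to neutralize the exterior region $\{r\ge R\}$.
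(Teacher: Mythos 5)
Your proposal is correct and follows essentially the same route as the paper: the localized virial inequality of Lemma~\ref{locvirest}, dropping the favourably signed terms when $\beta>0$, absorbing the wrong-sign gradient and localization terms via $\|\nabla u\|_{L^2}^2\le C\|\Delta u\|_{L^2}\|u_0\|_{L^2}$ and Young's inequality when $\beta<0$ (which is exactly how the paper extracts the threshold constant $a(N,\sigma)$), and then the same Cauchy--Schwarz bound $|M_{\varphi_R}(u)|\le C\|u_0\|_{L^2}^{3/2}\|\Delta u\|_{L^2}^{1/2}$ feeding a superlinear ODE for $M_{\varphi_R}$. The only cosmetic difference is that you close the ODE argument by integrating $\frac{d}{dt}(M_R^{-3})\ge 3c_3$ directly, whereas the paper phrases it through the auxiliary quantity $z(t)=\int_{t_1}^t|M_{\varphi_R}(u(s))|^4\,ds$; these are equivalent.
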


\begin{proof}
We first consider the case $\beta> 0$ and $E(u_0)<0$. From the previous Lemma \ref{locvirest}, we have
\begin{align*}
\dfrac{d}{dt}M_{\varphi_R} \big(u(t)\big)& \leq 4 \sigma \gamma E (u_0)+(8-2\sigma \gamma) \int_M |\Delta u|^2\, dV \\
& +o_R \big(1+\|\nabla u\|^2_{L^2(M)}\big)+O\big( R^{-\sigma (N-1)} \|\nabla u\|_{L^2 (M)}^{\sigma}\big). 
\end{align*}
Using that $\|\nabla u(t)\|_{L^2 (M)}\leq C(u_0) \|\Delta u (t)\|_{L^2 (M)}^{1/2} $ and $\sigma \leq 4$, we can choose $R>0$ large enough, such that
\begin{equation}
\label{blowe1}
\dfrac{d}{dt}M_{\varphi_R} (u(t)) \leq  -\delta \int_M |\Delta u|^2\, dV,\ t\in [0,T), 
\end{equation}
for some constant $\delta>0$. Suppose on the contrary  that $T=\infty$. From \eqref{blowe1}, we see that there exists $t_1 >0$ such that $M_{\varphi_R} \big(u(t)\big) \leq 0$ for all $t\geq t_1$. So, integrating \eqref{blowe1} over $[t_1 ,t]$, $t>t_1$, and using Cauchy-Schwartz inequality, we find
$$M_{\varphi_R} \big(u(t)\big) \leq -\delta \int_{t_1}^t \|\Delta u(s)\|_{L^2 (M)}^2\, ds\leq - C(\delta ,R) \int_{t_1}^t \big|M_{\varphi_R} \big(u(s)\big) \big|^4\, ds . $$
Setting 
\[
z(t)= \int_{t_1}^t \big|M_{\varphi_R} \big(u(s)\big) \big|^4\, ds,
\]
 we see that $z^\prime (t) \geq C z(t)^4$. It is easy to see that $z(t)$ has to blow-up in finite time. Therefore, $u(t)$ cannot exist for all $t\geq 0$.

Next, we consider the case $\beta<0$. In this case, we use that 
\[
\|\nabla u\|_{L^2 (M)}^2 \leq \dfrac{1}{2\eta} \|u\|_{L^2 (M)}^2 + \dfrac{\eta}{2} \|\Delta u\|_{L^2 (M)},
\]
 for some $\eta>0$. So, proceeding as above, we get
\begin{align*}
\dfrac{d}{dt}M_{\varphi_R} \big(u(t)\big)& \leq 4 \sigma \gamma E (u_0)+ \dfrac{A^2 \beta^2}{4 (N\sigma -4)} \int_M |u_0|^2\, dV -\delta \int_M |\Delta u|^2\, dV \\
& +o_R (1), 
\end{align*}
where $A=\beta \big(4- 2\sigma \gamma +4 \max_{r\geq 0} |\varphi'' (r) -1|\big) $ and $\delta >0$. So assuming that 
\[
4 \sigma \gamma E (u_0)+ \frac{A^2 \beta^2}{4 (N\sigma -4)} \int_M |u_0|^2 \,dV <0,
\]
 we get that
\begin{equation*}
\dfrac{d}{dt}M_{\varphi_R} \big(u(t)\big) \leq  -\delta \int_M |\Delta u|^2\, dV,\  t\in [0,T). 
\end{equation*}
At this point, we can conclude as previously.
\end{proof}

\end{document}